\documentclass[final,3p,times]{elsarticle}
\usepackage{amssymb}
\usepackage[usenames, dvipsnames]{color}
\usepackage{amsmath,amscd,amssymb,latexsym}
\usepackage{latexsym}
\usepackage{amsthm}
\usepackage{amsmath}
\newtheorem{thm}{Theorem}[section]
\newtheorem{rem}{Remark}[section]
\newtheorem{lem}{Lemma}[section]
\newtheorem{pro}{Proposition}[section]
\newtheorem{prolet}{Proposition}

\newtheorem{cor}{Corollary}[section]
\newtheorem{definition}{Definition}
\allowdisplaybreaks
\numberwithin{equation}{section}

\journal{}
\begin{document}           
	
	\begin{frontmatter}
		\title{The Nehari manifold for indefinite Kirchhoff problem with Caffarelli-Kohn-Nirenberg type critical growth}
	
	\author[Pawan]{Pawan Kumar Mishra}
	\ead{pawanmishra@mat.ufpb.br}
		\author[Pawan]{Jo\~ao Marcos do \'O \corref{mycorrespondingauthor}}
		\ead{jmbo@pq.cnpq.br}
		\cortext[mycorrespondingauthor]{Corresponding author}
		\address[Pawan]{Department of Mathematics, Federal University of  Para\'iba,\\
			Jo\~ao Pessoa, PB, 58051--900, Brazil}
			\author[costa]{David G. costa}
		\ead{costa@unlv.nevada.edu}
			\address[costa]{Department of Mathematical Sciences,
			University of Nevada,\\ Las Vegas  NV,  89154-4020, USA}

		\begin{abstract}
			In this paper we study the following class of nonlocal {problems} involving Caffarelli-Kohn-Nirenberg type critical growth\\
			\begin{align*}
			L(u)&-\lambda h(x)|x|^{-2(1+a)}u=\mu f(x)|u|^{q-2}u+|x|^{-pb}|u|^{p-2}u\;\;  \text{in } \mathbb R^N,
			\end{align*}
			where
			$h(x)\geq 0$, $f(x)$ is a continuous function which may change sign, $\lambda, \mu$ are positive real parameters and  $1<q<2$, $4< p=2N/[N+2(b-a)-2]$, $0\leq a<b<a+1<N/2$, $N\geq 3$. Here
			$$
			L(u)=-M\left(\int_{\mathbb R^N} |x|^{-2a}|\nabla u|^2dx\right)\mathrm {div}(|x|^{-2a}\nabla u)
			$$
			and the function $M:\mathbb R^+\cup \{0\} \to\mathbb R^+$ is exactly as in the Kirchhoff model, given by $M(t)=\alpha+\beta t$, $\alpha, \beta>0$. Using the idea {of the constrained minimization on} Nehari manifold we show the existence of at least two positive solutions for suitable choices of  $\lambda$ and $\mu$.
				\end{abstract}
		
		\begin{keyword}  
		Caffarelli-Kohn-Nirenberg growth \sep Kirchhoff type problem \sep critical exponent \sep Nehari manifold \sep multiplicity. \medskip
		
				\MSC[2010] 35B33 \sep 35J65 \sep  35Q55.
		\end{keyword}
		
	\end{frontmatter}
	
\section{Introduction}
\noindent In this paper, we are concerned with the existence and multiplicity of positive solutions for the following class of nonlocal problem involving Caffarelli-Kohn-Nirenberg type critical growth\\
\begin{equation}\label{Pc}
L(u)-\lambda h(x)|x|^{-2(1+a)}u=\mu f(x)|u|^{q-2}u+|x|^{-pb}|u|^{p-2}u\;\;  \text{in } \mathbb R^N,
\end{equation}
where
$$
L(u)=-M\left(\int_{\mathbb R^N} |x|^{-2a}|\nabla u|^2dx\right)\mathrm {div}(|x|^{-2a}\nabla u),
$$
is a nonlocal operator involving the Kirchhoff term $M:\mathbb R^+\cup \{0\}\rightarrow\mathbb R^+$ modeled as  $M(t)=\alpha+\beta t$ with $\alpha, \beta>0$. In \eqref{Pc}, $1<q<2$, ${4<p}:=2N/(N+2(b-a)-2)$ is the Caffarelli-Kohn-Nirenberg type critical exponent with  $0\leq a<b<a+1<N/2$ and the parameters $\lambda$ and $\mu$ are positive. Moreover,  $h(x)\geq 0$ and  $f(x)$ satisfies the following assumptions:
\begin{itemize}
	\item [\textbf{(F)}]   $f: \mathbb R^N\to \mathbb R$ is a sign changing, continuous function such that  $f^+=\{x\in \mathbb R^N: f(x)>0\}\neq \emptyset$ and 
	$$
	\|f\|_o:=\left(\int_{\mathbb R^N}|x|^\frac{bpq}{p-q}|f(x)|^\frac{p}{p-q} dx\right)^\frac{p-q}{p}<\infty.
	$$

\end{itemize}
Problems of the type \eqref{Pc} are motivated from the interpolation inequalities
proved by Caffarelli, Kohn and Nirenberg in \cite{MR768824}. Those involving a Caffarelli-Kohn-Nirenberg type nonlinearity have been studied by  many authors in the recent past, see \cite{MR2652606, MR3056960, MR3570122, MR3504429, MR3045175, MR3111855, MR2855595}.\\

Problems of the type \eqref{Pc} (in the case of $a=0$) are related to the stationary analogue of the Kirchhoff type quasilinear hyperbolic  equations such as 
\[
u_{tt}-M \left(\int_{\mathbb R^N} |\nabla u|^2 dx \right)=g(x, t, u),
\]
where $M(t)=\alpha+\beta t$, $\alpha, \beta>0$. It was proposed by Kirchhoff \cite{Kirchhoff} as an extension of the classical D’Alembert’s wave equation for free vibrations of an elastic string. This model incorporates the changes in length of the string occurred during the transverse vibrations. We refer to a servey \cite{Arosio} on this topic. This class of problem received much attention only after Lions \cite{LionsK} proposed an abstract framework to the problem. We cite \cite{Alves1, Arosio2, GF2, Naimen} and references therein for more details.

Problem \eqref{Pc} is called nonlocal because of the presence of the Kirchhoff term which means that \eqref{Pc} is no longer a pointwise identity. This phenomenon causes some mathematical difficulties and makes the problem particularly interesting. {For example, the weak limit of minimizing Palais-Smale sequence may not be a weak solution. The presence of Kirchhoff term requires a compactness result in order to make sure this fact. Also the comparison of energy levels of the problem (in different decompositions of Nehari manifold, see Section 2 below for the definitions) with compactness levels involves some non-trivial estimates.} 

In the case $M=1$, authors in \cite{MR3056960, MR2855595} have addressed  a  similar but subcritical quasilinear elliptic problem  in $\mathbb R^N$ and using the idea of Nehari manifold authors succeeded in showing existence of  multiple solutions. {The results in the present  paper can be considered as the extension of the work of \cite{MR3056960, MR2855595} for the problems involving critical growth as well as a Kirchhoff term. Moreoever, the results also can be seen as the extension for a nonlocal Kirchhoff problem with a sublinear perturbation of the work in \cite{MR2652606}, where authors have considered the problem \eqref{Pc} for $M\equiv 1$ and $\mu=0$. }

In the case $f=1$ and $\lambda=0$, authors in \cite{MR3570122} studied the quasilinear situation on bounded domains involving $p$-sublinear and $p$-superlinear terms using the Krasnoselskii genus in a variational framework and,  under some {suitable} assumptions on the parameters the existence of infinitely many solutions was established. Further, in \cite{MR3504429}, authors have complemented those results by studying the $p$-linear situation and showing existence results for any $\mu>0$ even in the $p$-superlinear case.

The range of the parameter $\lambda$ will be determined by the principal eigenvalue of the following eigenvalue problem
\begin{equation}\label{Ec}
-\mathrm {div}(|x|^{-2a}\nabla u)=\lambda h(x)|x|^{-2(1+a)}u\;\;  \text{in } \mathbb R^N\setminus\{0\}.
\end{equation}
In the present paper, we aim to obtain existence of two positive solutions for \eqref{Pc} when $\lambda$ is in a suitable (scaled) neighborhood of the principal eigenvalue of \eqref{Ec} and for sufficient small values of $\mu>0$. We make use of constrained minimization technique combined with the concentration-compactness principle of P.-L. Lions \cite{MR850686} to find the minimizers of the associated energy functional.\\
	{
\noindent The paper is organized as follows. In section 2 we discuss the variational formulation of the problem and state the main result of the paper. In section 3 we introduce the associated Nehari manifold and related  fibering maps. In section 4, we extract Palais-Smale sequences out of Nehari decompositions. Compactness results are studied in Section 5. Section 6 and Section 7 are dedicated to prove  the existence of first and second solutions respectively.}

\section{{The  variational setting}}
For any $r\in [1, \infty)$ and $c\geq 0$, we denote by $L_c^r(\mathbb R^N):=L^r( \mathbb R^N, |x|^{-rc}dx)$ the Banach space of measurable functions on $\mathbb R^N$ whose $r^{th}$ power is Lebesgue integrable with respect to the measure $|x|^{-rc}dx$, endowed with the norm
$$
\|u\|_{L_c^r}:=\left(\int_{\mathbb R^N}|x|^{-rc}|u|^r dx\right)^\frac1r.
$$
The following Caffarelli-Kohn-Nirenberg inequality will be used in what follows:
\begin{equation}\label{CKN}
{S}\left(\int_{\mathbb R^N}|x|^{-pb}|u|^p dx\right)^\frac2p\leq \int_{\mathbb R^N}|x|^{-2a}|\nabla u|^2\,dx\ \quad \text{for all}\;\; u\in D_a^{1, 2}(\mathbb R^N),
\end{equation}
where $D_a^{1, 2}(\mathbb R^N)$ is the completion of $C_c^\infty(\mathbb R^N)$ with respect to the norm
\begin{equation}\label{d12norm}
\|u\|:=\left(\int_{\mathbb R^N}|x|^{-2a}|\nabla u|^2 dx\right)^\frac12\,.
\end{equation}
{and $S$ is the best Sobolev constant of the corresponding continuous embedding of  $D^{1,2}_a(\mathbb R^N)$ into $L^p_b(\mathbb R^N)$.} {Next, we state the following proposition about the eigen value problem \eqref{Ec}. The proof is ommited as it is similar to Proposition 1.1 of \cite{MR2652606} in the case $p=2$.}
{
\begin{prolet}
Suppose $0\not \equiv h\geq 0$ satisfies
\begin{itemize}
\item [\textbf{(H)}] $h\in L^{{N/{p_0}}}_{p_0}(\mathbb R^N)\cap L^{{N/{p_0}}+\theta}_{\rm{loc}}(\mathbb R^N\setminus \{0\})$ for  some $\theta>0$ and $p_0=2-2(b-a)$.
\end{itemize}

 Then the nonlinear eigenvalue problem \eqref{Ec} has a principal eigenvalue $\lambda_1=\lambda_1(h)>0$ which is simple. Moreover, a corresponding eigenfunction $\phi_1$ belongs to the space $D_a^{1, 2}(\mathbb R^N)$ and can be taken to be positive in the sense that $\phi_1>0$ a.e. in $\mathbb R^N\setminus \{0\}$.
\end{prolet}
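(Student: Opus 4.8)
The plan is to realise $\lambda_1$ as the minimum of a weighted Rayleigh quotient and then to extract a positive eigenfunction by a compactness argument exploiting the decay of $h$ forced by \textbf{(H)}. Set
\[
\mathcal{M}:=\Big\{u\in D_a^{1,2}(\mathbb{R}^N):\ \int_{\mathbb{R}^N}h(x)|x|^{-2(1+a)}u^2\,dx=1\Big\},
\qquad
\lambda_1:=\inf_{u\in\mathcal{M}}\|u\|^2 .
\]
First I would check that the quadratic form $J(u):=\int_{\mathbb{R}^N}h|x|^{-2(1+a)}u^2\,dx$ is finite and weakly continuous on $D_a^{1,2}(\mathbb{R}^N)$. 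Writing $h|x|^{-2(1+a)}u^2=(h|x|^{-p_0})\,(|x|^{p_0-2(1+a)}u^2)$ and applying Hölder with exponents $N/p_0$ and $N/(N-p_0)$, the identity $p_0=2-2(b-a)$ makes the second factor reduce exactly to $\big(\int|x|^{-pb}|u|^p\big)^{2/p}$, so that \eqref{CKN} gives
\begin{equation}\label{eqHbound}
\int_{\mathbb{R}^N}h|x|^{-2(1+a)}u^2\,dx\ \le\ S^{-1}\,\|h\|_{L^{N/p_0}_{p_0}}\,\|u\|^2 .
\end{equation}
Since $h\ge 0$, $h\not\equiv0$, there is a bounded $\Omega\subset\subset\mathbb{R}^N\setminus\{0\}$ on which $h>0$ on a set of positive measure; a nonnegative $\varphi\in C_c^\infty(\Omega)$ then has $J(\varphi)\in(0,\infty)$ (the local integrability in \textbf{(H)} guaranteeing finiteness), so after rescaling $\mathcal{M}\neq\emptyset$, and \eqref{eqHbound} yields at once $\lambda_1\ge S\,\|h\|_{L^{N/p_0}_{p_0}}^{-1}>0$.

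Next I would prove that the infimum is attained. Let $\{u_n\}\subset\mathcal{M}$ with $\|u_n\|^2\to\lambda_1$; it is bounded, so $u_n\rightharpoonup u$ in $D_a^{1,2}(\mathbb{R}^N)$ along a subsequence. The step I expect to be the main obstacle is the \emph{compactness} of the embedding $D_a^{1,2}(\mathbb{R}^N)\hookrightarrow L^2(\mathbb{R}^N,h|x|^{-2(1+a)}dx)$, which is precisely what \textbf{(H)} is designed to ensure. Given $\varepsilon>0$, since $h\in L^{N/p_0}_{p_0}(\mathbb{R}^N)$ one picks $0<\rho<R$ with $\|h\|_{L^{N/p_0}_{p_0}(\{|x|<\rho\}\cup\{|x|>R\})}<\varepsilon$; applying \eqref{eqHbound} on that region bounds $\int h|x|^{-2(1+a)}|u_n-u|^2$ there by $C\varepsilon$. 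On the annulus $A:=\{\rho\le|x|\le R\}$ the weight $|x|^{-2a}$ is bounded above and below, so bounded sequences of $D_a^{1,2}(\mathbb{R}^N)$ restrict to bounded sequences of $H^1(A)$; since $0<p_0<2$ we have $N/p_0>N/2$, hence $2(N/p_0+\theta)'<2N/(N-2)$, and the local assumption $h\in L^{N/p_0+\theta}_{\mathrm{loc}}(\mathbb{R}^N\setminus\{0\})$ together with Hölder and the compact Sobolev embedding $H^1(A)\hookrightarrow\hookrightarrow L^{2(N/p_0+\theta)'}(A)$ forces $\int_A h|x|^{-2(1+a)}|u_n-u|^2\to0$. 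Combining, $\int h|x|^{-2(1+a)}|u_n-u|^2\to0$, so $u\in\mathcal{M}$, and lower semicontinuity of the norm gives $\|u\|^2\le\lambda_1$, hence $\|u\|^2=\lambda_1$. A Lagrange-multiplier argument (testing with $\varphi=u$ to identify the multiplier) shows that $u\in D_a^{1,2}(\mathbb{R}^N)$ is a weak solution of \eqref{Ec} with eigenvalue $\lambda_1$; since $\|\,|u|\,\|=\|u\|$ and $J(|u|)=J(u)$, $|u|$ is again a minimizer, so we may take $\phi_1:=|u|\ge0$, and the Harnack inequality / strong maximum principle for $-\mathrm{div}(|x|^{-2a}\nabla\cdot)$ — uniformly elliptic on compact subsets of $\mathbb{R}^N\setminus\{0\}$ — gives $\phi_1>0$ a.e.\ in $\mathbb{R}^N\setminus\{0\}$. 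By the same token every $\lambda_1$-eigenfunction has constant sign.

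Finally, for simplicity I would use a Picone-type identity. If $\phi,\psi\in D_a^{1,2}(\mathbb{R}^N)$ are two positive $\lambda_1$-eigenfunctions, then using $\phi^2/\psi$ as test function in the weak formulation for $\psi$ and $\phi$ itself in that for $\phi$,
\[
0\ \le\ \int_{\mathbb{R}^N}|x|^{-2a}\Big|\nabla\phi-\tfrac{\phi}{\psi}\nabla\psi\Big|^2 dx
=\int_{\mathbb{R}^N}|x|^{-2a}\Big(|\nabla\phi|^2-\nabla\!\big(\tfrac{\phi^2}{\psi}\big)\!\cdot\!\nabla\psi\Big)dx
=\lambda_1 J(\phi)-\lambda_1 J(\phi)=0,
\]
so the nonnegative integrand vanishes a.e., i.e.\ $\nabla(\phi/\psi)=0$ and $\phi/\psi$ is constant; combined with the previous paragraph this shows the $\lambda_1$-eigenspace is one-dimensional. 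The only genuinely delicate points remaining are the elliptic regularity used in the maximum principle and the verification that $\phi^2/\psi$ is an admissible element of $D_a^{1,2}(\mathbb{R}^N)$; the latter is handled in the standard way, replacing $\psi$ by $\psi+\delta$ (or $\min\{\psi,1/\delta\}+\delta$) and letting $\delta\to0^+$, using the local positivity and boundedness of $\phi,\psi$ away from the origin. This is exactly the scheme of Proposition 1.1 of \cite{MR2652606}, specialised to the linear ($p=2$) weighted eigenvalue problem \eqref{Ec}.
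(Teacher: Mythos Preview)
Your proposal is correct and is precisely the approach the paper intends: the paper omits the proof of this proposition entirely, stating only that ``it is similar to Proposition~1.1 of \cite{MR2652606} in the case $p=2$,'' and you have reproduced exactly that scheme, as you yourself note in your final sentence. The key compactness step you outline --- splitting $\mathbb{R}^N$ into $\{|x|<\rho\}$, $\{\rho\le|x|\le R\}$, and $\{|x|>R\}$ and using $\textbf{(H)}$ on each piece --- is in fact carried out verbatim later in the paper as the first convergence of Proposition~\ref{comunbounded}, so your argument dovetails with what the authors actually prove.
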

}
 The principal eigenvalue of \eqref{Ec} is given by
$$
\frac{1}{\lambda_1(h)}\ =\displaystyle \sup_{u\in D_a^{1,2}(\mathbb R^N)}\displaystyle\frac{\int_{\mathbb R^N}h(x)|x|^{-2(1+a)}u^2dx}{\int_{\mathbb R^N} |x|^{-2a}|\nabla u|^2dx}\,,
$$
so that
\begin{equation}\label{EVPm}
\int_{\mathbb R^N} \left(|x|^{-2a}|\nabla u|^2-\lambda h(x)|x|^{-2(1+a)}u^2\right)dx>0
\end{equation}
for every $u\in  D_a^{1,2}(\mathbb R^N)$ and $0<\lambda<\lambda_1(h)$. It can be shown that, for every $0\leq \lambda<\lambda_1(h)$, there exists $\delta(\lambda)>0$ such that
\begin{align}\label{egcm}
\int_{\mathbb R^N} \left(|x|^{-2a}|\nabla u|^2-\lambda h(x)|x|^{-2(1+a)}u^2\right)dx>\delta(\lambda)\int_{\mathbb R^N} |x|^{-2a}|\nabla u|^2dx
\end{align}
for all $u\in  D_a^{1, 2}(\mathbb R^N)$.\\

{
\begin{definition}\label{defws}
	 A function $u\in  D_a^{1, 2}(\mathbb R^N)$ is said to be a weak solution of the problem if, for every $v\in  D_a^{1, 2}(\mathbb R^N)$, the following holds
	\begin{equation*}
	M\left(\|u\|^2\right)\langle u, v\rangle -{\lambda}\displaystyle \int_{\mathbb R^N}h(x)|x|^{-2(1+a)}u vdx-{\mu}\displaystyle\int_{\mathbb R^N}f(x)|u|^{q-2}uvdx-\displaystyle\int_{\mathbb R^N}|x|^{-pb}|u|^{p-2}uvdx=0,
	\end{equation*}
	where $\langle \cdot , \cdot  \rangle$ is a inner product on $D_a^{1, 2}(\mathbb R^N)$ which induces the norm $\|\cdot\|$, defined in \eqref{d12norm}.
\end{definition}
}
In the sense of Definition \ref{defws}, we  state the main result of the paper about the existence of weak solutions as follows:
\begin{thm}\label{22mht1}
	Let $h(x)$, $f(x)$ satisfy $\textbf{(H)}$ and $\textbf{(F)}$ respectively and $M(t)=\alpha+\beta t$. Assume $\lambda \in (0, \alpha \lambda_1(h))>0$, where  $\lambda_1(h)$ is the principle eigenvalue of \eqref{Ec}. Then
	\begin{enumerate}
		\item  there exists $\mu_0>0$ such that problem \eqref{Pc} has at least one positive solution with negative energy for all $\alpha>0$ and $\beta>0$.
		\item there exists $\mu_{00}>0$ such that, for all  $0<\mu<\mu_{00}\leq\mu_0$,  problem \eqref{Pc} has at least two positive solutions for all $\alpha>0$ and  $\beta>0$ sufficiently small.
	\end{enumerate}
\end{thm}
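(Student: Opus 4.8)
The plan is to study the $C^{1}$ energy functional
\[
I_{\lambda,\mu}(u)=\frac{\alpha}{2}\|u\|^{2}+\frac{\beta}{4}\|u\|^{4}-\frac{\lambda}{2}\int_{\mathbb R^N}h|x|^{-2(1+a)}u^{2}dx-\frac{\mu}{q}\int_{\mathbb R^N}f|u|^{q}dx-\frac{1}{p}\int_{\mathbb R^N}|x|^{-pb}|u|^{p}dx,
\]
whose critical points are exactly the weak solutions of \eqref{Pc} in the sense of Definition~\ref{defws}; that $I_{\lambda,\mu}\in C^{1}$ follows from \eqref{CKN} for the critical term, from \textbf{(F)} and H\"older's inequality (giving $\bigl|\int f|u|^{q}\bigr|\le\|f\|_oS^{-q/2}\|u\|^{q}$) for the sublinear term, and from \textbf{(H)} for the linear term. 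On the Nehari manifold $\mathcal N_{\lambda,\mu}=\{u\neq0:\langle I_{\lambda,\mu}'(u),u\rangle=0\}$ I would introduce the fibering maps $\varphi_u(t)=I_{\lambda,\mu}(tu)$, $t>0$. Setting $A(u)=\alpha\|u\|^{2}-\lambda\int h|x|^{-2(1+a)}u^{2}dx$, $B(u)=\beta\|u\|^{4}$, $E(u)=\int|x|^{-pb}|u|^{p}dx$ and $C(u)=\int f|u|^{q}dx$, one has
\[
\varphi_u'(t)=t^{q-1}\bigl(m_u(t)-\mu C(u)\bigr),\qquad m_u(t)=t^{2-q}A(u)+t^{4-q}B(u)-t^{p-q}E(u).
\]
The hypothesis $0<\lambda<\alpha\lambda_1(h)$ and \eqref{egcm} applied with parameter $\lambda/\alpha<\lambda_1(h)$ give $A(u)\ge\delta_0\|u\|^{2}$ with $\delta_0:=\alpha\,\delta(\lambda/\alpha)>0$; since $0<2-q<4-q<p-q$ --- where the hypothesis $p>4$ enters --- the function $m_u$ rises from $0$ to a unique positive maximum and then decreases to $-\infty$. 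Hence each $\varphi_u$ has at most two critical points, and $\mathcal N_{\lambda,\mu}=\mathcal N^{+}\cup\mathcal N^{0}\cup\mathcal N^{-}$ according to the sign of $\varphi_u''$ (equivalently of $m_u'$) at the critical point, with $\mathcal N^{+}$, $\mathcal N^{-}$ consisting respectively of local minima and local maxima of the $\varphi_u$'s.

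The next step is the fibering analysis fixing the admissible range of $\mu$. From the bounds above, $\mu C(u)\le\mu\|f\|_oS^{-q/2}\|u\|^{q}$, while $\max_{t>0}m_u(t)\ge\max_{t>0}\bigl(\delta_0\|u\|^{2}t^{2-q}-S^{-p/2}\|u\|^{p}t^{p-q}\bigr)=c_0\|u\|^{q}$ for an explicit $c_0=c_0(\delta_0,S,p,q)>0$. Thus there is $\mu_0>0$ so that for $0<\mu<\mu_0$ one has $\mu C(u)<\max_t m_u(t)$ for every $u\neq0$; consequently $\mathcal N^{0}=\emptyset$, $\mathcal N_{\lambda,\mu}=\mathcal N^{+}\sqcup\mathcal N^{-}$, each piece is a $C^{1}$ submanifold on which the Lagrange multiplier rule is available, $\mathcal N^{-}$ is closed in $D_a^{1,2}(\mathbb R^N)$ and bounded away from $0$, and $\overline{\mathcal N^{+}}=\mathcal N^{+}\cup\{0\}$. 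Eliminating $E(u)$ by the Nehari identity gives, on $\mathcal N_{\lambda,\mu}$,
\[
I_{\lambda,\mu}(u)=\Bigl(\tfrac12-\tfrac1p\Bigr)A(u)+\Bigl(\tfrac14-\tfrac1p\Bigr)B(u)-\Bigl(\tfrac1q-\tfrac1p\Bigr)\mu C(u)\ \ge\ \Bigl(\tfrac12-\tfrac1p\Bigr)\delta_0\|u\|^{2}+\Bigl(\tfrac14-\tfrac1p\Bigr)\beta\|u\|^{4}-\Bigl(\tfrac1q-\tfrac1p\Bigr)\mu\|f\|_oS^{-q/2}\|u\|^{q},
\]
which, since $q<2<4<p$, shows $I_{\lambda,\mu}$ is coercive and bounded below on $\mathcal N_{\lambda,\mu}$; put $c^{\pm}=\inf_{\mathcal N^{\pm}}I_{\lambda,\mu}$.

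For part (1): choosing $u\in C_c^{\infty}$ supported where $f>0$ gives $C(u)>0$, hence $\varphi_u(t)<0$ for small $t$, and the minimiser $t_1(u)u\in\mathcal N^{+}$ has negative energy, so $c^{+}<0$. Ekeland's principle on $\overline{\mathcal N^{+}}$ yields $\{u_n\}\subset\mathcal N^{+}$ with $I_{\lambda,\mu}(u_n)\to c^{+}$ and $I_{\lambda,\mu}'(u_n)\to0$ (the multipliers vanish because $\mathcal N^{0}=\emptyset$); coercivity gives boundedness, and a short computation using $u_n\in\mathcal N$, compactness of the lower-order embeddings (from \textbf{(H)}, \textbf{(F)}), and $c^{+}<0$ excludes $u_n\rightharpoonup0$, so $u_n\rightharpoonup u_1\neq0$ along a subsequence. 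Since $c^{+}<0<c^{*}$, where $c^{*}=c^{*}(\alpha,\beta,S,N,a,b)$ is the compactness threshold of Section~5, that compactness result upgrades the convergence to $u_n\to u_1$ in $D_a^{1,2}(\mathbb R^N)$; hence $u_1\in\mathcal N^{+}$, $I_{\lambda,\mu}(u_1)=c^{+}<0$, and $u_1$ is a critical point of $I_{\lambda,\mu}$. As $|u_1|$ has the same norm, the same value of every integral entering $I_{\lambda,\mu}$, and lies on the same fibre, $|u_1|$ is again a minimiser, so we may take $u_1\ge0$; a strong maximum principle argument (as in \cite{MR2652606,MR3056960}) gives $u_1>0$ a.e.\ in $\mathbb R^N\setminus\{0\}$. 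This proves (1) for all $\alpha,\beta>0$.

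For part (2) the second solution will be a minimiser of $I_{\lambda,\mu}$ on $\mathcal N^{-}$, and the main obstacle is the loss of compactness of critical Palais--Smale sequences (concentration at the origin, governed by the extremals of \eqref{CKN}), aggravated by the nonlocal term whose quartic homogeneity spoils the naive Brezis--Lieb splitting of $\|u_n\|^{4}$ (it leaves a cross term $2\|u_0\|^{2}\lim\|u_n-u_0\|^{2}$). The plan is: (a) use the compactness result of Section~5 --- bounded Palais--Smale sequences at levels $<c^{*}$ are relatively compact; (b) prove $c^{-}<c^{*}$ for all $\mu<\mu_{00}$, with some $\mu_{00}\in(0,\mu_0]$, and all sufficiently small $\beta>0$: estimate $c^{-}\le\sup_{t>0}\varphi_{w_\varepsilon}(t)=I_{\lambda,\mu}(t_\varepsilon w_\varepsilon)$ for a test function $w_\varepsilon$ built from the extremal $U_\varepsilon$ of \eqref{CKN} concentrating at $0$ (combined, if necessary, with $u_1$), using the known asymptotics of $\|U_\varepsilon\|$, $\int|x|^{-pb}U_\varepsilon^{p}$ and the lower-order integrals; the resulting upper bound lies strictly below $c^{*}$ once $\mu<\mu_{00}$ and $\beta$ is small --- in particular, for $\beta=0$ the strict inequality is classical (cf.\ \cite{MR2652606}) and both $c^{-}$ and $c^{*}$ depend continuously on $\beta$; (c) apply Ekeland's principle on the complete set $\mathcal N^{-}$ to get a minimising Palais--Smale sequence $\{v_n\}\subset\mathcal N^{-}$ at level $c^{-}$, bounded by coercivity, with $v_n\rightharpoonup v_2$; the inequality $c^{-}<c^{*}$ together with (a) forces $v_n\to v_2$ strongly and $v_2\neq0$, so $v_2\in\mathcal N^{-}$ attains $c^{-}$ and is a critical point of $I_{\lambda,\mu}$; replacing $v_2$ by $|v_2|$ and invoking the maximum principle as before gives $v_2>0$ a.e.\ in $\mathbb R^N\setminus\{0\}$. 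Finally $v_2\neq u_1$ because $\mathcal N^{+}\cap\mathcal N^{-}=\emptyset$, which produces the second positive solution and completes the proof. The genuinely hard points are (a) and (b): identifying $c^{*}$ in the presence of the Kirchhoff term and showing $c^{-}$ stays strictly below it, which is where the estimates are delicate and where the smallness of $\beta$ is used.
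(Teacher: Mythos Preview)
Your outline is correct and matches the paper's strategy closely: Nehari manifold, fibering maps, the decomposition $\mathcal N=\mathcal N^{+}\cup\mathcal N^{-}$ with $\mathcal N^{0}=\emptyset$ for small $\mu$, coercivity on $\mathcal N$, Ekeland to produce Palais--Smale sequences, a concentration--compactness threshold $c_{\lambda,\mu}$, and extremal test functions for the strict inequality $c^{-}<c_{\lambda,\mu}$. Two points of execution are handled somewhat differently in the paper and are worth noting.

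\emph{First solution.} The paper minimises $J_{\lambda,\mu}$ on the whole of $\mathcal N_{\lambda,\mu}$ (not on $\overline{\mathcal N^{+}}$), obtains the minimiser $u_0$ by compactness since $\theta_{\lambda,\mu}<0<c_{\lambda,\mu}$, and then \emph{a posteriori} shows $u_0\in\mathcal N^{+}$ by a fibering contradiction: if $u_0\in\mathcal N^{-}$ then $t^{+}(u_0)<1=t^{-}(u_0)$ would produce a point on $\mathcal N$ with strictly lower energy. Your direct minimisation on $\overline{\mathcal N^{+}}=\mathcal N^{+}\cup\{0\}$ also works (since $c^{+}<0$ keeps Ekeland sequences away from $0$), but the paper's route avoids checking that $\mathcal N^{+}\cup\{0\}$ is complete.

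\emph{Energy estimate on $\mathcal N^{-}$.} Here the paper does \emph{not} bound $\sup_{t>0}\varphi_{w_\varepsilon}(t)$ along a ray through the origin. Instead it estimates $J_{\lambda,\mu}(u_0+rU_{\epsilon,\eta})$ uniformly in $r>0$ along the \emph{affine} path based at the first solution $u_0$, and then proves separately (Lemma~\ref{intersectnlam}) that this path crosses $\mathcal N^{-}$, whence $\theta_{\lambda,\mu}^{-}\le J_{\lambda,\mu}(u_0+t_0l_0U_{\epsilon,\eta})<c_{\lambda,\mu}$. The crucial gain comes from the interaction term $\int|x|^{-pb}u_0\,U_{\epsilon,\eta}^{\,p-1}\gtrsim\epsilon^{1/(p-2)}$, which beats the $O(\epsilon^{2/(p-2)})$ errors; this is why combining with $u_0$ is not optional. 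For the Kirchhoff term the paper does not argue by continuity in $\beta$: it sets $\beta=\varepsilon$ and absorbs the resulting $O(\varepsilon)$ contributions into the $O(\epsilon^{2/(p-2)})$ error (here $p>4$), after bounding the maximiser $t_\varepsilon$ of $g(t)$ between two constants independent of $\epsilon$. Your continuity-in-$\beta$ heuristic is plausible (the threshold $c_{\lambda,\mu}$ in the paper is in fact independent of $\beta$), but the paper's explicit coupling $\beta=\varepsilon$ is what actually closes the estimate.
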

The energy functional associated with the problem \eqref{Pc} is 
\begin{align*}
J_{\lambda, \mu}(u)=\frac12 \widehat M\left(\|u\|^2\right)-\frac{\lambda}{2}\displaystyle \int_{\mathbb R^N}h(x)|x|^{-2(1+a)}u^2dx-\frac{\mu}{q}\displaystyle\int_{\mathbb R^N}f(x)|u|^qdx-\frac{1}{p}\displaystyle\int_{\mathbb R^N}|x|^{-pb}|u|^pdx,
\end{align*}
where  $\widehat M(t)=\displaystyle\int_0^tM(s)ds$ {is the primitive of } $M$. { Under the light of assumption $\textbf{(F)}$, inequality \eqref{CKN} and \eqref{EVPm} functional $J_{\lambda, \mu}$ is well defined and  is of class $C^1$ on $D_a^{1,2}(\mathbb R^N)$. It is easy to see that the critical points of the functional $J_{\lambda, \mu}$ corresponds to the weak solution of the problem \eqref{Pc}, in the sense of Definition \ref{defws}.}

\section{{The Nehari manifold and fibering map analysis}}
The energy functional $J_{\lambda,  \mu}$ is not bounded below on $D_a^{1,2}(\mathbb R^N)$. Therefore, in order to study the problem \eqref{Pc} through minimization argument we adopt a well explored idea of constrained minimization in the literature,  popularly known as Nehari minimization technique. The  Nehari set is  defined as follows:
$$
N_{\lambda, \mu}=\left\{u\in D_a^{1,2}(\mathbb R^N)\setminus\{0\}\vert \langle J^\prime_{\lambda, \mu}(u), u\rangle_*=0\right\},
$$
where $\langle\, ,\rangle_*$ is the duality between the dual space of $D_a^{1,2}(\mathbb R^N)$ and $D_a^{1,2}(\mathbb R^N)$. Thus  $u\in N_{\lambda, \mu}$ if and only if
\begin{align*}
M\left(\|u\|^2\right)\|u\|^2-{\lambda}\displaystyle \int_{\mathbb R^N}h(x)|x|^{-2(1+a)}u^2dx-{\mu}\displaystyle\int_{\mathbb R^N}f(x)|u|^qdx-\displaystyle\int_{\mathbb R^N}|x|^{-pb}|u|^pdx=0.
\end{align*}
The fact that the energy functional is bounded below on this Nehari subset of $D_a^{1,2}(\mathbb R^N)$ can be seen in the following lemma.
\begin{lem}\label{cbbi}
The energy functional $J_{\lambda, \mu}$ is coercive and bounded below on  ${N}_{\lambda, \mu}$. 

\end{lem}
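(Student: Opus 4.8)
The plan is to restrict $J_{\lambda,\mu}$ to $N_{\lambda,\mu}$, use the Nehari identity to eliminate the critical term, and reorganize the result into nonnegative powers of $\|u\|$ minus a sublinear term that is controlled by the Caffarelli--Kohn--Nirenberg inequality.

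First I would record that, since $M(t)=\alpha+\beta t$, one has $\widehat M(t)=\alpha t+\tfrac{\beta}{2}t^2$, hence $\tfrac12\widehat M(\|u\|^2)=\tfrac{\alpha}{2}\|u\|^2+\tfrac{\beta}{4}\|u\|^4$, and that for $u\in N_{\lambda,\mu}$
\begin{equation*}
\int_{\mathbb R^N}|x|^{-pb}|u|^p\,dx=\alpha\|u\|^2+\beta\|u\|^4-\lambda\int_{\mathbb R^N}h(x)|x|^{-2(1+a)}u^2\,dx-\mu\int_{\mathbb R^N}f(x)|u|^q\,dx.
\end{equation*}
Substituting this into $J_{\lambda,\mu}(u)$ to eliminate the critical term and collecting terms yields
\begin{equation*}
J_{\lambda,\mu}(u)=\alpha\Bigl(\tfrac12-\tfrac1p\Bigr)\|u\|^2+\beta\Bigl(\tfrac14-\tfrac1p\Bigr)\|u\|^4-\lambda\Bigl(\tfrac12-\tfrac1p\Bigr)\int_{\mathbb R^N}h(x)|x|^{-2(1+a)}u^2\,dx-\mu\Bigl(\tfrac1q-\tfrac1p\Bigr)\int_{\mathbb R^N}f(x)|u|^q\,dx.
\end{equation*}
Since $4<p$ and $1<q<2$, all three factors $\tfrac12-\tfrac1p$, $\tfrac14-\tfrac1p$, $\tfrac1q-\tfrac1p$ are strictly positive; it is precisely the hypothesis $p>4$ that makes the quartic coefficient positive.

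Next I would absorb the indefinite lower-order term. Combining the first and third summands and applying the spectral-gap inequality \eqref{egcm} with $\lambda/\alpha$ in place of $\lambda$ --- which is legitimate because $0\le\lambda<\alpha\lambda_1(h)$ means $\lambda/\alpha<\lambda_1(h)$ --- gives
\begin{equation*}
\alpha\|u\|^2-\lambda\int_{\mathbb R^N}h(x)|x|^{-2(1+a)}u^2\,dx\ \ge\ \alpha\,\delta(\lambda/\alpha)\,\|u\|^2.
\end{equation*}
For the sublinear term, H\"older's inequality with conjugate exponents $p/(p-q)$ and $p/q$, assumption $\textbf{(F)}$, and \eqref{CKN} give
\begin{equation*}
\int_{\mathbb R^N}f(x)|u|^q\,dx\ \le\ \int_{\mathbb R^N}|f(x)||u|^q\,dx\ \le\ \|f\|_o\Bigl(\int_{\mathbb R^N}|x|^{-pb}|u|^p\,dx\Bigr)^{q/p}\ \le\ \|f\|_o\,S^{-q/2}\,\|u\|^q.
\end{equation*}
Dropping the nonnegative quartic term, we reach a bound of the form $J_{\lambda,\mu}(u)\ge C_1\|u\|^2-\mu C_2\|u\|^q$ with constants $C_1,C_2>0$ independent of $u$. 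Because $q<2$, the right-hand side tends to $+\infty$ as $\|u\|\to\infty$, which is coercivity; and since the scalar map $t\mapsto C_1t^2-\mu C_2 t^q$ is continuous on $[0,\infty)$ and coercive, it attains a minimum, so $J_{\lambda,\mu}$ is bounded below on $N_{\lambda,\mu}$.

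The argument is essentially computational, so I do not expect a genuine obstacle; the one point needing care is the handling of the sign-changing term $\int f|u|^q$ and the indefinite term $\lambda\int h|x|^{-2(1+a)}u^2$, which is exactly why the norm $\|f\|_o$ from $\textbf{(F)}$ and the gap inequality \eqref{egcm} (the factor $\alpha$ reflecting $M(0)=\alpha$) are needed. One should also verify that the algebra producing the displayed reorganization of $J_{\lambda,\mu}$ on $N_{\lambda,\mu}$ is correct, but this is routine.
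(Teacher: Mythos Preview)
Your proof is correct and follows essentially the same route as the paper: substitute the Nehari identity to eliminate the critical term, apply the spectral-gap inequality \eqref{egcm} and the H\"older/CKN bound on the sublinear term, and drop the nonnegative quartic term to reach $J_{\lambda,\mu}(u)\ge C_1\|u\|^2-\mu C_2\|u\|^q$. The only cosmetic difference is that the paper goes on to optimize this lower bound explicitly, obtaining $J_{\lambda,\mu}(u)\ge -C\mu^{2/(2-q)}$, whereas you appeal to continuity plus coercivity of the scalar function; your care in writing $\delta(\lambda/\alpha)$ rather than $\delta(\lambda)$ is in fact more precise than the paper's notation.
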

\begin{proof}
For $u\in {N}_{\lambda, \mu},$ we have
\begin{align*}
J_{\lambda, \mu}(u) &= \left(\frac12-\frac{1}{p}\right)\left(\alpha\|u\|^{2}-\lambda\int_{\mathbb R^N}h(x)|x|^{-2(1+a)}|u|^2dx\right)+\left(\frac14-\frac{1}{{p}}\right)\beta \|u\|^4- \mu \left(\frac{1}{q}-\frac{1}{{p}}\right)\int_{\mathbb R^N} f(x)|u|^{q}dx,\\
&\geq \left(\frac12-\frac{1}{{p}}\right)\left(\alpha\|u\|^{2}-\lambda\int_{\mathbb R^N}h(x)|x|^{-2(1+a)}|u|^2dx\right) - \mu \left(\frac{1}{q}-\frac{1}{{p}}\right) S^{\frac{-q}{2}}\|f\|_o\|u\|^q\\
&\geq\left(\frac12-\frac{1}{{p}}\right)\alpha \delta(\lambda)\|u\|^2- \mu \left(\frac{1}{q}-\frac{1}{{p}}\right) S^{\frac{-q}{2}}\|f\|_o\|u\|^q.
\end{align*}
As $q<2$, it is easy to see that $J_{\lambda, \mu}(u)\to +\infty$ as $\|u\|\to \infty$. Hence $J_{\lambda, \mu}$ is coercive. Now  define
$$
G(t) =\left(\frac12-\frac{1}{{p}}\right)\alpha \delta(\lambda)t^{\frac{2}{q}} - \mu \left(\frac{1}{q}-\frac{1}{p}\right) S^{\frac{-q}{2}}\|f\|_ot\,,
$$
then $G(t)$ attains its minimum at
$$
t=\left(\frac{\mu(p-q)\|f\|_o  S^{\frac{-q}{2}}}{(p-2)\alpha \delta(\lambda)}\right)^\frac{q}{2-q}.
$$
Therefore $J_{\lambda, \mu}(u)\geq-C\mu^\frac{2}{2-q}$
for all $u\in {N}_{\lambda, \mu}$, and some constant $C>0$. Hence $J_{\lambda, \mu}$ is bounded from below.
\end{proof}

Now, to study the structure of Nehari set, we define the fibering map $\Phi_u: \mathbb{R}^+\rightarrow \mathbb{R}$, for every fixed $u\in D_a^{1,2}(\mathbb R^N)$,  as $\Phi_u(t)=J_{\lambda, \mu}(tu)$. Now differentiating $\Phi_u(t)$ with respect to $t$ and using $M(t)=\alpha+\beta t$, we get 
\begin{align*}
	\Phi_u^{\prime}(t)&=t\left(\alpha \|u\|^2 -\lambda\int_{\mathbb R^N}h(x)|x|^{-2(1+a)}|u|^2dx\right)+t^3\beta \|u\|^4
	-t^{q-1}\mu\int_{\mathbb R^N}f(x)|u|^q dx-t^{p-1}\int_{\mathbb R^N}|x|^{-pb}|u|^{p} dx
\end{align*}

Therefore, $tu\in  N_{\lambda, \mu}$ if and only if $\Phi_u^{\prime}(t)=0$. In particular, $u\in N_{\lambda, \mu}$ if and only if $t=1$ is a critical point of $\Phi_u(t)$, that is, 
\begin{align}\label{phider}
\alpha\|u\|^2-\lambda\int_{\mathbb R^N}h(x)|x|^{-2(1+a)}|u|^2dx+\beta \|u\|^4
-\mu\int_{\mathbb R^N}f(x)|u|^q dx-\int_{\mathbb R^N}|x|^{-pb}|u|^{p} dx&=0\,.
\end{align}
Moreover,
\begin{align}\label{phidder}
\Phi^{\prime\prime}_u(1)&=\alpha\|u\|^2-\lambda\int_{\mathbb R^N}h(x)|x|^{-2(1+a)}|u|^2dx+3\beta\|u\|^4
-(q-1)\mu\int_{\mathbb R^N}f(x)|u|^q dx-{(p-1)}\int_{\mathbb R^N} |x|^{-pb}|u|^{p} dx\,.
\end{align}
Now, we split $N_{\lambda, \mu}$ into three parts based on the classification of $t=1$ as a  local minima, local maxima and saddle points of $\Phi_u(t)$ as follows:
\begin{equation}
\begin{aligned}\label{Nsubset}
N_{\lambda,\mu}^{+}&=\{u\in N_{\lambda, \mu}\;|\;\Phi^{\prime\prime}_u(1)> 0\}, \\
N_{\lambda, \mu}^{-}&=\{u\in  N_{\lambda, \mu}\;|\;\Phi^{\prime\prime}_u(1)< 0\}, \\
 N_{\lambda,\mu}^0&=\{u\in N_{\lambda,\mu}\;|\;\Phi^{\prime\prime}_u(1)=0\}.
\end{aligned}
\end{equation}
Using the following Lemma together with the implicit function theorem one can show that the Nehari set defined above  is a $C^1$ manifold of co-dimension 1. Let us denote
\begin{equation}\label{muone}
\mu_1={\left(\frac{p-2}{\|f\|_o}\right)\left(\frac{2-q}{p-q}\right)^\frac{2-q}{p-2}
(\alpha \delta(\lambda)S)^\frac{p-q}{p-2}}
\end{equation}
where $S$ is defined in \eqref{CKN} and $\beta$ is in the Kirchhoff term.
\begin{lem}\label{l22.2i}
${N}_{\lambda, \mu}^{0} = \emptyset \;\textrm{for all} \;\mu \in (0, \mu_{1})$ and $\lambda \in (0, \alpha \lambda_1(h))$.
\end{lem}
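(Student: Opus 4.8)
The plan is to argue by contradiction. Suppose that for some $\mu\in(0,\mu_1)$ and $\lambda\in(0,\alpha\lambda_1(h))$ there is $u\in N_{\lambda,\mu}^0$. It is convenient to set
$$
A:=\alpha\|u\|^2-\lambda\int_{\mathbb R^N}h(x)|x|^{-2(1+a)}u^2\,dx,\ \ B:=\beta\|u\|^4,\ \ C:=\mu\int_{\mathbb R^N}f(x)|u|^q\,dx,\ \ D:=\int_{\mathbb R^N}|x|^{-pb}|u|^p\,dx .
$$
By \eqref{egcm} we have $A\ge\alpha\delta(\lambda)\|u\|^2>0$, while $B>0$ and $D>0$ because $u\not\equiv0$. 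Since $u\in N_{\lambda,\mu}$ we have $\Phi_u'(1)=0$, which by \eqref{phider} reads $A+B-C-D=0$; since $u\in N_{\lambda,\mu}^0$ we also have $\Phi_u''(1)=0$, which by \eqref{phidder} reads $A+3B-(q-1)C-(p-1)D=0$. Eliminating $D$ between these two linear relations gives $(p-q)C=(p-2)A+(p-4)B$, and eliminating $C$ gives $(p-q)D=(2-q)A+(4-q)B$.

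The first step is to rule out the possibility $C\le0$: since $p>4$, the identity $(p-q)C=(p-2)A+(p-4)B$ has a strictly positive right-hand side, so necessarily $C>0$. Thus any element of $N_{\lambda,\mu}^0$ satisfies $\int_{\mathbb R^N}f(x)|u|^q\,dx>0$.

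Assuming $C>0$, I would next produce incompatible two-sided bounds on $\|u\|$. Using $A\ge\alpha\delta(\lambda)\|u\|^2$, discarding the nonnegative term $(p-4)B$, and bounding $C\le\mu\|f\|_o S^{-q/2}\|u\|^q$ exactly as in the proof of Lemma \ref{cbbi}, the identity $(p-q)C=(p-2)A+(p-4)B$ yields the upper bound
$$
\|u\|^{2-q}<\frac{(p-q)\,\mu\,\|f\|_o\,S^{-q/2}}{(p-2)\,\alpha\,\delta(\lambda)} .
$$
Using $A\ge\alpha\delta(\lambda)\|u\|^2$ again, discarding $(4-q)B$, and bounding $D\le S^{-p/2}\|u\|^p$ by \eqref{CKN}, the identity $(p-q)D=(2-q)A+(4-q)B$ yields the lower bound
$$
\|u\|^{p-2}\ge\frac{(2-q)\,\alpha\,\delta(\lambda)\,S^{p/2}}{p-q} .
$$
Raising the lower bound to the power $(2-q)/(p-2)$ and comparing it with the upper bound, the exponents of $\alpha\delta(\lambda)$ and of $S$ cancel and what remains is a purely numerical inequality in $\mu$; recalling the definition \eqref{muone} of $\mu_1$, this inequality is incompatible with $\mu<\mu_1$, which is the desired contradiction. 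Hence $N_{\lambda,\mu}^0=\emptyset$ for all $\mu\in(0,\mu_1)$ and $\lambda\in(0,\alpha\lambda_1(h))$.

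The delicate point is the bookkeeping forced by the Kirchhoff term: since $M(t)=\alpha+\beta t$, the fibering map $\Phi_u$ carries an extra cubic term, so the two elimination identities come with the nonstandard coefficients $p-4$ and $4-q$, and the hypothesis $p>4$ is exactly what makes the sign argument in the case $C\le0$ work and what must be tracked carefully in the final comparison against $\mu_1$. An equivalent but more geometric way to run the same estimate is to introduce the reduced fibering function $\psi_u(t)=At^{2-q}+Bt^{4-q}-Dt^{p-q}$, so that $tu\in N_{\lambda,\mu}\iff\psi_u(t)=C$; from the factorization $\psi_u'(t)=t^{1-q}\big((2-q)A+(4-q)Bt^2-(p-q)Dt^{p-2}\big)$ one checks that $\psi_u$ is strictly increasing and then strictly decreasing on $(0,\infty)$, so $u\in N_{\lambda,\mu}^0$ would force $t=1$ to be its unique maximum point with $\psi_u(1)=C$; estimating $\max_{t>0}\psi_u$ from below (comparing with the maximum of $At^{2-q}-Dt^{p-q}$) together with $C\le\mu\|f\|_o S^{-q/2}\|u\|^q$ again contradicts $\mu<\mu_1$.
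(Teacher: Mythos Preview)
Your proof is correct and follows essentially the same route as the paper: both derive the two elimination identities $(p-q)D=(2-q)A+(4-q)B$ and $(p-q)C=(p-2)A+(p-4)B$ (the paper's (2.11)--(2.12)), obtain the lower bound $\|u\|^{p-2}\ge (2-q)\alpha\delta(\lambda)S^{p/2}/(p-q)$ from the first, and feed it into the second together with $C\le\mu\|f\|_oS^{-q/2}\|u\|^q$ to contradict $\mu<\mu_1$. The only cosmetic differences are that the paper treats the case $\int f|u|^q=0$ separately by showing $\Phi_u''(1)<0$ directly (rather than reading $C>0$ off the identity as you do), and it packages the final inequality via the auxiliary functional $E_{\lambda,\mu}(u)=\frac{(p-2)A+(p-4)B}{p-q}-C$ instead of comparing explicit upper and lower bounds on $\|u\|^{2-q}$.
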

\begin{proof}
We consider the following two cases.
\\
\smallskip
\textbf{Case 1:}  In this case we show that if $u\in {N}_{\lambda, \mu}$ such that $\displaystyle \int_{\mathbb R^N} f(x)|u|^{q}dx=0$ then $u\notin N_{\lambda, \mu}^0.$ To prove this fact, we need to show that $\Phi^{\prime\prime}_u(1)\neq 0$. Let us compute $\Phi^{\prime\prime}_u(1)$ using \eqref{phidder} and \eqref{phider} as follows:
\begin{align*}
    \Phi^{\prime\prime}_u(1)&=\alpha\|u\|^2+3\beta\|u\|^4-\lambda\int_{\mathbb R^N}h(x)|x|^{-2(1+a)}|u|^2dx-{(p-1)}\int_{\mathbb R^N} |x|^{-pb}|u|^{p} dx\\
    &= (2-p)\alpha \|u\|^{2} + (4-p)\beta\|u\|^{4}-(2-p)\lambda\displaystyle\int_{\mathbb R^N}h(x)|x|^{-2(1+a)}|u|^2dx\\
    &=(2-p)\left(\alpha \|u\|^{2}-\lambda\displaystyle\int_{\mathbb R^N}h(x)|x|^{-2(1+a)}|u|^2dx\right)+(4-p)\beta\|u\|^{4}<0
\end{align*}
for all $\lambda<\alpha {\lambda_1(h)}$ and $p>4$, which implies $u \notin {N}_{\lambda, \mu}^{0}$\,.
\\
\smallskip
 \textbf{Case 2:} In this case we show that if $u \in {N}_{\lambda, \mu}$ such that  $\displaystyle \int_{\mathbb R^N}f(x) |u|^{q}dx \neq 0$ then $u\notin N_{\lambda, \mu}^0$ if $\lambda \in (0, \alpha \lambda_1(h))$ and $\mu \in (0, \mu_{1})$.  We prove this fact by a contradiction argument. 
Suppose $u \in {N}_{\lambda, \mu}^{0}$, that is, $\Phi^{\prime\prime}_u(1)=0$. Then, from \eqref{phidder}, we have following two conclusions, (eliminating  $\int_{\mathbb R^N} f(x)|u|^q dx$ and $\int_{\mathbb R^N} |x|^{-pb}|u|^p dx$ from \eqref{phidder} using  \eqref{phider}, respectively)
\begin{align}\label{2.11}
    (2-q)\left(\alpha \|u\|^{2}-\lambda\int_{\mathbb R^N}h(x)|x|^{-2(1+a)}|u|^2dx\right)+(4-q)\beta\|u\|^{4} = (p-q)\int_{\mathbb R^N}|x|^{-pb}|u|^{p} dx,
    \end{align}
    \begin{align}\label{2.12}
    (p-2) \left(\alpha \|u\|^{2}-\lambda\int_{\mathbb R^N}h(x)|x|^{-2(1+a)}|u|^2dx\right)
    +(p-4)\beta\|u\|^4 = (p-q)\mu \displaystyle \int_{\mathbb R^N}f(x) |u|^{q}dx.
\end{align}
Now, define $E_{\lambda, \mu}: N_{\lambda, \mu}\rightarrow \mathbb R$ as
\begin{equation*}
    E_{\lambda, \mu}(u) = \frac{(p-2) \left(\alpha \|u\|^{2}-\lambda\int_{\mathbb R^N}h(x)|x|^{-2(1+a)}|u|^2dx\right)+(p-4)\beta \|u\|^4}{p-q} - \mu \displaystyle \int_{\mathbb R^N}f(x) |u|^{q}dx,
\end{equation*}
so that, from equation \eqref{2.12}, $E_{\lambda, \mu}(u) = 0$ for all  $u \in \mathcal{N}_{\lambda, \mu}^{0}.$\\
And, for $0<\lambda<\alpha \lambda_1(h)$,
\begin{align*}
  E_{\lambda, \mu}(u)& \geq  {\left(\frac{p-2}{p-q}\right)\alpha\delta(\lambda)\|u\|^2 }- \mu \|f\|_o S^{-q/2}\|u\|^q,\\
  & \geq  \|u\|^q\left({\left(\frac{{p}-2}{p-q}\right)\alpha\delta(\lambda)\|u\|^{2-q}} - \mu \|f\|_o S^{-q/2}\right).\\
\end{align*}

Next, from equation \eqref{2.11} and \eqref{CKN} with $0<\lambda<\alpha \lambda_1(h)$, we get
\begin{equation}\label{2.13}
  \|u\| \geq{ \left(\frac{\alpha\delta(\lambda)(2-q) S^{p/2}}{p-q}\right)^{\frac{1}{p-2}}.}
\end{equation}
Then, using equation \eqref{2.13} and $\mu\in (0, \mu_1)$ gives $E_{\lambda, \mu}(u)>0$ for all $u \in {N}_{\lambda, \mu}^{0},$ which is a contradiction.
\end{proof}
In every constrained minimization approach the ultimate goal is to show that the minimizer (or a critical point) of the functional obtained under the applied constraint is actually a minimizer (or a critical point) of the functional. The following Lemma shows precisely this fact in context of the Nehari manifold.
\begin{lem}
Let $u$ be a local minimizer for $J_{\lambda, \mu}$ in any of the subsets of $N_{\lambda, \mu}$ defined in \eqref{Nsubset}. If $u\notin N_{\lambda, \mu}^{0}$ then  $u$ is a critical point of $J_{\lambda, \mu}$.
\end{lem}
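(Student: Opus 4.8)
The plan is to invoke the Lagrange multiplier rule for the constraint defining $N_{\lambda,\mu}$. Set $\psi_{\lambda,\mu}(u)=\langle J'_{\lambda,\mu}(u),u\rangle_*$, so that $N_{\lambda,\mu}=\{u\in D_a^{1,2}(\mathbb R^N)\setminus\{0\}:\psi_{\lambda,\mu}(u)=0\}$, and note that $\psi_{\lambda,\mu}$ is $C^1$ because $J_{\lambda,\mu}$ is $C^1$ on $D_a^{1,2}(\mathbb R^N)$. The first step is to record the elementary identity coming from $\psi_{\lambda,\mu}(tu)=t\,\Phi_u'(t)$ (since $\Phi_u'(t)=\langle J'_{\lambda,\mu}(tu),u\rangle_*$): differentiating in $t$ at $t=1$ gives $\langle\psi'_{\lambda,\mu}(u),u\rangle_*=\Phi_u'(1)+\Phi^{\prime\prime}_u(1)$, which for $u\in N_{\lambda,\mu}$ reduces to $\langle\psi'_{\lambda,\mu}(u),u\rangle_*=\Phi^{\prime\prime}_u(1)$ because $\Phi_u'(1)=0$ by \eqref{phider}.

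Next I would use the hypothesis $u\notin N_{\lambda,\mu}^{0}$, i.e. $\Phi^{\prime\prime}_u(1)\neq 0$. By the identity just obtained, $\langle\psi'_{\lambda,\mu}(u),u\rangle_*\neq 0$, so in particular $\psi'_{\lambda,\mu}(u)\neq 0$ in the dual of $D_a^{1,2}(\mathbb R^N)$; hence $N_{\lambda,\mu}$ is a $C^1$ manifold of codimension one in a neighborhood of $u$. Moreover, since $v\mapsto\Phi^{\prime\prime}_v(1)$ is continuous on $D_a^{1,2}(\mathbb R^N)$, the sets $N_{\lambda,\mu}^{+}$ and $N_{\lambda,\mu}^{-}$ are relatively open in $N_{\lambda,\mu}$ (the case of $N_{\lambda,\mu}^{0}$ is vacuous here, since it would force $u\in N_{\lambda,\mu}^{0}$). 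Consequently a local minimizer of $J_{\lambda,\mu}$ in any of these subsets is a local minimizer of the restriction $J_{\lambda,\mu}|_{N_{\lambda,\mu}}$ near $u$.

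The Lagrange multiplier theorem then yields $\theta\in\mathbb R$ with $J'_{\lambda,\mu}(u)=\theta\,\psi'_{\lambda,\mu}(u)$ in the dual space. Pairing this identity with $u$ and using $u\in N_{\lambda,\mu}$ gives $0=\langle J'_{\lambda,\mu}(u),u\rangle_*=\theta\,\langle\psi'_{\lambda,\mu}(u),u\rangle_*=\theta\,\Phi^{\prime\prime}_u(1)$. Since $\Phi^{\prime\prime}_u(1)\neq 0$, we must have $\theta=0$, so $J'_{\lambda,\mu}(u)=0$, which is exactly the assertion that $u$ is a critical point of $J_{\lambda,\mu}$ and hence, by the remark after Definition \ref{defws}, a weak solution of \eqref{Pc}.

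The only delicate point is the verification that the constraint is nondegenerate at $u$ (so that the multiplier rule is applicable and the subsets $N_{\lambda,\mu}^{\pm}$ behave well), but this is precisely where the hypothesis $u\notin N_{\lambda,\mu}^{0}$ enters: it guarantees $\langle\psi'_{\lambda,\mu}(u),u\rangle_*=\Phi^{\prime\prime}_u(1)\neq 0$, which simultaneously makes $N_{\lambda,\mu}$ a manifold near $u$ and forces the multiplier to vanish. Everything else is routine bookkeeping with the $C^1$ structure of $J_{\lambda,\mu}$.
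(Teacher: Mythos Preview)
Your argument is correct and follows essentially the same route as the paper: define the constraint functional $\psi_{\lambda,\mu}(u)=\langle J'_{\lambda,\mu}(u),u\rangle_*$, apply the Lagrange multiplier rule, test the resulting identity against $u$, and use $\langle\psi'_{\lambda,\mu}(u),u\rangle_*=\Phi^{\prime\prime}_u(1)\neq 0$ to force the multiplier to vanish. Your version is slightly more careful in justifying the nondegeneracy of the constraint and the relative openness of $N_{\lambda,\mu}^{\pm}$ in $N_{\lambda,\mu}$, points the paper passes over in silence, but the core idea is identical.
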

\begin{proof}
Let $u$ be a local minimizer for $J_{\lambda, \mu}$ in any of the
subsets of $N_{\lambda, \mu}$ defined in \eqref{Nsubset}. Then, in any case $u$ is a
minimizer for $J_{\lambda, \mu}$ under the constraint $I_{\lambda, \mu}(u):=\langle
J_{\lambda, \mu}^{\prime}(u),u\rangle_* =0$. Hence, by the theory of Lagrange
multipliers, there exists $\eta \in \mathbb R$ such that $ J_{\lambda, \mu}^{\prime}(u)= \eta I_{\lambda, \mu}^{\prime}(u)$. Thus $0=\langle
J_{\lambda, \mu}^{\prime}(u),u\rangle_*= \eta\;\langle I_{\lambda, \mu}^{\prime}(u),u\rangle_* = \eta
\Phi_{u}^{\prime\prime}(1)$. Since $\Phi_{u}^{\prime\prime}(1) \ne 0$ as $u\notin N_{\lambda, \mu}^{0}$,  implies $\eta=0$ which proves the Lemma.
\end{proof}
Next, we denote
\begin{equation*}
\begin{aligned}
F^+ &=\left\{u\in  D_a^{1,2}(\mathbb R^N): \displaystyle \int_{\mathbb R^N} f(x)|u|^qdx> 0\right\}\,,\\
F^- &=\left\{u\in  D_a^{1,2}(\mathbb R^N): \displaystyle \int_{\mathbb R^N} f(x)|u|^qdx\leq 0\right\}.
\end{aligned}
\end{equation*}
\noindent As we know that the decompositions $N_{\lambda, \mu}^\pm$ are characterised through the critical points of fibering maps being local maxima or local minima, it is gainful to study the behavior of these maps.  We discuss the behaviour of these maps according to the sign changing behavior of the integral $\displaystyle \int_{\mathbb R^N} f(x)|u|^qdx$ in the following cases. {First we denote
\begin{equation}\label{mutwo}
\mu_2=\left(\frac{p-q-1}{\|f\|_o}\right)\left(\frac{(2-q)\alpha\delta(\lambda)S}{p-q}\right)^\frac{p-q}{p-2}
\end{equation}}
\medskip
\textbf{Case 1: $u\in F^+$}.\\
Define $\psi_u:\mathbb{R}^+\rightarrow \mathbb{R}$ ( by seperating the sublinear term from the equation $\Phi_u^\prime(t)=0$) as
\begin{align}\label{phidef}
\psi_u(t)= t^{2-q}\left(\alpha\|u\|^2-\lambda\int_{\mathbb R^N}h(x)|x|^{-2(1+a)}|u|^2dx\right)+\beta t^{4-q}\|u\|^{4}-t^{p-q}\int_{\mathbb R^N} |x|^{-pb}|u|^{p} dx\,
\end{align}
and observe that $tu\in N_{\lambda, \mu}$ if and only if
\[
\psi_u(t)= \mu \displaystyle \int_{\mathbb R^N} f(x)|u|^qdx\,.
\]
From \eqref{phidef}, we have
\begin{align}\label{siutd}
\psi_u^{\prime}(t)=(2-q)t^{1-q}&\left(\alpha\|u\|^2-\lambda\int_{\mathbb R^N}h(x)|x|^{-2(1+a)}|u|^2dx\right)
+ (4-q)\beta t^{3-q}\|u\|^{4}-(p-q)t^{p-q-1}\int_{\mathbb R^N} |x|^{-pb}|u|^{p} dx\,,
\end{align}
and from \eqref{siutd} it is easy to see that $\psi_{u}^\prime (t) \rightarrow - \infty$ as $t \rightarrow \infty$  and $\lim_{t\rightarrow 0^{+}}\psi'_{u}(t)>0$. Moreover from $\lim_{t\rightarrow \infty} \psi_{u}(t)=-\infty$ and $\psi_u(0)=0$. {Furthermore, $\psi^{\prime\prime}(t)\geq 0$ implies that $\psi^\prime (t)\leq 0$ which implies  the existence of a unique} $ t_{\max}= t_{\max}(u)>0$ such that $ \psi_{u}'(t_{\max})=0$ and $\psi_u(t)$ is increasing on
$(0, t_{\max})$ and decreasing on $(t_{\max}, \infty)$. {On estimating \eqref{siutd}  at $t=t_{\max}$, for $\lambda\in (0, \alpha \lambda_1(h))$, we get
\begin{align*}
    (p-q)t^{p-q-1}_{\max}\int_{\mathbb R^N} |x|^{-pb}|u|^{p}dx&=(2-q)t_{\max}^{1-q}\left(\alpha\|u\|^{2}-\lambda\int_{\mathbb R^N}h(x)|x|^{-2(1+a)}|u|^2dx\right)+ (4-q)\beta t_{\max}^{3-q}\|u\|^{4}\\
    &\geq (2-q)t_{\max}^{1-q}\alpha \delta(\lambda)\|u\|^2.
\end{align*}
Now using \eqref{CKN}, we get 
\begin{align*}
(p-q)t^{p-q-1}_{\max}S^\frac{-p}{2}\|u\|^p\geq  (p-q)t^{p-q-1}_{\max}\int_{\mathbb R^N} |x|^{-pb}|u|^{p}dx\geq (2-q)t_{\max}^{1-q}\alpha \delta(\lambda)\|u\|^2
\end{align*}
which implies
\begin{equation}\label{eneq}
t_{\max} \geq \frac{1}{\|u\|} \left(\frac{(2-q)\alpha\delta(\lambda) S^{p/2}}{(p-q)}\right)^{\frac{1}{p-2}}:=T_1>0\,.
\end{equation}
Using the inequality {\eqref{eneq}} together with the fact that $\psi_u(t)$ is increasing in $(0, t_{\max})$, we have
\begin{align*}
    \psi_{u}(t_{\max}) & \geq \psi_u(T_1)\geq (2-q)\alpha \delta(\lambda)T_1^{2-q}\|u\|^2-T_1^{p-q}\int_{\mathbb R^N} |x|^{-pb}|u|^{p}dx \\
    &\geq (2-q)\alpha \delta(\lambda)T_1^{2-q}\|u\|^2-T_1^{p-q}S^\frac{-p}{2}\|u\|^p\\
    & \ge   (p-q-1)S^\frac{-p}{2}\|u\|^q \left(\frac{(2-q)\alpha\delta(\lambda)S^\frac{p}{2}}{p-q}\right)^\frac{p-q}{p-2}>0.
    \end{align*}
}

Then, if $\mu<\mu_2$, there exists unique $t^+ = t^+(u) < t_{\max}$ and $t^- = t^-(u) > t_{\max}$ such that
\[
\psi_{u}(t^+) = \mu \displaystyle \int_{\mathbb R^N}{f(x)|u|^{q}}dx = \psi_{u}(t^-),
\]
hence  $t^+u, t^-u \in N_{\lambda, \mu}.$ Also $\psi'_{u}(t^+) > 0$ and $\psi_{u}'(t^-) < 0$.  Now, using the relation $\Phi_{tu}^{\prime\prime}(1)=t^{1+q}\psi_u^{\prime}(t)$  for $tu\in N_{\lambda, \mu}$, obtained from \eqref{phidder} and \eqref{siutd},
we get $\Phi_{t^+u}^{\prime\prime}(1)>0$ and $\Phi_{t^-u}^{\prime\prime}(1)<0$, which implies $t^+u \in {N}^{+}_{\lambda, \mu}$ and $t^-u \in {N}^{-}_{\lambda, \mu}.$
\\
\medskip
 \textbf{Case 2: $u\in F^-$}.\\
Since $\psi_{u}(t) \rightarrow -\infty$ as $t \rightarrow \infty$,  for every $u\in F^-$ and for all $\mu>0$, there exists $t^*>t_{\max}$ such that
\[
\psi_{u}(t^*) = \mu \displaystyle \int_{\mathbb R^N}{f(x)|u|^{q}}dx\,,
\]
which implies $t^*u\in {N}_{\lambda, \mu}$. Moreover, as $\psi_u^\prime(t)<0$ for $t>t_{\max}$, we get  $\psi_u^\prime(t^*)<0$. Therefore, using again the relation $\Phi_{tu}^{\prime\prime}(1)=t^{q+1}\psi_u^{\prime}(t)$, we get $\Phi_{t^*u}^{\prime\prime}(1)<0$. Hence
$t^*u\in N_{\lambda, \mu}^-$.
\\
\medskip

Next, let us define
\begin{equation*}
\begin{aligned}
\theta _{\lambda, \mu}&=\inf\{ J_{\lambda, \mu}(u): u\in N_{\lambda, \mu}\},\\
\theta _{\lambda, \mu}^+&=\inf\{ J_{\lambda, \mu}(u): u\in N_{\lambda, \mu}^+\},\\
 \theta _{\lambda, \mu}^-&=\inf\{ J_{\lambda, \mu}(u): u\in  N_{\lambda, \mu}^-\}.
\end{aligned}
\end{equation*}
Then we have the following Lemma:
\begin{lem}\label{3a}
{For $\lambda\in (0, \alpha \lambda_1(h))$  we have $\theta_{\lambda, \mu}\leq \theta_{\lambda, \mu}^+ <0.$}
\end{lem}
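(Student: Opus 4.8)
The plan is to establish the two inequalities separately. The inequality $\theta_{\lambda,\mu}\le\theta_{\lambda,\mu}^{+}$ is purely formal once we know $N_{\lambda,\mu}^{+}\neq\emptyset$: since $N_{\lambda,\mu}^{+}\subseteq N_{\lambda,\mu}$, the infimum of $J_{\lambda,\mu}$ over the larger set is no larger than the infimum over the smaller one. Non-emptiness of $N_{\lambda,\mu}^{+}$ is supplied by the fibering analysis of Case~1 above: by \textbf{(F)} the set $f^{+}$ is non-empty, so fixing any $u_{0}\in D_a^{1,2}(\mathbb R^N)\setminus\{0\}$ with compact support contained in $f^{+}$ and not containing the origin we get $\int_{\mathbb R^N}f(x)|u_{0}|^{q}\,dx>0$, i.e. $u_{0}\in F^{+}$, and then for $\mu<\mu_{2}$ the point $t^{+}(u_{0})u_{0}$ belongs to $N_{\lambda,\mu}^{+}$.

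The substance of the lemma is thus the strict inequality $\theta_{\lambda,\mu}^{+}<0$, and I would obtain it from the stronger pointwise statement that $J_{\lambda,\mu}(u)<0$ for \emph{every} $u\in N_{\lambda,\mu}^{+}$; granting this, and since $N_{\lambda,\mu}^{+}\neq\emptyset$, an infimum is bounded above by the value at any point of the set, whence $\theta_{\lambda,\mu}^{+}\le J_{\lambda,\mu}(u_{0})<0$. To prove the pointwise bound, abbreviate, for $u\in N_{\lambda,\mu}$,
\[
A:=\alpha\|u\|^{2}-\lambda\!\int_{\mathbb R^N} h(x)|x|^{-2(1+a)}u^{2}\,dx,\qquad B:=\beta\|u\|^{4},\qquad D:=\int_{\mathbb R^N}|x|^{-pb}|u|^{p}\,dx,
\]
noting $A>0$ by \eqref{egcm} (this is where $\lambda<\alpha\lambda_{1}(h)$ enters), $B>0$ and $D>0$. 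Eliminating $\mu\int_{\mathbb R^N}f(x)|u|^{q}\,dx$ from the energy by means of the membership relation \eqref{phider} gives
\[
J_{\lambda,\mu}(u)=\Big(\tfrac12-\tfrac1q\Big)A+\Big(\tfrac14-\tfrac1q\Big)B+\Big(\tfrac1q-\tfrac1p\Big)D,
\]
while eliminating the same integral from $\Phi_u''(1)$ in \eqref{phidder} (again via \eqref{phider}) rewrites the defining condition $\Phi_u''(1)>0$ of $N_{\lambda,\mu}^{+}$ as $(2-q)A+(4-q)B>(p-q)D$, i.e. as an upper bound on $D$.

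Substituting this bound into the formula for $J_{\lambda,\mu}(u)$ — legitimate since $\tfrac1q-\tfrac1p>0$ — and collecting the coefficients of $A$ and $B$ yields
\[
J_{\lambda,\mu}(u)<\frac{(p-2)(q-2)}{2pq}\,A+\frac{(p-4)(q-4)}{4pq}\,B .
\]
This is precisely the step where the structural hypotheses $1<q<2$ and $p>4$ are essential: they make $(p-2)(q-2)<0$ and $(p-4)(q-4)<0$, so the right-hand side is a negative multiple of the positive quantity $A$ plus a negative multiple of the positive quantity $B$, hence $J_{\lambda,\mu}(u)<0$. Together with the first inequality this proves the lemma.

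The computations themselves are routine arithmetic; the only delicate point is the sign bookkeeping in the last display, which is where the Kirchhoff term (the $B$-contributions) forces the restriction $p>4$ rather than the usual $p>2$. A slightly more economical route to $\theta_{\lambda,\mu}^{+}<0$, avoiding the pointwise claim, is to argue directly along the fibre through the $u_{0}\in F^{+}$ chosen above: $\psi_{u_{0}}$ is strictly increasing on $(0,t_{\max})$ with $\psi_{u_{0}}(t^{+}(u_{0}))=\mu\int_{\mathbb R^N}f(x)|u_{0}|^{q}\,dx$, so $\Phi_{u_{0}}'(t)=t^{q-1}\big(\psi_{u_{0}}(t)-\mu\int_{\mathbb R^N}f(x)|u_{0}|^{q}\,dx\big)<0$ on $(0,t^{+}(u_{0}))$; hence $J_{\lambda,\mu}(t^{+}(u_{0})u_{0})=\Phi_{u_{0}}(t^{+}(u_{0}))<\Phi_{u_{0}}(0)=0$, and therefore $\theta_{\lambda,\mu}^{+}\le J_{\lambda,\mu}(t^{+}(u_{0})u_{0})<0$.
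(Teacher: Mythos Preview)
Your proof is correct and follows essentially the same route as the paper: both eliminate $\mu\int f|u|^{q}$ from $J_{\lambda,\mu}$ via \eqref{phider}, then use the $N_{\lambda,\mu}^{+}$ condition (rewritten via \eqref{phider} as an upper bound on $D$) to obtain the same negative combination of $A$ and $B$. Your presentation is in fact slightly sharper in that you state the pointwise claim for every $u\in N_{\lambda,\mu}^{+}$ and make explicit the need for $\mu<\mu_{2}$ to guarantee $N_{\lambda,\mu}^{+}\neq\emptyset$; the alternative fibering argument you sketch at the end is a nice bonus not present in the paper.
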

\begin{proof}
Let $u\in D^{1,2}_a(\mathbb R^N)$. {Since $f^+\neq \emptyset$,} it can be assumed  without loss of generality that $u\in F^+$. Then, as discussed in $\textbf {Case 1}$ above, there exists a unique $t^+>0$ such that {$t^+u\in N_{\lambda, \mu}\cap N_{\lambda, \mu}^+$}. Denoting $t^+u=v$, we have from \eqref{phider}
\begin{equation}\label{pglad}
{\mu}\int_{\mathbb R^N}f(x)|v|^q dx=\left(\alpha\|v\|^2-\lambda\int_{\mathbb R^N}h(x)|x|^{-2(1+a)}|v|^2dx\right)+{\beta}\|v\|^4-\int_{\mathbb R^N}|x|^{-pb}|v|^{p} dx\,.
\end{equation}
Now, eliminating the term $\displaystyle\int_{\mathbb R^N}f(x)|v|^q dx$ from $J_{\lambda, \mu}$ by using \eqref{pglad}, we get

\begin{equation}\label{jstar}
\begin{aligned}
 J_{\lambda, \mu}(v) &= \left(\frac{1}{2}-\frac{1}{q}\right)\left(\alpha \|v\|^{2} -\lambda\int_{\mathbb R^N}h(x)|x|^{-2(1+a)}|v|^2dx\right)+ \left(\frac{1}{4}-\frac{1}{q}\right)\beta\| v\|^{4}+ \left(\frac{1}{q}-\frac{1}{p}\right) \int_{\mathbb R^N}|x|^{-pb}|v|^{p} dx.
\end{aligned}
\end{equation}
Moreover, as $v\in N_{\lambda, \mu}^+$, from the definition in \eqref{Nsubset} we get
\[
{(p-1)}\int_{\mathbb R^N} |x|^{-pb}|v|^{p} dx<\alpha\|v\|^2-\lambda\int_{\mathbb R^N}h(x)|x|^{-2(1+a)}|v|^2dx
+3\beta\|v\|^4-(q-1)\mu\int_{\mathbb R^N}f(x)|v|^q dx\,.
\]
Next, using again \eqref{pglad} to eliminate the term $\displaystyle\int_{\mathbb R^N}f(x)|v|^q dx$ in the above inequality , we get
\begin{equation*}
  \int_{\mathbb R^N}|x|^{-pb}|v|^{p} dx \leq \left(\frac{2-q}{p-q}\right)\left(\alpha\|v\|^{2}-\lambda\int_{\mathbb R^N}h(x)|x|^{-2(1+a)}|v|^2dx\right)+ \left(\frac{4-q}{p-q}\right)\beta\|v\|^{4}\,.
\end{equation*}
{Substituting the above inequality in \eqref{jstar}, we have }
\begin{align*}
J_{\lambda, \mu}(v) &\leq -\frac{(2-q)(p-2)}{2pq} \left(\alpha\|v\|^{2}-\lambda\int_{\mathbb R^N}h(x)|x|^{-2(1+a)}|v|^2dx\right)-\frac{(4-q)(p-4)}{4pq} \beta \|v\|^{4}\,,\\
&\leq-\frac{(2-q)(p-2)}{2pq} \alpha \delta(\lambda)\mathrm{C}\,,
\end{align*}
where $\mathrm{C}=\|v\|^2\,.$ This implies $ \theta_{\lambda, \mu}^+<0$.
\end{proof}
{The following Lemma helps in showing that the set  $N_{\lambda,\mu}^-$ is closed in the $D^{1,2}_a(\mathbb R^N)$ topology.}
\begin{lem}\label{nlwi}
There exists $\rho>0$ such that $\|u\|\geq \rho$ for all $u\in N_{\lambda,\mu}^-$.
\end{lem}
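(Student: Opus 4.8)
The plan is to play the two defining relations of $N_{\lambda,\mu}^-$ against each other, exactly as was done to obtain \eqref{2.11}: membership $u\in N_{\lambda,\mu}$ gives the identity \eqref{phider}, while $u\in N_{\lambda,\mu}^-$ gives the strict inequality $\Phi_u''(1)<0$ with $\Phi_u''(1)$ as in \eqref{phidder}. I would use \eqref{phider} to solve for the sublinear term,
\[
\mu\int_{\mathbb R^N}f(x)|u|^q\,dx=\left(\alpha\|u\|^2-\lambda\int_{\mathbb R^N}h(x)|x|^{-2(1+a)}|u|^2\,dx\right)+\beta\|u\|^4-\int_{\mathbb R^N}|x|^{-pb}|u|^p\,dx,
\]
and substitute this into $\Phi_u''(1)<0$. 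After collecting terms the $f$-integral drops out completely and one is left with
\[
(2-q)\left(\alpha\|u\|^2-\lambda\int_{\mathbb R^N}h(x)|x|^{-2(1+a)}|u|^2\,dx\right)+(4-q)\beta\|u\|^4<(p-q)\int_{\mathbb R^N}|x|^{-pb}|u|^p\,dx .
\]

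From here the estimate is routine. Since $1<q<2<4$, the term $(4-q)\beta\|u\|^4$ is nonnegative and may be discarded; the bracket on the left is bounded below by $(2-q)\alpha\delta(\lambda)\|u\|^2$ via the coercivity inequality \eqref{egcm} (valid because $\lambda<\alpha\lambda_1(h)$, so $\delta(\lambda)>0$), exactly as in the proof of Lemma \ref{cbbi}; and the right-hand side is bounded above by $(p-q)S^{-p/2}\|u\|^p$ by the Caffarelli--Kohn--Nirenberg inequality \eqref{CKN}. Combining these,
\[
(2-q)\alpha\delta(\lambda)\|u\|^2<(p-q)S^{-p/2}\|u\|^p,
\]
and since $u\neq 0$ we may divide by $\|u\|^2$ and take $(p-2)$-th roots (using $p>2$) to conclude
\[
\|u\|>\left(\frac{(2-q)\alpha\delta(\lambda)S^{p/2}}{p-q}\right)^{\frac{1}{p-2}}=:\rho>0 ,
\]
which is the assertion. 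Note that this $\rho$ is precisely the lower bound \eqref{2.13} found earlier for $N_{\lambda,\mu}^0$.

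I do not expect a genuine obstacle here; the only two points needing care are bookkeeping. First, when eliminating $\int f|u|^q\,dx$ one must track signs: $q-1$ can be negative since $q<2$, yet the surviving coefficients $2-q$, $4-q$, $p-q$ are all strictly positive, which is what makes the inequality usable. Second, one should observe that the final bound nowhere used the sign of $\int f|u|^q\,dx$ (that quantity was removed), so the estimate is genuinely uniform over the whole set $N_{\lambda,\mu}^-$. As an immediate byproduct, together with $N_{\lambda,\mu}^0=\emptyset$ (Lemma \ref{l22.2i}) this yields that $N_{\lambda,\mu}^-$ is closed in $D_a^{1,2}(\mathbb R^N)$: its closure is contained in $N_{\lambda,\mu}^-\cup N_{\lambda,\mu}^0$, and the bound $\|u\|\geq\rho$ keeps $0$ out of the closure.
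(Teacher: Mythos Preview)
Your argument is correct and follows the paper's proof through the key inequality
\[
(2-q)\left(\alpha\|u\|^2-\lambda\int_{\mathbb R^N}h(x)|x|^{-2(1+a)}|u|^2\,dx\right)+(4-q)\beta\|u\|^4<(p-q)\int_{\mathbb R^N}|x|^{-pb}|u|^p\,dx,
\]
obtained by eliminating the sublinear term from $\Phi_u''(1)<0$ via \eqref{phider}. The only divergence is in the last step: the paper keeps the $\beta\|u\|^4$ term and combines it with the quadratic term via $2\sqrt{ab}\le a+b$ to obtain a lower bound of the form $C\|u\|^3$, leading to $\|u\|^{p-3}>C$; you instead discard the quartic term and arrive directly at $\|u\|^{p-2}>C$. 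Your route is slightly more elementary, needs only $p>2$ rather than $p>3$, and produces the explicit $\beta$-independent constant matching \eqref{2.13}, which is a cleaner outcome.

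One small slip in your commentary: since $1<q<2$, the coefficient $q-1$ is positive, not possibly negative. This has no bearing on the proof itself, where only the positivity of $2-q$, $4-q$, and $p-q$ is used.
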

\begin{proof}
Let $u\in N_{\lambda,\mu}^-$. Then, from \eqref{Nsubset} we get
\begin{align*}
\alpha\|u\|^2-\lambda\int_{\mathbb R^N}h(x)|x|^{-2(1+a)}|u|^2dx+3\beta\|u\|^4-\mu(q-1)\int_{\mathbb R^N}f(x)|u|^qdx<(p-1)\int_{\mathbb R^N}|x|^{-pb}|u|^{p} dx.
\end{align*}
Now, eliminating the term $\displaystyle\int_{\mathbb R^N}f(x)|u|^q dx$ using the fact that $u\in N_{\lambda, \mu}$ (recall \eqref{phider}), we get
\begin{equation}\label{pglagd}
(2-q)\left(\alpha \|u\|^2-\lambda\int_{\mathbb R^N}h(x)|x|^{-2(1+a)}|u|^2dx\right)+(4-q)\beta\|u\|^4
 <(p-q)\int_{\mathbb R^N}|x|^{-pb}|u|^{p} dx\,.
\end{equation}
Also, the first term above in \eqref{pglagd} can be estimated from below by using \eqref{egcm} for $\lambda<\alpha \lambda_1(h)$, as
\begin{equation}\label{pgla1d}
(2-q)\alpha\delta(\lambda)\|u\|^2\leq (2-q)\left(\alpha \|u\|^2-\lambda\int_{\mathbb R^N}h(x)|x|^{-2(1+a)}|u|^2dx\right)\,.
\end{equation}
So, by combining \eqref{pglagd} and \eqref{pgla1d}, we get
\[
(2-q)\alpha\delta(\lambda)\|u\|^2
+(4-q)\beta\|u\|^4
 <(p-q)\int_{\mathbb R^N}|x|^{-pb}|u|^{p} dx\,.
\]
Next, in the left hand side of the above equation we use  $2\sqrt {ab}\leq (a+b)$ to get
\begin{align*}
 2\sqrt{\alpha\delta(\lambda)\beta(2-q)(4-q)}\|u\|^3&\leq (2-q)\alpha\delta(\lambda)\|u\|^2
+(4-q)\beta\|u\|^4\\
 &<(p-q)\int_{\mathbb R^N}|x|^{-pb}|u|^{p} dx\leq (p-q)  S^{\frac{-p}{2}}\|u\|^{p}\,,
\end{align*}
where the last estimate comes from \eqref{CKN}, which implies that $\|u\|^{p-3}>C$. Hence $\|u\|\geq \rho$ for some $\rho>0$\,.
 \end{proof}
\begin{cor}\label{nlclosedi}
$N_{\lambda, \mu}^-$ is a closed set in the $D^{1,2}_a(\mathbb R^N)$ topology.
\end{cor}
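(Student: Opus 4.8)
The plan is to prove sequential closedness directly. I would take an arbitrary sequence $\{u_n\}\subset N_{\lambda,\mu}^-$ with $u_n\to u$ strongly in $D_a^{1,2}(\mathbb R^N)$ and show that $u\in N_{\lambda,\mu}^-$; throughout I work under the standing assumptions $\lambda\in(0,\alpha\lambda_1(h))$ and $\mu\in(0,\mu_1)$, so that Lemma \ref{l22.2i} is available. The first step is to note that $u\neq 0$: by Lemma \ref{nlwi} we have $\|u_n\|\geq\rho$ for every $n$, and since the norm is continuous with respect to strong convergence this yields $\|u\|\geq\rho>0$.

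The second step is to show that $u\in N_{\lambda,\mu}$. Each $u_n$ satisfies \eqref{phider}, and every term occurring there is a continuous functional of its argument in the $D_a^{1,2}(\mathbb R^N)$ topology: the Kirchhoff contributions $\alpha\|\cdot\|^2$ and $\beta\|\cdot\|^4$ trivially so; the weighted term $\int_{\mathbb R^N}h(x)|x|^{-2(1+a)}|\cdot|^2\,dx$ by the embedding associated with \textbf{(H)}; the term $\int_{\mathbb R^N}f(x)|\cdot|^q\,dx$ by H\"older's inequality in the weighted spaces $L^r_c(\mathbb R^N)$ together with \textbf{(F)} and \eqref{CKN}; and $\int_{\mathbb R^N}|x|^{-pb}|\cdot|^p\,dx$ by the continuous embedding of $D_a^{1,2}(\mathbb R^N)$ into $L^p_b(\mathbb R^N)$. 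These are exactly the continuity properties behind the fact that $J_{\lambda,\mu}$ is of class $C^1$. Letting $n\to\infty$ in \eqref{phider} therefore gives $\langle J'_{\lambda,\mu}(u),u\rangle_*=0$, and since $u\neq 0$ we conclude $u\in N_{\lambda,\mu}$.

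The final step is to pass to the limit in the sign condition. By the same continuity observations applied to the explicit expression \eqref{phidder}, the map $w\mapsto\Phi_w^{\prime\prime}(1)$ is continuous on $D_a^{1,2}(\mathbb R^N)$, so from $\Phi_{u_n}^{\prime\prime}(1)<0$ we get $\Phi_u^{\prime\prime}(1)\leq 0$. To upgrade this to a strict inequality, suppose $\Phi_u^{\prime\prime}(1)=0$; then, together with $u\in N_{\lambda,\mu}$, this would mean $u\in N_{\lambda,\mu}^0$, contradicting Lemma \ref{l22.2i}. Hence $\Phi_u^{\prime\prime}(1)<0$ and $u\in N_{\lambda,\mu}^-$, which proves the corollary.

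I do not expect a genuine obstacle here: the statement is essentially a soft consequence of Lemmas \ref{nlwi} and \ref{l22.2i}. The only point needing a little care is the continuity of the lower-order functionals under strong convergence---particularly the weighted terms built from $h$ and $f$---which is handled by H\"older's inequality in the weighted $L^r_c$ spaces and the Caffarelli--Kohn--Nirenberg inequality \eqref{CKN}; the conceptual heart of the argument is simply that the only way the limit could fail to lie in $N_{\lambda,\mu}^-$ is by landing in $N_{\lambda,\mu}^0$, and that set has already been shown to be empty.
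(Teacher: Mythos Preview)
Your proposal is correct and follows essentially the same approach as the paper's proof, though you spell out in more detail the continuity of the relevant functionals. The paper compresses your second and third steps into the single assertion that $\overline{N_{\lambda,\mu}^-}=N_{\lambda,\mu}^-\cup\{0\}$ and then uses Lemma~\ref{nlwi} exactly as you do to rule out $u=0$.
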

\begin{proof}
Let $\{u_k\}$ be a sequence in $N_{\lambda, \mu}^-$ such that $u_k\rightarrow u$ in $D^{1,2}_a(\mathbb R^N)$. Then $u\in \overline {N_{\lambda, \mu}^-}=N_{\lambda, \mu}^-\cup\{0\}$, and using Lemma \ref{nlwi} we get $\|u\|=\displaystyle \lim_{k\rightarrow \infty}\|u_k\|\geq \rho>0$. Hence $u\neq 0$ and, therefore, $u\in N_{\lambda, \mu}^-$.
\end{proof}
\section{{Existence of  Palais-Smale sequence}}
{In this section, our aim is to extract the minimizing Palais-Smale sequences out of Nehari decompositions.}
 Let us fix $u\in {N}_{\lambda, \mu}$ and define $\mathcal{F}_u:\mathbb{R}\times D^{1, 2}_a(\mathbb R^N)\rightarrow \mathbb{R}$ as follows:
\begin{align*}
  \mathcal{F}_u(t,v) &=\langle J_{\lambda, \mu}^\prime (t(u-v)), (u-v)\rangle_* \\
  &={t^{2}\left(\alpha\|u-v\|^{2}-\lambda\int_{\mathbb R^N}h(x)|x|^{-2(1+a)}|u-v|^2dx\right)} +t^{4}\beta\|u-v\|^{4}-t^{q}\mu \int_{\mathbb R^N}{f(x)|u-v|^{q}dx}\\&\quad -t^{p}\int_{\mathbb R^N}|x|^{-pb}|u-v|^{p} dx\,.
\end{align*}
Then $\mathcal{F}_u(1,0) = 0,\; \frac{\partial}{ \partial t}\mathcal{F}_u(1,0)\neq 0$, since ${N}_{\lambda, \mu}^{0} = \emptyset$ for $\lambda\in (0, \alpha \lambda_1(h))$ and $\mu \in (0, \mu_1)$. So, we can apply the implicit function theorem to obtain a differentiable function $\xi : \mathcal{B}(0, \epsilon) \subseteq D^{1, 2}_a(\mathbb R^N) \rightarrow \mathbb{R}$ such that $\xi(0) = 1$,
\begin{equation}\label{eqzi}
\langle\xi^{'}(0), v\rangle=\frac{J_{\lambda, \mu}^{\prime\prime}(u)(u, v)+\langle J_{\lambda, \mu}^\prime (u), v\rangle_*}{J_{\lambda, \mu}^{\prime\prime}(u)(u, u)}\,,
\end{equation}
and $\mathcal{F}_u(\xi(v),v) = 0$ for all $v\in \mathcal{B}(0, \epsilon)$. Therefore, $ \xi(v)(u-v) \in {N}_{\lambda, \mu}$. For easy reference, we  summarize the above discussion in the following lemma.

\begin{lem}\label{ziii}
For a given $u \in {N}_{\lambda, \mu}^+$ (or $u\in {N}_{\lambda, \mu}^-$ ) and $\lambda \in (0, \alpha \lambda_{1}(h)), \mu \in (0, \mu_1)$ there exists $\epsilon > 0$ and a differentiable function
$\xi : \mathcal{B}(0,\epsilon) \subseteq D^{1,2}_a(\mathbb R^N) \rightarrow \mathbb{R}$ such that $\xi(0)=1,$ $\xi(v)(u-v)\in {N}_{\lambda, \mu}^+$ (or $\xi(v)(u-v)\in {N}_{\lambda, \mu}^-$)
and \eqref{eqzi} holds.
\end{lem}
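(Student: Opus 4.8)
The plan is to carry out the implicit-function-theorem argument that is begun just before the statement and then add the one missing point, namely that the perturbed element lies in the same component $N_{\lambda,\mu}^{\pm}$ as $u$. So fix $u\in N_{\lambda,\mu}^{+}$ (the case $u\in N_{\lambda,\mu}^{-}$ is identical) and consider the map $\mathcal{F}_u:\mathbb{R}\times D_a^{1,2}(\mathbb{R}^N)\to\mathbb{R}$ introduced above; its $C^1$ regularity is clear except for the term involving $\int_{\mathbb{R}^N}f(x)|u-v|^q\,dx$, whose differentiability on $D_a^{1,2}(\mathbb{R}^N)$ follows from assumption \textbf{(F)} together with \eqref{CKN}, exactly as for the corresponding term of $J_{\lambda,\mu}$. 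Since $u\in N_{\lambda,\mu}$ one has $\mathcal{F}_u(1,0)=\langle J_{\lambda,\mu}'(u),u\rangle_*=0$, and differentiating $\mathcal{F}_u(\,\cdot\,,0)$ at $t=1$ and using \eqref{phider} and \eqref{phidder} gives $\tfrac{\partial}{\partial t}\mathcal{F}_u(1,0)=\Phi_u''(1)$. Because $\lambda\in(0,\alpha\lambda_1(h))$ and $\mu\in(0,\mu_1)$, Lemma \ref{l22.2i} yields $N_{\lambda,\mu}^{0}=\emptyset$, hence $u\notin N_{\lambda,\mu}^{0}$ and $\Phi_u''(1)\neq 0$; this non-degeneracy is the only substantive ingredient.

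The implicit function theorem then provides $\epsilon>0$ and a $C^1$ map $\xi:\mathcal{B}(0,\epsilon)\subseteq D_a^{1,2}(\mathbb{R}^N)\to\mathbb{R}$ with $\xi(0)=1$ and $\mathcal{F}_u(\xi(v),v)=0$ for all $v\in\mathcal{B}(0,\epsilon)$; by the definition of $\mathcal{F}_u$ this is precisely the assertion $\xi(v)(u-v)\in N_{\lambda,\mu}$. Differentiating the identity $\mathcal{F}_u(\xi(v),v)=0$ at $v=0$ and using $\tfrac{\partial}{\partial t}\mathcal{F}_u(1,0)=\Phi_u''(1)=J_{\lambda,\mu}''(u)(u,u)$ along with $\partial_v\mathcal{F}_u(1,0)[v]=-\bigl(J_{\lambda,\mu}''(u)(u,v)+\langle J_{\lambda,\mu}'(u),v\rangle_*\bigr)$ — the bracket being read as the derivative at $u$ of the $C^1$ functional $w\mapsto\langle J_{\lambda,\mu}'(w),w\rangle_*$ — yields formula \eqref{eqzi}.

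It remains to shrink $\epsilon$ so that $\xi(v)(u-v)\in N_{\lambda,\mu}^{+}$. Set $w(v)=\xi(v)(u-v)$, a continuous $D_a^{1,2}(\mathbb{R}^N)$-valued map on $\mathcal{B}(0,\epsilon)$ with $w(0)=u$. By \eqref{phidder}, $w\mapsto\Phi_w''(1)$ is a continuous functional on $D_a^{1,2}(\mathbb{R}^N)$, being a finite combination of the continuous maps $w\mapsto\|w\|^2$, $w\mapsto\int_{\mathbb{R}^N}h(x)|x|^{-2(1+a)}|w|^2\,dx$, $w\mapsto\|w\|^4$, $w\mapsto\int_{\mathbb{R}^N}f(x)|w|^q\,dx$ and $w\mapsto\int_{\mathbb{R}^N}|x|^{-pb}|w|^p\,dx$. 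Hence $v\mapsto\Phi_{w(v)}''(1)$ is continuous on $\mathcal{B}(0,\epsilon)$ and equals $\Phi_u''(1)>0$ at $v=0$, so after shrinking $\epsilon$ we keep $\Phi_{w(v)}''(1)>0$ throughout $\mathcal{B}(0,\epsilon)$; since also $w(v)\neq 0$ there ($\|w(v)\|$ stays near $\|u\|>0$), we conclude $w(v)\in N_{\lambda,\mu}^{+}$, which proves the lemma. In the case $u\in N_{\lambda,\mu}^{-}$ one replaces $>0$ by $<0$ throughout. I do not foresee a real obstacle; the single delicate step is $\tfrac{\partial}{\partial t}\mathcal{F}_u(1,0)\neq 0$, which is where $N_{\lambda,\mu}^{0}=\emptyset$ (Lemma \ref{l22.2i}) is used.
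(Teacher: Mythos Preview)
Your proof is correct and follows exactly the paper's approach: the implicit-function-theorem argument applied to $\mathcal{F}_u$, with non-degeneracy supplied by $N_{\lambda,\mu}^0=\emptyset$ (Lemma~\ref{l22.2i}), is precisely what the paper carries out in the paragraph immediately preceding the lemma, which is then stated merely as a summary of that discussion. Your additional continuity argument ensuring that $\xi(v)(u-v)$ remains in the same component $N_{\lambda,\mu}^{\pm}$ is a detail the paper leaves implicit, so your treatment is in fact slightly more complete.
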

{
\begin{rem}
	Note that  the Nehari manifold lacks the basic linear space structure in order to give meaning to the derivative of the functional $J_{\lambda, \mu}$ restricted to $N_{\lambda, \mu}$. The above lemma is crucial to give sense to it by constructiong a variation of a point lying in Nehari manifold.
	\end{rem}}
Next, using the Lemma \ref{ziii}, we prove the following proposition which shows the existence of a Palais-Smale sequence.

\begin{pro}\label{prp1}
Let $\lambda \in (0,\alpha \lambda_{1}(h))$. Then:
\begin{enumerate}
\item For $\mu\in (0, \mu_1)$, where we recall that $\mu_1$ is defined in Lemma \ref{l22.2i}, there exists a minimizing sequence $\{u_k\} \subset {N}_{\lambda, \mu}$   such that
\begin{center}
    $J_{\lambda, \mu}(u_{k}) = \theta_{\lambda, \mu}+o_k(1)$\ and\ $J_{\lambda, \mu}^{'}(u_{k}) = o_k(1)\,;$
\end{center}
\item There exists $\mu_3>0$ such that, if $\mu\in (0, \mu_3)$, then there exists a minimizing sequence $\{u_k\} \subset {N}_{\lambda, \mu}^-$   such that
\begin{center}
    $J_{\lambda, \mu}(u_{k}) = \theta_{\lambda, \mu}^-+o_k(1)$\ and\ $J_{\lambda, \mu}^{'}(u_{k}) = o_k(1).$
\end{center}
\end{enumerate}
\end{pro}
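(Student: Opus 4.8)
The plan is to use the Ekeland variational principle on each of the relevant minimization problems, combined with the deformation argument provided by Lemma \ref{ziii}, to produce sequences that are minimizing \emph{and} asymptotically critical for the unconstrained functional $J_{\lambda,\mu}$. For part (1), first I would recall from Lemma \ref{cbbi} that $J_{\lambda,\mu}$ is coercive and bounded below on $N_{\lambda,\mu}$, so $\theta_{\lambda,\mu}$ is finite; moreover since $N_{\lambda,\mu}^0=\emptyset$ for $\mu\in(0,\mu_1)$ (Lemma \ref{l22.2i}) and $N_{\lambda,\mu}^-$ is closed (Corollary \ref{nlclosedi}) with $N_{\lambda,\mu}=N_{\lambda,\mu}^+\cup N_{\lambda,\mu}^-$, the set $N_{\lambda,\mu}$ itself is a complete metric space in the $D^{1,2}_a$ distance. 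Applying Ekeland's principle yields a sequence $\{u_k\}\subset N_{\lambda,\mu}$ with $J_{\lambda,\mu}(u_k)\to\theta_{\lambda,\mu}$ and $J_{\lambda,\mu}(w)\ge J_{\lambda,\mu}(u_k)-\tfrac1k\|w-u_k\|$ for all $w\in N_{\lambda,\mu}$. By coercivity $\{u_k\}$ is bounded, and using Lemma \ref{3a} (together with the coercivity bound) one checks that for large $k$, $u_k$ stays away from $0$, so $\|u_k\|$ is bounded below by a positive constant.

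The core step is to convert the ``almost minimizing on $N_{\lambda,\mu}$'' property into $\|J_{\lambda,\mu}^\prime(u_k)\|_*\to 0$. Here I would fix $k$, take an arbitrary $w\in D^{1,2}_a(\mathbb R^N)$ with $\|w\|=1$, and for small $t>0$ set $v=tw$ and use the function $\xi=\xi_k$ from Lemma \ref{ziii} to define $w_t:=\xi_k(tw)(u_k-tw)\in N_{\lambda,\mu}$. Plugging $w_t$ into the Ekeland inequality, expanding $J_{\lambda,\mu}(w_t)-J_{\lambda,\mu}(u_k)$ to first order in $t$, dividing by $t$ and letting $t\to 0^+$, one obtains
\begin{equation*}
-\tfrac1k\bigl(\|\xi_k^\prime(0)\|\,\|u_k\|+1\bigr)\le -\langle J_{\lambda,\mu}^\prime(u_k),w\rangle_*+\langle\xi_k^\prime(0),w\rangle\,\langle J_{\lambda,\mu}^\prime(u_k),u_k\rangle_*.
\end{equation*}
Since $u_k\in N_{\lambda,\mu}$ forces $\langle J_{\lambda,\mu}^\prime(u_k),u_k\rangle_*=0$, the last term vanishes, and replacing $w$ by $-w$ gives $|\langle J_{\lambda,\mu}^\prime(u_k),w\rangle_*|\le \tfrac{C}{k}(\|\xi_k^\prime(0)\|+1)$ uniformly over $\|w\|=1$. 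Thus it remains to show $\|\xi_k^\prime(0)\|$ is bounded in $k$; from the formula \eqref{eqzi} this follows provided $|J_{\lambda,\mu}^{\prime\prime}(u_k)(u_k,u_k)|=|\Phi_{u_k}^{\prime\prime}(1)|$ is bounded away from $0$. Boundedness of $\{u_k\}$ gives an upper estimate on the numerator, and the uniform lower bound on $|\Phi_{u_k}^{\prime\prime}(1)|$ is exactly the quantitative version of $N_{\lambda,\mu}^0=\emptyset$: retracing the proof of Lemma \ref{l22.2i} with the lower bound $\|u_k\|\ge\rho_0>0$ yields $|\Phi_{u_k}^{\prime\prime}(1)|\ge c>0$. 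This is the step I expect to be the main obstacle, since it requires a uniform (not merely pointwise) non-degeneracy estimate on the Nehari manifold, using both Case 1 and Case 2 of Lemma \ref{l22.2i} and the smallness of $\mu$.

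For part (2), the argument is the same but run on the complete metric space $N_{\lambda,\mu}^-$ (closed by Corollary \ref{nlclosedi}, hence complete), whose infimum $\theta_{\lambda,\mu}^-$ is finite by Lemma \ref{cbbi}. Ekeland's principle produces $\{u_k\}\subset N_{\lambda,\mu}^-$ with $J_{\lambda,\mu}(u_k)\to\theta_{\lambda,\mu}^-$ and the almost-minimality inequality restricted to $N_{\lambda,\mu}^-$. The deformation $w_t=\xi_k(tw)(u_k-tw)$ now lands in $N_{\lambda,\mu}^-$ by the second alternative in Lemma \ref{ziii} (valid for small $t$ by continuity, since $N_{\lambda,\mu}^-$ is open in $N_{\lambda,\mu}$), so the same first-order expansion applies. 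The only new point is the uniform lower bound $\|u_k\|\ge\rho$, which is furnished directly by Lemma \ref{nlwi} and guarantees $|\Phi_{u_k}^{\prime\prime}(1)|\ge c>0$ as before; one sets $\mu_3:=\min\{\mu_1,\mu_2\}$ (or whatever threshold makes both the non-degeneracy and the two-root structure of Case 1 available) so that all the cited lemmas apply simultaneously. Concluding, $\{u_k\}\subset N_{\lambda,\mu}^-$ satisfies $J_{\lambda,\mu}(u_k)=\theta_{\lambda,\mu}^-+o_k(1)$ and $J_{\lambda,\mu}^\prime(u_k)=o_k(1)$.
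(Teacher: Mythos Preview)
Your proposal is correct and follows essentially the same route as the paper: Ekeland's variational principle on the Nehari set, the deformation $\xi_k(tw)(u_k-tw)$ from Lemma~\ref{ziii}, a first-order expansion to extract $\langle J_{\lambda,\mu}'(u_k),w\rangle_*$, and finally a uniform lower bound on $|\Phi_{u_k}''(1)|$ obtained by retracing the quantitative estimates from Lemma~\ref{l22.2i} (the paper phrases this last step as a contradiction via $E_{\lambda,\mu}(u_k)=o_k(1)$, but the content is identical). One small caveat: $N_{\lambda,\mu}$ is not literally complete (zero could be a limit point of $N_{\lambda,\mu}^+$), so Ekeland is applied to the closure and the lower bound on $\|u_k\|$ coming from $\theta_{\lambda,\mu}<0$ is what keeps the sequence inside $N_{\lambda,\mu}$; the paper handles this implicitly in the same way.
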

\begin{proof}
To avoid any repetition, we only prove the part ${\it (1)}$ of the above Proposition. The proof for the part ${\it (2)}$ is similar. From Lemma \ref{cbbi}, $J_{\lambda, \mu}$ is bounded below on $N_{\lambda, \mu}$. So, by Ekeland Variational Principle, there exists a minimizing sequence $\{u_k\}\in N_{\lambda, \mu}$ such that
\begin{align}
J_{\lambda, \mu}(u_k)&\leq \theta_{\lambda, \mu}+\frac{1}{k},\label{Pek1}\\
J_{\lambda, \mu}(v)&\geq J_{\lambda, \mu}(u_k)- \frac{1}{k}\|v-u_k\|\;\;\text{for all}\;\;v\in  N_{\lambda, \mu}.\nonumber
\end{align}
Using \eqref{Pek1} and Lemma \ref{3a}, it is easy to show that $u_k\not\equiv 0$. Indeed, using \eqref{Pek1} and  Lemma \ref{3a}, we get
\begin{equation}\label{psre12}
\frac{\theta_{\lambda, \mu}}{2}\geq  J_{\lambda, \mu}(u_{k})\geq \left(\frac{1}{2}-\frac{1}{p}\right)\left(\alpha\|u_k\|^{2}-\lambda\int_{\mathbb R^N}h(x)|x|^{-2(1+a)}|u_k|^2dx\right)-\mu\left(\frac{1}{q}-\frac{1}{p}\right)\int_{\mathbb R^N }  f(x)|u_k|^{q}dx
\end{equation}
which implies (using $\theta_{\lambda, \mu}<0$)
\begin{equation}\label{psre13}
\begin{aligned}
&-\frac{pq}{2(p-q)}\theta_{\lambda, \mu}\leq \mu\int_{\mathbb R^N }  f(x)|u_k|^{q}dx\leq \mu \|f\|_oS^{\frac{-q}{2}}\|u_k\|^q.
\end{aligned}
\end{equation}
From \eqref{psre13}, it immediately follows that
\[
\|u_k\|\geq \left(\frac{-pq \theta_{\lambda, \mu} S^{\frac{q}{2}}}{2\mu(p-q)\|f\|_o}\right)^\frac{1}{q}.
\]
Moreover, combining \eqref{psre12} and $\theta_{\lambda, \mu}<0$, we obtain
\[
\|u_k\|\leq \left(\frac{2\mu (p-q)\|f\|_oS^{\frac{-q}{2}}}{q(p-2)}\right)^\frac{1}{2-q}.
\]
Hence the first result in part ${\it (1)}$ is proved. Next we claim that $\|J_{\lambda, \mu}^\prime(u_k)\|\rightarrow 0$ as $k \rightarrow 0$.
Indeed, from Lemma \ref{ziii} we get a differentiable functions $\xi_k:\mathcal{B}(0, \epsilon_k)\rightarrow \mathbb{R}$ for some $\epsilon_k>0$ such that $\xi_k(v)(u_k-v)\in {N}_{\lambda, \mu}$\;\; $\textrm{for all}\;\; v\in \mathcal{B}(0, \epsilon_k).$
Now, with fixed $k$, choose $0<\rho<\epsilon_k$ and let $0\neq u\in D^{1,2}_a(\mathbb R^N)$. Then, let $v_\rho={\rho u}/{\|u\|}$ and set $\eta_\rho=\xi_k(v_\rho)(u_k-v_\rho)$. Since $\eta_\rho \in  {N}_{\lambda, \mu}$, we get from \eqref{Pek1} that
\begin{align*}
J_{\lambda, \mu}(\eta_\rho)-J_{\lambda, \mu}(u_k)\geq-\frac{1}{k}\|\eta_\rho-u_k\|.
\end{align*}
Next, the Mean Value Theorem yields
\begin{equation*}
\langle J_{\lambda, \mu}^{\prime}(u_k),\eta_\rho-u_k\rangle_*+o_k(\|\eta_\rho-u_k\|)\geq -\frac{1}{k}\|\eta_\rho-u_k\|.
\end{equation*}
Hence
\begin{align*}
\langle J_{\lambda, \mu}^{\prime}(u_k),-v_\rho\rangle_*+(\xi_k(v_\rho)-1)\langle  J_{\lambda, \mu}^{\prime}(u_k),(u_k-v_\rho)\rangle_* \geq -\frac{1}{k}\|\eta_\rho -u_k\|+o_k(\|\eta_\rho -u_k\|)
\end{align*}
and, since $\langle J_{\lambda, \mu}^{\prime}(\eta_\rho),(u_k-v_\rho)\rangle_*=0$, we have
\begin{align*}
-\rho{\left \langle J_{\lambda, \mu}^{\prime}(u_k),\frac{u}{\|u\|}\right\rangle_*}&+(\xi_k(v_\rho)-1)\langle J_{\lambda, \mu}^{\prime}(u_k)-J_{\lambda, \mu}^{\prime}(\eta_\rho),(u_k-v_\rho)\rangle_* \geq -\frac{1}{k}\|\eta_\rho-u_k\|+o_k(\|\eta_\rho-u_k\|).
\end{align*}
Therefore,
\begin{align}\label{fifte}
{\left \langle J_{\lambda, \mu}^{\prime}(u_k),\frac{u}{\|u\|}\right\rangle_*} \leq \frac{1}{k\rho}\|\eta_\rho-u_k\|&+\frac{o_k(\|\eta_\rho-u_k\|)}{\rho}+
\frac{(\xi_k(v_\rho)-1)}{\rho}\langle  J_{\lambda, \mu}^{\prime}(u_k)-J_{\lambda, \mu}^{\prime}(\eta_\rho),(u_k-v_\rho)\rangle_*.
\end{align}
And, since
$\displaystyle
\|\eta_\rho-u_k\|\leq \rho|\xi_k(v_\rho)|+|\xi_k(v_\rho)-1|\|u_k\|
$
and
$$
\displaystyle\lim_{\rho\rightarrow 0^+}\frac{|\xi_k(v_\rho)-1|}{\rho}\leq \|\xi_k'(0)\|_*,
$$
we obtain, by letting  $\rho\rightarrow 0^+$\ in \eqref{fifte}, that
\begin{equation*}
{\left\langle J_{\lambda, \mu}^\prime(u_k),\frac{u}{\|u\|}\right\rangle_*}\leq\frac{C}{k}(1+{\|\xi_k^{'}(0)\|_*})
\end{equation*}
for some constant $C>0$, independent of $u$.  So, if we can show that $\|\xi_k^{'}(0)\|$ is bounded then we are done.
From Lemma \ref{ziii}, the boundedness of $\{u_k\}$, and H\"older's inequality, we get, for some $K>0$, that
\begin{align*}
{\langle \xi^{\prime}(0),v\rangle_*}= \frac{K\|v\|}{J_{\lambda, \mu}^{\prime\prime}(u)(u, u)}.
\end{align*}
Therefore, it suffices to show that the denominator in the above expression
is bounded away from zero. Suppose not. Then there exists a subsequence, still denoted by $\{u_k\}$, such that
\begin{equation}\label{baw}
(2-q)\left(\alpha\|u_k\|^{2}-\lambda\int_{\mathbb R^N}h(x)|x|^{-2(1+a)}|u_k|^2dx\right) + (4-q)\beta\|u_k\|^{4}-(p-q)\displaystyle \int_{\mathbb R^N}|x|^{-pb}|u_k|^{p} dx=o_k(1).
\end{equation}
From \eqref{baw} and using {$\{u_k\}\subset N_{\lambda, \mu}$}, we get $E_{\lambda, \mu}(u_k)=o_k(1)$. However, using the fact that $\|u_k\|\geq C>0$ and following the proof of Lemma \ref{l22.2i}, we get that $E_{\lambda, \mu} (u_k)>C_1$ for some $C_1>0$ and for all $k$, which is a contradiction. The proof of Proposition \ref{prp1} is now complete.
\end{proof}

\section{Compactness Results}
{As we know that the problem in consideration has two type of compactness issues. First one is because of unbounded domain and the second is because of critical growth. This section is devoted to study these compactness concerns.
\begin{pro}\label{comunbounded}
	Let $(u_{k})$ be a  sequence in $\mathcal{D}_{a}^{1,2}(\mathbb R^N)$ such that $u_k\rightharpoonup u$ in $\mathcal{D}_{a}^{1,2}(\mathbb R^N)$. Then the following convergence holds true
	\begin{align*}
	\lim_{k\to \infty}\int_{\mathbb R^N}h(x)|x|^{-2(1+a)}|u_k|^2dx&=\int_{\mathbb R^N}h(x)|x|^{-2(1+a)}|u|^2dx\\
\lim_{k\to \infty}	\int_{\mathbb R^N}f(x)|u_k|^qdx&=	\int_{\mathbb R^N}f(x)|u|^qdx
	\end{align*}
\end{pro}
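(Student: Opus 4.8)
\noindent The plan is to prove both convergences at once by splitting $\mathbb R^N$ into a small ball $B_\epsilon$ about the origin, the exterior region $\mathbb R^N\setminus B_R$, and the compact annulus $\Omega_{\epsilon,R}:=\{\epsilon\le|x|\le R\}$. On the two ``tails'' I will show that the relevant integrals are uniformly small in $k$, using \textbf{(H)}, \textbf{(F)} and the Caffarelli--Kohn--Nirenberg inequality \eqref{CKN}; on $\Omega_{\epsilon,R}$, where all the weights are bounded above and below by positive constants, I will invoke Rellich--Kondrachov. First note that $u_k\rightharpoonup u$ forces $\{u_k\}$ to be bounded in $D^{1,2}_a(\mathbb R^N)$, say $\|u_k\|\le C$, and also $\|u\|\le C$.

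\noindent For the tails the crucial step is a H\"older splitting designed so that the ``$u$-factor'' carries exactly the weight $|x|^{-pb}$ appearing in \eqref{CKN}. Writing $h|x|^{-2(1+a)}|u|^2=(h|x|^{-p_0})(|x|^{p_0-2(1+a)}|u|^2)$ and using H\"older with exponents $N/p_0$ and $N/(N-p_0)$, together with the identities $2N/(N-p_0)=p$ and $(p_0-2(1+a))\,N/(N-p_0)=-pb$ (both consequences of $p_0=2-2(b-a)$), we get, for every measurable $A\subseteq\mathbb R^N$,
\[
\int_A h(x)|x|^{-2(1+a)}|u|^2\,dx\le\Big(\int_A h^{N/p_0}|x|^{-N}\,dx\Big)^{\!p_0/N}\Big(\int_A|x|^{-pb}|u|^p\,dx\Big)^{\!2/p}\le\|h\|_{L^{N/p_0}_{p_0}(A)}\,S^{-1}\,C^2 .
\]
Similarly, writing $f|u|^q=(f|x|^{bq})(|x|^{-bq}|u|^q)$ and applying H\"older with $p/(p-q)$ and $p/q$,
\[
\Big|\int_A f(x)|u|^q\,dx\Big|\le\Big(\int_A|x|^{\frac{bpq}{p-q}}|f|^{\frac{p}{p-q}}\,dx\Big)^{\!\frac{p-q}{p}}\Big(\int_A|x|^{-pb}|u|^p\,dx\Big)^{\!q/p}\le\|f\|_{o,A}\,S^{-q/2}\,C^q .
\]
Since $h\in L^{N/p_0}_{p_0}(\mathbb R^N)$ and $\|f\|_o<\infty$, absolute continuity of the integral lets me choose $\epsilon>0$ small and $R>0$ large so that, for $A=B_\epsilon$ and for $A=\mathbb R^N\setminus B_R$, every one of these bounds is $<\delta$; the same estimates hold with $u$ in place of $u_k$.

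\noindent On $\Omega_{\epsilon,R}$ the sequence $\{u_k\}$ is bounded in $W^{1,2}(\Omega_{\epsilon,R})$ and $u_k\rightharpoonup u$ there, so by Rellich--Kondrachov $u_k\to u$ strongly in $L^r(\Omega_{\epsilon,R})$ for every $r<2^\ast:=2N/(N-2)$. Because $p_0=2-2(b-a)<2$ one has $N/p_0>N/2$, hence \textbf{(H)} gives $h\in L^{m}(\Omega_{\epsilon,R})$ with $m:=\tfrac{N}{p_0}+\theta>\tfrac N2$, so $2m'<2^\ast$ and therefore $u_k^2\to u^2$ in $L^{m'}(\Omega_{\epsilon,R})$; consequently $\int_{\Omega_{\epsilon,R}}h|x|^{-2(1+a)}|u_k|^2\,dx\to\int_{\Omega_{\epsilon,R}}h|x|^{-2(1+a)}|u|^2\,dx$. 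Likewise $p<2^\ast$ (since $b>a$) and $f\in L^{p/(p-q)}(\Omega_{\epsilon,R})$, so $|u_k|^q\to|u|^q$ in $L^{p/q}(\Omega_{\epsilon,R})$ ($p/q$ being the conjugate exponent of $p/(p-q)$), which gives $\int_{\Omega_{\epsilon,R}}f|u_k|^q\,dx\to\int_{\Omega_{\epsilon,R}}f|u|^q\,dx$. Combining the annulus convergence with the uniform tail bounds, and then letting $\delta\to0$, yields both stated limits.

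\noindent The only genuinely delicate point, which I would check carefully, is the exponent bookkeeping in the tail estimates: the exponent $N/p_0$ with $p_0=2-2(b-a)$ in \textbf{(H)} and the weight $|x|^{bpq/(p-q)}$ with exponent $p/(p-q)$ in \textbf{(F)} are tuned precisely so that the residual $u$-integral becomes $\int_A|x|^{-pb}|u|^p\,dx$, which \eqref{CKN} bounds by a constant multiple of $\|u\|^p$. The remaining ingredients --- boundedness of weakly convergent sequences, absolute continuity of the integral on the tails, and the legitimacy of Rellich--Kondrachov on $\Omega_{\epsilon,R}$ (guaranteed by $b>a$, which forces simultaneously $p<2^\ast$ and $N/p_0>N/2$) --- are routine.
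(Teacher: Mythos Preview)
Your proof is correct and follows essentially the same strategy as the paper: a decomposition of $\mathbb R^N$ into tails (where the weighted integrability of $h$ and $f$ together with \eqref{CKN} gives uniform smallness) and a compact annulus (where Rellich--Kondrachov applies because the weights are bounded away from $0$ and $\infty$). The only cosmetic differences are that the paper uses a two-region split for the $f$-integral (no small ball is needed since $f|u|^q$ carries no singular weight at the origin) and, on the compact part, invokes $f\in L^\infty_{\mathrm{loc}}$ (from continuity) together with $u_k\to u$ in $L^q$ rather than your $L^{p/(p-q)}$--$L^{p/q}$ pairing; your exponent bookkeeping is in fact more carefully spelled out than the paper's.
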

\begin{proof}
		Let us define define $K: \mathcal{D}_{a}^{1,2}(\mathbb R^N)\to \mathbb R$ as $$K(u)=	\int_{\mathbb R^N}h(x)|x|^{-2(a+1)}|u|^2dx.$$ Under assumption $\textbf(H)$, the functional is well defined. Indeed, using the continuous inclusion of $\mathcal{D}_{a}^{1,2}(\mathbb R^N) \hookrightarrow L^p_b(\mathbb R^N)$ and H\"older's inequality with exponents $p/2$ and $p/(p-2)$, we get
		\[
			\int_{\mathbb R^N}h(x)|x|^{-2(a+1)}|u|^2dx\leq \left(\int_{\mathbb R^N}|h(x)|^\frac{p}{p-2}|x|^\frac{-p(2-2(b-2))}{p-2} dx\right)^\frac{p-2}{p}\left(\int_{\mathbb R^N} |x|^{-pb}|u_k|^{p} dx \right)^\frac{2}{p}<\infty
		\]
		Next we show that the mapping $u\mapsto K(u)$ is compact. 
For that, take $\{u_k\}\subset \mathcal{D}_{a}^{1,2}(\mathbb R^N) $ such that $u_k\rightharpoonup u$ in $\mathcal{D}_{a}^{1,2}(\mathbb R^N)$. We have to show that $K(u_k)\to K(u)$, upto a subsequences, as $k\to \infty$. Since $u_k\rightharpoonup u$, there exists $C>0$ such that $\|u_k\|\leq C$. 
Now let us estimate the following 
\begin{align*}
\left|K(u_k)-K(u)\right|&\leq \int_{|x|<\rho}|h(x)||x|^{-2(a+1)}\left||u_k|^2-|u|^2\right|dx+\int_{\rho<|x|<R}|h(x)||x|^{-2(a+1)}\left||u_k|^2-|u|^2\right|dx\\
&\quad+\int_{|x|>R}|h(x)||x|^{-2(a+1)}\left||u_k|^2-|u|^2\right|dx:=I_1+I_2+I_3.
\end{align*}
We estimate each of above three estimates one by one. 
Using the fact that $h\in L^\frac{N}{p_0}_{p_0}(\mathbb R^N)$ and the boundedness of the sequence $\{u_k\}$, for any given $\epsilon>0$, there exists $R=R(\epsilon)$ such that 
\[
I_3\leq \|h\|_{L^{N/ {p_0}}_{p_0}(|x|>R)}\|u_k\|^2\leq \frac{\epsilon}{3},
\]
by choosing $ \|h\|_{L^{N/{p_0}}_{p_0}(|x|>R)}\leq \epsilon/(3C^2)$, where $C$ is the bound for the $\mathcal{D}_{a}^{1,2}(\mathbb R^N)$ norm of the sequence $\{u_k\}$. 
Similarly, for $I_1$, for a given $\epsilon>0$ there exists $\rho=\rho(\epsilon)>0$, such that 
\[
I_1\leq \|h\|_{L^{N/ {p_0}}_{p_0}(|x|<\rho)}\|u_k\|^2\leq \frac{\epsilon}{3},
\]
again, by choosing $ \|h\|_{L^{N/{p_0}}_{p_0}(|x|<\rho)}\leq \epsilon/(3C^2)$.
Now, we estimate the integral $I_2$, as follows. Note that we have the following embeddings
\[
\mathcal{D}_{a}^{1,2}(\mathbb R^N)\subset {W}_{a}^{1,2}(B_R\setminus B_\rho)\subset L^\gamma(B_R\setminus B_\rho),
\]
for all $2\leq \gamma\leq 2^*=2N/(N-2)$ with the last embedding being compact for all $2\leq \gamma<2^*$. Keeping in  mind the assumption on $h(x)$, being $h\in L^{N/{p_0}}_{\mathrm {loc}}(\mathbb R^N\setminus \{0\})$ and $p=2N/(N+2(b-a)-2)<2^*$, we get 
\begin{align*}
\int_{\rho<|x|<R}|h(x)||x|^{-2(a+1)}|u_k-u|^2dx&\leq \rho^{-2(a+1)}\int_{\rho<|x|<R}|h(x)||u_k-u|^2dx\\
&\leq \rho^{-2(a+1)}\int_{|x|<R}|h(x)||u_k-u|^2dx\\
&\leq \rho^{-2(a+1)}\|h\|_{L^{N/{p_0}}{(|x|<R)}}\|u_k-u\|_{L^2(|x|<R)}<\frac{\epsilon}{3}.
\end{align*}
for $k>k_0$, for some $k_0=k_0(\epsilon)$ sufficiently large. 
Hence for a given $\epsilon>0$, there exists $k_0=k_0(\epsilon)$ such that 
\[
I_2=\int_{\rho<|x|<R}|h(x)||x|^{-2(a+1)}\left||u_k|^2-|u|^2\right|dx< \frac{\epsilon}{3}
\]
for $k>k_0$.
Now combining above three estimates we get
\[
\left|K(u_k)-K(u)\right|<\epsilon \;\;\text{for }\;\ k>k_0
\]
Hence the proof of the first convergence of the Proposition. \\
Now we prove the second convergence of the proposition as follows. 
	Define $K: \mathcal{D}_{a}^{1,2}(\mathbb R^N)\to \mathbb R$ as $$K(u)=	\int_{\mathbb R^N}f(x)|u|^qdx.$$ It is easy to see from assumption $\textbf(F)$ that $K$ is well defined. In fact, 
	\[
	0\leq K(u)\leq S^\frac{-q}{2}\|f\|_o\|u\|^q<\infty, \;\;\textrm{for all }\;\;u\in\mathcal{D}_{a}^{1,2}(\mathbb R^N).
	\]
Next we claim that $K$ is weakly continuous on $\mathcal{D}_{a}^{1,2}(\mathbb R^N)$, that is, $u_k\rightharpoonup u$ in  $\mathcal{D}_{a}^{1,2}(\mathbb R^N)$ implies $K(u_k)\to K(u)$ in $\mathbb R$. 
	Since $u_k\rightharpoonup u$ in  $\mathcal{D}_{a}^{1,2}(\mathbb R^N)$, we can assume that the sequence $\{u_k\}$ is bounded in $\mathcal{D}_{a}^{1,2}(\mathbb R^N)$, that is, $\|u_k\|\leq C$, uniformly for all $k\in \mathbb N$ and for some positive constant 
	$C$.
	As $|x|^{bq}f(x) \in L^\frac{p}{p-q}(\mathbb R; dx)$, for any given $\epsilon>0$, there exists an open ball $B_R$ arround $0\in \mathbb R^N$ of radius $R=R(\epsilon)$ such that 
	\[
	\int_{\mathbb R^N\setminus {B_R}}|x|^\frac{bpq}{p-q}|f(x)|^\frac{p}{p-q} dx<\frac{S^\frac{q}{2}\epsilon^\frac{p}{p-q}}{4C^q}.
	\]
	Hence from H\"older inequality and above estimate, 
	\begin{equation}\label{estun}
		\int_{\mathbb R^N\setminus {B_R}}f(x)|u_k|^qdx\leq \frac{\epsilon}{4}
	\end{equation}
Similarly, (possibly for different $R$)
\begin{equation}\label{estu}
 \int_{\mathbb R^N\setminus {B_R}}f(x)|u|^qdx\leq \frac{\epsilon}{4}.
\end{equation}
 Combining \eqref{estun} and \eqref{estu}, we get 
\[
\left| \int_{\mathbb R^N\setminus {B_R}}f(x)(|u_k|^q-|u|^q) dx \right|<\frac{\epsilon}{2}.
\] 
Note that we have the following embeddings 
\[
D^{1, 2}_a(\mathbb R^N)\hookrightarrow W^{1, 2}_a(B_R)\hookrightarrow  L^q(B_R),
\]
where the second embedding is compact as $1<q<2^*=2N/(N-2)$.  Using $f\in L^\infty_{\mathrm {loc}}(\mathbb R^N)$ and the compactness of the embedding $W^{1, 2}_a(B_R)\hookrightarrow L^q(B_R)$,  we get the following 
\[
\int_{{B_R}}f(x)|u_k-u|^q dx\leq \sup_{x\in B_R}|f(x)| \int_{{B_R}}|u_k-u|^q dx\leq \frac{\epsilon}{2}, \;\;\text{for }\;\; k \;\;\text{large}.
\]
Hence, for $k$ large, 
\[
0\leq \left| \int_{{B_R}}f(x)(|u_k|^q-|u|^q) dx \right|< \frac{\epsilon}{2}.
\] 
	\end{proof}
}
{In the presence of critical Caffarelli-Kohn-Nirenberg growth, we have the following compactness result for Palais-Smale sequences under certain threshold.}
\begin{pro}\label{crcmpi}
Let $(u_{k})$ be a bounded sequence in $\mathcal{D}_{a}^{1,2}(\mathbb R^N)$ such that
$$
J_{\lambda, \mu}(u_k) \to c<c_{\lambda, \mu} \quad\text{and}\quad
J'_{\lambda, \mu}(u_k) \to 0 \text{ in  } (\mathcal{D}_{a}^{1,2}(\mathbb R^N))^{*}\ \text{as } k\to \infty,
$$
where
\begin{equation}\label{clambdamu}
c_{\lambda, \mu}= \left(\frac{1}{2}-\frac{1}{p}\right)\left({\alpha  S}\right)^\frac{p}{p-2}-\mu^\frac{2}{2-q}\left(\frac{(4-q)\|f\|_* S^\frac{-q}{2}}{4q}\right)^\frac{2}{2-q} \left(\frac{2-q}{2}\right)\left(\frac{2q}{\alpha\delta(\lambda)}\right)^\frac{q}{2-q}.
\end{equation}
Then $\{u_k\}$ possesses a  subsequence that strongly converges in $\mathcal{D}_{a}^{1,2}(\mathbb R^N)$.
\end{pro}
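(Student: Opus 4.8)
The plan is to run a concentration--compactness argument, carried out via the splitting $u_k=u+w_k$, with the weighted Caffarelli--Kohn--Nirenberg inequality \eqref{CKN} supplying the missing compactness and the strict inequality $c<c_{\lambda,\mu}$ excluding any loss of mass (be it at the origin or at infinity). First I would pass to a subsequence with $u_k\rightharpoonup u$ in $\mathcal D_a^{1,2}(\mathbb R^N)$, $\|u_k\|^2\to\ell$, and, using the compact embeddings $\mathcal D_a^{1,2}(\mathbb R^N)\hookrightarrow W^{1,2}(B_R\setminus B_\rho)\hookrightarrow\hookrightarrow L^2(B_R\setminus B_\rho)$ on annuli avoiding the origin, $u_k\to u$ a.e.\ in $\mathbb R^N$. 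Set $w_k=u_k-u$ and, along a further subsequence, let $m=\lim_k\|w_k\|^2$ and $D=\lim_k\int_{\mathbb R^N}|x|^{-pb}|w_k|^p\,dx$. The Brezis--Lieb lemma for the measure $|x|^{-pb}\,dx$, together with $\|u_k\|^2=\|u\|^2+\|w_k\|^2+2\langle u,w_k\rangle$ and $\langle u,w_k\rangle\to0$, gives
\begin{equation*}
\int_{\mathbb R^N}|x|^{-pb}|u_k|^p\,dx=\int_{\mathbb R^N}|x|^{-pb}|u|^p\,dx+D+o_k(1),\qquad \ell=\|u\|^2+m.
\end{equation*}
It then suffices to show $m=0$, since combined with $u_k\rightharpoonup u$ this forces $u_k\to u$ strongly in $\mathcal D_a^{1,2}(\mathbb R^N)$.

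Next I would identify the concentrated mass. From $J'_{\lambda,\mu}(u_k)\to0$ and the boundedness of $\{u_k-u\}$ we get $\langle J'_{\lambda,\mu}(u_k),u_k-u\rangle_*\to0$. Along the subsequence $M(\|u_k\|^2)\to M_\infty:=\alpha+\beta\ell\ge\alpha$; the terms involving $h$ and $f$ vanish in the limit by Proposition~\ref{comunbounded} (and a localisation argument of the same type for $\int f(x)|u_k|^{q-2}u_k(u_k-u)\,dx$); and $\int|x|^{-pb}|u_k|^{p-2}u_k(u_k-u)\,dx\to D$, using that $|u_k|^{p-2}u_k\rightharpoonup|u|^{p-2}u$ in $L^{p/(p-1)}(\mathbb R^N,|x|^{-pb}\,dx)$ together with the Brezis--Lieb identity above. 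Hence $M_\infty m-D=0$, i.e.\ $D=M_\infty m$. Applying \eqref{CKN} to $w_k$ and letting $k\to\infty$ gives $SD^{2/p}\le m=D/M_\infty$, so that whenever $m>0$,
\begin{equation*}
D\ \ge\ (SM_\infty)^{\frac{p}{p-2}}.
\end{equation*}

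The heart of the proof is a level estimate yielding a contradiction. Assume $m>0$. Since $\widehat M(t)=\alpha t+\tfrac\beta2 t^2$, the combination $J_{\lambda,\mu}(u_k)-\tfrac14\langle J'_{\lambda,\mu}(u_k),u_k\rangle_*$ annihilates every $\beta$-term, and because $\langle J'_{\lambda,\mu}(u_k),u_k\rangle_*\to0$,
\begin{equation*}
c=\lim_{k}\Big[\tfrac{\alpha}{4}\|u_k\|^2-\tfrac{\lambda}{4}\int_{\mathbb R^N}h(x)|x|^{-2(1+a)}|u_k|^2\,dx-\mu\Big(\tfrac1q-\tfrac14\Big)\int_{\mathbb R^N}f(x)|u_k|^q\,dx+\Big(\tfrac14-\tfrac1p\Big)\int_{\mathbb R^N}|x|^{-pb}|u_k|^p\,dx\Big].
\end{equation*}
Passing to the limit with Proposition~\ref{comunbounded} and the Brezis--Lieb identity, then using \eqref{egcm} in the form $\alpha\|u\|^2-\lambda\int_{\mathbb R^N}h(x)|x|^{-2(1+a)}u^2\,dx\ge\alpha\delta(\lambda)\|u\|^2$, the bound $\int_{\mathbb R^N}f(x)|u|^q\,dx\le\|f\|_oS^{-q/2}\|u\|^q$, and discarding the nonnegative term $(\tfrac14-\tfrac1p)\int_{\mathbb R^N}|x|^{-pb}|u|^p\,dx$ (here $p>4$ is used), one obtains
\begin{equation*}
c\ \ge\ \Big[\tfrac14\alpha\delta(\lambda)\|u\|^2-\mu\Big(\tfrac1q-\tfrac14\Big)\|f\|_oS^{-q/2}\|u\|^q\Big]+\tfrac{\alpha}{4}\,m+\Big(\tfrac14-\tfrac1p\Big)D.
\end{equation*}
Minimizing the bracketed function of $\|u\|\ge0$ by the same elementary computation as in the proof of Lemma~\ref{cbbi} reproduces exactly the $\mu$-dependent term of \eqref{clambdamu}. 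For the last two terms, substituting $m=D/M_\infty$ and $D\ge(SM_\infty)^{p/(p-2)}$ and writing $x=M_\infty/\alpha\ge1$, their sum is bounded below by $(\alpha S)^{\frac{p}{p-2}}g(x)$ with $g(x)=\tfrac14 x^{\frac{p}{p-2}-1}+\big(\tfrac14-\tfrac1p\big)x^{\frac{p}{p-2}}$, and $g$ is strictly increasing on $[1,\infty)$ with $g(1)=\tfrac12-\tfrac1p$; hence $\tfrac{\alpha}{4}m+(\tfrac14-\tfrac1p)D\ge(\tfrac12-\tfrac1p)(\alpha S)^{\frac{p}{p-2}}$. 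Adding the two contributions gives $c\ge c_{\lambda,\mu}$, contradicting $c<c_{\lambda,\mu}$; therefore $m=0$ and $u_k\to u$ strongly.

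The delicate step is the level estimate in the third paragraph: one must hit on the combination of $J_{\lambda,\mu}(u_k)$ and $\langle J'_{\lambda,\mu}(u_k),u_k\rangle_*$ that both kills the quartic Kirchhoff contribution and, after minimizing in $\|u\|$, regenerates the sublinear correction of \eqref{clambdamu} --- the multiplier $\tfrac14$ is forced by $\widehat M(t)=\alpha t+\tfrac\beta2 t^2$ --- and then verify the monotonicity of $g$, which is exactly what makes the bound $\tfrac\alpha4 m+(\tfrac14-\tfrac1p)D\ge(\tfrac12-\tfrac1p)(\alpha S)^{p/(p-2)}$ hold \emph{uniformly} in $\alpha,\beta>0$. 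The identification $D=M_\infty m$ is comparatively routine given Proposition~\ref{comunbounded}, but still needs some care with the weak convergence of $|u_k|^{p-2}u_k$ in the weighted Lebesgue dual and with the fact that the limiting Kirchhoff constant $M_\infty\ge\alpha$ need not coincide with $M(\|u\|^2)$.
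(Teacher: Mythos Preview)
Your argument is correct and the decisive energy estimate---computing $J_{\lambda,\mu}(u_k)-\tfrac14\langle J'_{\lambda,\mu}(u_k),u_k\rangle_*$, passing to the limit with Proposition~\ref{comunbounded}, bounding the $h$-term by \eqref{egcm} and the $f$-term by H\"older, and then minimising in $\|u\|$ to recover the $\mu$-correction of \eqref{clambdamu}---is precisely the one the paper uses. Where you differ from the paper is in how you organise the loss of compactness. The paper invokes Lions' concentration--compactness lemma, obtains Dirac masses $(\eta_i),(\nu_i)$ at points $x_i$, tests $J'_{\lambda,\mu}(u_k)$ against $u_k\psi_{\epsilon,i}$ with cut-offs to get $\nu_i\ge\alpha\eta_i$, and then shows each $\eta_i$ must vanish. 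You instead use a Brezis--Lieb splitting $u_k=u+w_k$ and test $J'_{\lambda,\mu}(u_k)$ against $u_k-u$, obtaining the single identity $D=M_\infty m$ and the lower bound $D\ge (SM_\infty)^{p/(p-2)}$; your monotonicity computation for $g(x)=\tfrac14 x^{p/(p-2)-1}+(\tfrac14-\tfrac1p)x^{p/(p-2)}$ on $[1,\infty)$ then replaces the paper's direct substitution $\eta_{i_0}\ge(\alpha^2S^p)^{1/(p-2)}$, $\nu_{i_0}\ge(\alpha S)^{p/(p-2)}$, and both routes land on the same lower bound $(\tfrac12-\tfrac1p)(\alpha S)^{p/(p-2)}$.

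Your approach has the mild advantage that the single quantity $m=\lim_k\|u_k-u\|^2$ absorbs both possible failure modes (concentration at the origin/finite points and escape to infinity) without having to discuss them separately, whereas the paper's Lions-type argument, as written, treats only atomic concentration and does not explicitly address potential loss of mass at infinity before concluding $\int|x|^{-pb}|u_k|^p\to\int|x|^{-pb}|u|^p$. On the other hand, the paper's argument avoids the somewhat delicate step of identifying the limit of $\int|x|^{-pb}|u_k|^{p-2}u_k(u_k-u)$, which you handle correctly via a.e.\ convergence plus boundedness of $|u_k|^{p-2}u_k$ in the weighted dual $L^{p/(p-1)}$.
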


\begin{proof}
Since $(u_k)$ is bounded in $\mathcal{D}_{a}^{1,2}(\mathbb R^N)$ we have, module a subsequence, that
\begin{gather*}
 u_{k}\rightharpoonup u \quad \text{in }  \mathcal{D}_{a}^{1,2}(\mathbb R^N), \\
u_{k}(x)\to u(x) \quad \text{ a.e. in } \mathbb R^N,
\end{gather*}
as $k\to +\infty$.
Moreover, using the concentration-compactness principle due to Lions, we obtain an at most countable index set $I$, sequences $(x_i) \subset \mathbb{R}^N$,\ $(\eta_i), (\nu_i) \subset (0,\infty)$, and finite measures $\eta, \nu$ such that
\begin{equation}\label{CCcon}
|x|^{-2a}|\nabla u_k|^2 \rightharpoonup |x|^{-2a}|\nabla u|^2 + \eta
\quad\text{and}\quad
 |x|^{-pb}|u_k|^{p} \rightharpoonup |x|^{-pb}|u|^{p}+ \nu,
\end{equation}
in the sense of measures, as $k\to +\infty$, where
\begin{equation*}
\nu  =  \sum_{i \in I}\nu_{i}\delta_{x_{i}},\quad
\eta\geq \sum_{i \in I}\eta_{i}\delta_{x_{i}},\quad
{S} \nu_{i}^{2/p}\leq \eta_{i},
\end{equation*}
for all $i \in I$ and $\delta_{x_i}$, the Dirac mass at $x_i$.
Our goal is to show that $I$ is empty. Suppose not. Then, for any $i\in I$, we can consider smooth cut-off functions $\psi_{\epsilon, i}(x)$, centered at $x_i$, such that $0\leq \psi_{\epsilon, i}(x)\leq 1$, $\psi_{\epsilon, i}(x)=1$ in $B_{\frac{\epsilon}{2}}(x_i)$, $\psi_{\epsilon, i}(x)=0$ in $B^c_{\epsilon}(x_i)$,  and $|\nabla \psi_{\epsilon, i}(x)|\leq {C}/{\epsilon}$. Then $\{u_k\psi_{\epsilon, i}\}$ is bounded in $D^{1, 2}_a(\mathbb R^N)$ and has compact support. By using the boundedness of the sequence, we have that
\begin{align}\label{flim}
\displaystyle \lim_{\epsilon\rightarrow 0}\lim_{k\rightarrow\infty}\int_{\mathbb R^N}f(x)|u_k|^{q}\psi_{\epsilon,i}dx=0.
\end{align}

So, using \eqref{flim}, we estimate {$\langle J_{\lambda, \mu}'(u_k), (u_k\psi_{\epsilon, i})\rangle_*$} as follows:
\begin{align*}
0&=\displaystyle\lim_{\epsilon\rightarrow 0}\displaystyle \lim_{k\rightarrow \infty} \langle J^\prime_{\lambda, \mu}(u_k), \psi_{\epsilon,i}u_k\rangle\\
&=\displaystyle\lim_{\epsilon \rightarrow 0}\displaystyle \lim_{k\rightarrow \infty}\left\{(\alpha+\beta \|u_k\|^2)\int_{\mathbb R^N}|x|^{-2a}\nabla u_k \nabla (\psi_{\epsilon, i}u_k)dx-\lambda\int_{\mathbb R^N}h(x)|x|^{-2(1+a)}|u_k|^2\psi_{\epsilon,i}dx-\int_{\mathbb R^N}|x|^{-pb}|
u_k|^{p}\psi_{\epsilon,i}dx\right\}\,,
\end{align*}
which implies
\begin{align*}
0&=\displaystyle\lim_{\epsilon \rightarrow 0}\displaystyle \lim_{k\rightarrow \infty}\left\{\alpha\int_{\mathbb R^N}|x|^{-2a}|\nabla u_k|^2 \psi_{\epsilon, i}dx+\alpha \int_{\mathbb R^N}|x|^{-2a}u_k\nabla u_k \nabla \psi_{\epsilon, i}dx\right.\\&+\beta\|u_k\|^2 \int_{\mathbb R^N}|x|^{-2a}|\nabla u_k |^2 \psi_{\epsilon, i}dx+
\beta \|u_k\|^2 \int_{\mathbb R^N}|x|^{-2a}u_k \nabla u_k \nabla \psi_{\epsilon, i}dx
\\ &\left.-\lambda\int_{\mathbb R^N}h(x)|x|^{-2(1+a)}|u_k|^2\psi_{\epsilon,i}dx-\int_{\mathbb R^N}|x|^{-pb}|u_k|^{p}\psi_{\epsilon,i}dx\right\}.\\
\end{align*}
Now, using
\begin{align*}
0&\leq\displaystyle\lim_{\epsilon \rightarrow 0}\displaystyle \lim_{k\to\infty} \left|\int_{\mathbb R^N}|x|^{-2a}u_k\nabla u_{k}\nabla \psi_{\epsilon, i} dx \right|\\
&\leq \displaystyle\lim_{\epsilon \rightarrow 0}\displaystyle \lim_{k\to
\infty}\left(\int_{\mathbb R^N}|x|^{-2a}|\nabla u_{k}|^{2}dx\right)^{1/2}
\left(\int_{B_{\epsilon}(x_i)}|x|^{-2a}|\nabla
\psi_{\epsilon, i}|^{2}|u_{k}|^{2}dx\right)^{1/2}\longrightarrow 0\,,
\end{align*}
 we get
 \begin{align*}
0&\geq\displaystyle\lim_{\epsilon \rightarrow 0}\displaystyle \lim_{k\rightarrow \infty}\left\{\alpha\int_{\mathbb R^N}|x|^{-2a}|\nabla u_k|^2 \psi_{\epsilon, i}dx+\beta\|u_k\|^2 \int_{\mathbb R^N}|x|^{-2a}|\nabla u_k |^2 \psi_{\epsilon, i}dx\right.\\&\left.-\lambda\int_{\mathbb R^N}h(x)|x|^{-2(1+a)}|u_k|^2\psi_{\epsilon,i}dx-\int_{\mathbb R^N}|x|^{-pb}|u_k|^{p}\psi_{\epsilon,i}dx\right\}\,.
\end{align*}
Next, using the compact support of $\psi_{\epsilon, i}$ and the local compactness of the sequence  {together with integrability assumption on $h(x)$ in suitable weighted integrable spaces}, we can show that
\[
\displaystyle\lim_{\epsilon \rightarrow 0}\displaystyle \lim_{k\rightarrow \infty} \displaystyle \int_{\mathbb R^N}h(x)|x|^{-2(1+a)}|u_k|^2\psi_{\epsilon,i}dx=0\,.
\]
Hence

\begin{align*}
0&\geq\displaystyle\lim_{\epsilon \rightarrow 0}\displaystyle \lim_{k\rightarrow \infty}\left\{\alpha\int_{\mathbb R^N}|x|^{-2a}|\nabla u_k|^2 \psi_{\epsilon, i}dx-\int_{\mathbb R^N}|x|^{-pb}|u_k|^{p}\psi_{\epsilon,i}dx\right\}\\
&\geq \alpha \eta_i-\nu_i \,,
\end{align*}
which implies $\nu_i\geq {\alpha \eta_i}$. From the relation $\eta_i\geq {S}\nu_i^{\frac{2}{p}}$ it follows that $\eta_i\geq \left({\alpha^2  S^p}\right)^\frac{1}{p-2}$ or $\eta_i=0$. We claim that $\eta_i\geq \left({\alpha^2  S^p}\right)^\frac{1}{p-2}$ is not possible to hold. We argue by contradiction. Suppose
 \begin{align}\label{lereq}
 \eta_i\geq \left({\alpha^2  S^p}\right)^\frac{1}{p-2}.
\end{align}
{Now,
\begin{align*}
c&=\displaystyle\lim_{k\rightarrow \infty} \left\{ J_{\lambda, \mu}(u_k)-\frac{1}{4}\langle  J_{\lambda, \mu}^{\prime}(u_k), u_k\rangle_*\right\}\\&=\lim_{k\to \infty}
\left\{\frac14 \alpha\|u_k\|^2 -\frac{\lambda}{4}\int_{\mathbb R^N}h(x)|x|^{-2(1+a)}|u_k|^2dx +\left(\frac14-\frac1p\right)\int_{\mathbb R^N}|x|^{-pb}|u_k|^pdx-\mu\left(\frac{1}{q}-\frac{1}{4}\right)\int_{\mathbb R^N}f(x)|u_k|^q dx\right\}
\end{align*}
Now using Proposition \ref{comunbounded} and \eqref{CCcon}, we get
\begin{align*}
c=&\geq \frac{1}{4}\alpha\left(\|u\|^2+\sum_{i\in I}\eta_i\delta_{x_i}\right)-\frac{\lambda}{4}\int_{\mathbb R^N}h(x)|x|^{-2(1+a)}|u|^2dx+ \left(\frac{1}{4}-\frac{1}{p}\right)\left(\|u\|_{L^p_b}^{p}+\sum_{i\in I}\nu_i\delta_{x_i}\right)-\mu \left(\frac{1}{q}-\frac{1}{4}\right)\int_{\mathbb R^N}f(x)|u|^qdx\\
\end{align*}
 Using \eqref{lereq} and $\lambda<\alpha \lambda_1(h)$,  we have}
\begin{align*}
c&\geq \frac14 \alpha\eta_{i_0}+\left(\frac{1}{4}-\frac{1}{p}\right)\nu_{i_0}+\frac{1}{4}\left(\alpha\|u\|^2
-{\lambda}\int_{\mathbb R^N}h(x)|x|^{-2(1+a)}|u|^2dx\right)-\mu \left(\frac{1}{q}-\frac{1}{4}\right)\|f\|_o S^{\frac{-q}{2}}\|u\|^q\\
&\geq \frac14 \alpha\eta_{i_0}+\left(\frac{1}{4}-\frac{1}{p}\right)\nu_{i_0}+\frac{1}{4}\alpha\delta(\lambda)\|u\|^2-\mu \left(\frac{1}{q}-\frac{1}{4}\right)\|f\|_o S^{\frac{-q}{2}}\|u\|^q
\\&\geq \left(\frac{1}{2}-\frac{1}{p}\right)\left({\alpha  S}\right)^\frac{p}{p-2}-\mu^\frac{2}{2-q}\left(\frac{(4-q)\|f\|_o S^\frac{-q}{2}}{4q}\right)^\frac{2}{2-q} \left(\frac{2-q}{2}\right)\left(\frac{2q}{\alpha\delta(\lambda)}\right)^\frac{q}{2-q},
\end{align*}
which is a contradiction. Note that the last term in the above inequality is a consequence of the maximum value of the algebraic expression

\begin{equation*}
\frac{1}{4}\alpha\delta(\lambda)t^2-\mu \left(\frac{1}{q}-\frac{1}{4}\right)\|f\|_o S^{\frac{-q}{2}}t^q.
\end{equation*}
Hence $I$ is empty and we can conclude that $$\displaystyle \int_{\mathbb R^N}|x|^{-pb}|u_k|^{p} dx\rightarrow \displaystyle \int_{\mathbb R^N}|x|^{-pb}|u|^{p} dx.$$
Hence the proof follows.

\end{proof}

\section{{Existence of first solution in $N^+_{\lambda, \mu}$: Proof of Theorem \ref{22mht1} (i)}}
Let us fix
 \[
\mu_4= \left(\left(\frac{1}{2}-\frac{1}{p}\right)\left({\alpha S}\right)^\frac{p}{p-2}\left(\frac{(4-q)\|f\|_o S^\frac{-q}{2}}{4q}\right)^\frac{2}{q-2} \left(\frac{2}{2-q}\right)\left(\frac{2q}{\alpha\delta(\lambda)}\right)^\frac{q}{q-2}\right)^\frac{2-q}{2}
\]
so that $c_{\lambda, \mu}>0$. Set $\mu_0=\min\{\mu_1, \mu_2, \mu_4\}$.
Now as the functional is bounded below in $N_{\lambda, \mu}$, we minimize  $J_{\lambda, \mu}$
 in $N_{\lambda, \mu}$ and using Proposition \ref{prp1} (1), we get a minimizing Palais-Smale sequence $\{u_k\}$ such that $J_{\lambda, \mu}(u_k)\rightarrow \theta_{\lambda, \mu}$.  It is  routine to verify that the sequence is bounded in $D^{1, 2}_a(\mathbb R^N).$ Hence we can assume a weak limit $u_0$ of the sequence in $D^{1, 2}_a(\mathbb R^N)$. Now from Lemma \ref{3a} we know that $\theta_{\lambda, \mu}<0$ hence using compactness result as in  Proposition \ref{crcmpi}, we get the minimizer $u_0$ of  $J_{\lambda, \mu}$ in $N_{\lambda, \mu}$ for $\lambda\in(0, \alpha\lambda_1(h))$ and $\mu \in (0, \mu_0)$ with $J_{\lambda, \mu}(u_0)<0$. Now we claim that $u_0 \in N_{\lambda, \mu}^+$ for $\mu\in (0,\mu_0)$.
 If not then $u_0\in N_{\lambda, \mu}^-$ as $ N_{\lambda, \mu}^0=\emptyset$ . Note that using $u_0\in N_{\lambda, \mu}$ and $J_{\lambda, \mu}(u_0)<0$ we get $u_0 \in F^+$. Therefore from fibering map analysis, we get unique positive real numbers $t^-(u_0)>t^+(u_0)>0$ such that $t^-u_0\in  N_{\lambda, \mu}^-$ and $t^+u_0\in N_{\lambda, \mu}^+$. By uniqueness, $t^-$ must be equal to 1 as $u_0\in N_{\lambda, \mu}^-$ by our contrary assumption which implies $t^+<1$. Therefore we can find $t_0\in (t^+, t^-)$ such that
\[
 J_{\lambda, \mu}(t^+u_0)=\displaystyle\min_{0\leq t\leq t^-} J_{\lambda, \mu}(tu_0)<J_{\lambda, \mu}(t_0u_0)< J_{\lambda, \mu}(t^-u_0)=J_{\lambda, \mu}(u_0)=\theta_{\lambda, \mu}
 \]
 which is a contradiction of the fact that $u_0$ is a minimizer of $J_{\lambda, \mu}$ in $N_{\lambda, \mu}$. Hence $u_0\in N_{\lambda, \mu}^+.$ Since $J_{\lambda, \mu}(u)=J_{\lambda, \mu}(|u|)$, we can assume that $u_0\geq0$. Now using the fact that $M(t)>\alpha$ and by classical maximum principle,  we get $u_0>0$.
 \noindent Now the following Lemma shows that $u_0$ is indeed a local minimizer of $J_{\lambda, \mu}$ in $D^{1, 2}_a(\mathbb R^N)$.
 \begin{lem}
The function $u_0 \in N_{\lambda, \mu}^+$ is a local minimum of $J_{\lambda, \mu}$ in $D^{1, 2}_a(\mathbb R^N)$ for $\mu<\mu_0$ and $\lambda \in (0, \alpha\lambda_1(h))$.
\end{lem}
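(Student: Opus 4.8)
The plan is to show that $u_0$ minimizes $J_{\lambda,\mu}$ not merely on the constraint set $N_{\lambda,\mu}^+$ but in a full $D^{1,2}_a(\mathbb R^N)$-neighborhood of $u_0$. The natural strategy is a contradiction/compactness argument exploiting the fibering-map structure: since $u_0 \in N_{\lambda,\mu}^+$, the map $t \mapsto \Phi_{u_0}(t) = J_{\lambda,\mu}(tu_0)$ has a strict local minimum at $t=1$, and moreover (because $u_0 \in F^+$, being a negative-energy point) $t=1 = t^+(u_0)$ is the \emph{global} minimum of $\Phi_{u_0}$ on $(0, t^-(u_0))$. The key structural fact I would first establish is that there is $\delta>0$ such that for every $v \in D^{1,2}_a(\mathbb R^N)$ with $\|v\| = 1$ and every $t \in (0,\delta)$ one has $u_0 + tv \in F^+$ (so that the fibering analysis of Case 1 applies to each such direction), and the number $t^+(u_0+tv)$ depends continuously on $(t,v)$ with $t^+(u_0 + tv) \to t^+(u_0) = 1$ as $t \to 0$. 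This continuity comes from the implicit function theorem applied to $\psi_{w}'(t^+(w)) = 0$ (the function $\psi_w$ from \eqref{phidef}), using that $\psi_w''(t^+(w)) \neq 0$ since $t^+(w) < t_{\max}(w)$ strictly on $N_{\lambda,\mu}^+$.

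The argument then runs as follows. Suppose, for contradiction, that $u_0$ is not a local minimum in $D^{1,2}_a$; then there is a sequence $w_k \to u_0$ in $D^{1,2}_a(\mathbb R^N)$ with $J_{\lambda,\mu}(w_k) < J_{\lambda,\mu}(u_0) = \theta_{\lambda,\mu}$. Writing $w_k = u_0 + t_k v_k$ with $\|v_k\|=1$ and $t_k \to 0^+$, for $k$ large we have $w_k \in F^+$, so by Case 1 there is $s_k := t^+(w_k) > 0$ with $s_k w_k \in N_{\lambda,\mu}^+$ and, by the continuity statement above, $s_k \to 1$. Hence $s_k w_k \to u_0$ as well, and
\begin{equation*}
\theta_{\lambda,\mu} \le J_{\lambda,\mu}(s_k w_k) = \min_{0 < t < t^-(w_k)} J_{\lambda,\mu}(t w_k) \le J_{\lambda,\mu}(w_k) < \theta_{\lambda,\mu},
\end{equation*}
where the middle equality is exactly the global-minimum property of the fibering map on $F^+$, and the penultimate inequality holds because $1 < t^-(w_k)$ for $k$ large (again by continuity, $t^-(w_k) \to t^-(u_0) > 1$, using $u_0 \in N_{\lambda,\mu}^+$ so $1 = t^+(u_0) < t^-(u_0)$). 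This is the desired contradiction.

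A cleaner variant avoids the explicit sequence and instead combines the continuity of $w \mapsto t^+(w)$ near $u_0$ with the observation that $J_{\lambda,\mu}(t^+(w)\, w) \le J_{\lambda,\mu}(w)$ for all $w \in F^+$ close to $u_0$ (since $t^+(w)w$ is the minimizer of $\Phi_w$ over $(0, t^-(w))$ and $1 \in (0, t^-(w))$ for such $w$), together with $J_{\lambda,\mu}(t^+(w)w) \ge \theta_{\lambda,\mu}^+ = \theta_{\lambda,\mu}$ on $N_{\lambda,\mu}^+$, to conclude $J_{\lambda,\mu}(w) \ge \theta_{\lambda,\mu} = J_{\lambda,\mu}(u_0)$ on a neighborhood of $u_0$.

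I expect the main obstacle to be the continuity of the maps $w \mapsto t^\pm(w)$ near $u_0$, and in particular verifying that $w \in F^+$ persists in a full neighborhood: one must check that $\int f(x)|u_0|^q\,dx > 0$ is actually \emph{strict} (which follows since $u_0 \in F^+$ with $\int f|u_0|^q dx = \mu^{-1}\psi_{u_0}(1) > 0$ because $t^+(u_0)=1 < t_{\max}$ and $\psi_{u_0}$ is increasing up to $t_{\max}$ with $\psi_{u_0}(0)=0$), and then that $w \mapsto \int f(x)|w|^q\,dx$ is continuous on $D^{1,2}_a(\mathbb R^N)$ — which is exactly the content of Proposition \ref{comunbounded} (weak, hence norm, continuity of that functional). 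With $F^+$-persistence in hand, the nondegeneracy $\psi_w''(t^+(w)) \neq 0$ on $N_{\lambda,\mu}^+$ lets the implicit function theorem deliver the required continuous dependence, and the rest is the fibering-map bookkeeping sketched above.
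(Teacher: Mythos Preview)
Your approach is essentially the same as the paper's: project a nearby point $w$ onto $N_{\lambda,\mu}^+$ via the fibering map, use that the projected point has energy $\ge \theta_{\lambda,\mu}=J_{\lambda,\mu}(u_0)$, and use that $t^+(w)$ is the global minimizer of $t\mapsto J_{\lambda,\mu}(tw)$ on $(0,t^-(w))\ni 1$ to compare with $J_{\lambda,\mu}(w)$. The paper packages the continuity of $w\mapsto t^+(w)$ through Lemma~\ref{ziii} (the $\xi$ map) together with continuity of $w\mapsto t_{\max}(w)$, whereas you invoke the implicit function theorem directly and the continuity of $t^-$; these are interchangeable.

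One small slip to fix: the implicit function theorem is \emph{not} applied to $\psi_w'(t^+(w))=0$ --- indeed $\psi_w'(t^+(w))>0$ since $t^+(w)<t_{\max}(w)$. The defining equation for $t^+$ is $\psi_w(t)=\mu\int f|w|^q\,dx$ (equivalently $\Phi_w'(t)=0$), and the nondegeneracy you need is precisely $\psi_w'(t^+(w))\neq 0$ (equivalently $\Phi_w''(t^+(w))>0$), which your parenthetical ``since $t^+(w)<t_{\max}(w)$'' does justify. With that correction the argument is sound and matches the paper's.
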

\begin{proof}
Since $u_0 \in N_{\lambda, \mu}^{+}$, we have $t^+(u_0)=1
<t_{\max}(u_0)$. Hence by continuity of $u \mapsto t_{\max}(u)$, given
$\epsilon>0$, there exists $\delta=\delta(\epsilon)>0$ such that $1+\epsilon< t_{\max}(u_0-u)$
for all $\|u\|<\delta$. Also, from Lemma \ref{ziii}, for $\delta_1>0$, we obtain a $C^1$ map $\xi: \mathcal {B}(0,\delta)\rightarrow \mathbb R^+$
such that $\xi(u)(u_0-u)\in  N_{\lambda, \mu}^+$, $\xi(0)=1$. Therefore, for
$0<\delta_2=\min \{\delta, \delta_1\}$ and uniqueness of zeros of fibering map, we have $t^+(u_0-u)=
\xi(u)<1+\epsilon<t_{\max}(u_0-u)$ for all $\|u\|<\delta_2$. Since $t_{\max}(u_0-u)>1$, then for all $\|u\|<\delta_2$,
we obtain $J_{\lambda, \mu}(u_0)\leq J_{\lambda, \mu}(t^+(u_0-u)(u_0-u))\leq J_{\lambda, \mu}(u_0-u)$. This shows that $u_0$ is a local minimizer for
$J_{\lambda, \mu}$ in $D^{1, 2}_a(\mathbb R^N)$.
\end{proof}

\section{Existence of second solution in $ N_{\lambda, \mu}^-$}
{In this section, we study the pivotal estimates on minimizers of Caffarelli-Kohn-Nirenberg inequality which helps to establish an energy estimate in $N_{\lambda, \mu}^-$,
eventually leading us to the existence of second solution in $ N_{\lambda, \mu}^-$.}
It is well known (see \cite{BW,CatF,  MR1223899} ) that the minimizer of the following minimization problem
$$
 S=\displaystyle \inf_{u\in D^{1, 2}_a(\mathbb R^N)\setminus \{0\}}\frac{\int_{\mathbb R^N} |x|^{-2a}|\nabla u|^2dx}{\left(\int_{\mathbb R^N}|x|^{-pb}|u|^p dx\right)^\frac2p}
$$
is given by
$$
U_\epsilon(x)=\frac{\left(2pA\epsilon\right)^\frac{1}{p-2}}{\left(\epsilon + |x|^\frac{(p-2)\gamma}{2}\right)^\frac{2}{p-2}},
$$
where $A=\left(\frac{N-2}{2}-a\right)^2$ and $\gamma=2\sqrt{A}$. 
 Moreover, $U_\epsilon$ is a weak solution of the  the following problem
$$
\mathrm {div}(|x|^{-2a}\nabla U_\epsilon)=|x|^{-pb}|U_\epsilon|^{p-2}U_\epsilon\;\;  \text{in } \mathbb R^N,
$$
and satisfies
$$
\int_{\mathbb R^N} |x|^{-2a}|\nabla U_\epsilon|^2dx=\int_{\mathbb R^N}|x|^{-pb}|U_\epsilon|^p dx.
$$
In the following proposition, we prove some crucial estimates for extremal functions $U_\epsilon$, needed to do the energy level analysis of $J_{\lambda, \mu}$ in $N_{\lambda, \mu}^-$. 
{
\begin{pro}
	Let $\eta: \mathbb R^N\to \mathbb R^+\cup \{0\}$ be a $C_c^\infty(\mathbb R^N)$ function such that $0\leq \eta\leq 1$, $|\nabla \eta|\leq C$ and $\eta=1$ in $|x|<r$, $\eta=0$ for $|x|>2r$. Then we have the following estimates for the  family $\{\eta U_\epsilon\}$:
 \begin{equation}\label{eqstimatescri}
 \begin{aligned}
 \|\eta U_\epsilon\|^2&=\|U_\epsilon\|^2+O(\epsilon^\frac{2}{p-2})= S^\frac{p}{p-2}+O(\epsilon^\frac{2}{p-2})\\
 \|\eta U_\epsilon|^{p}_{L_b^{p}(\mathbb R^N)}&=\|U_\epsilon|^{p}_{L_b^{p}(\mathbb R^N)}+O(\epsilon^\frac{p}{p-2})= S^\frac{p}{p-2}+O(\epsilon^\frac{p}{p-2})\\
 \frac{\|\eta U_\epsilon\|^2}{\left( \int_{\mathbb R^N}|x|^{-pb}|\eta U_{\epsilon}|^{p}dx\right)}&= S^\frac{p}{p-2}+O(\epsilon^\frac{2}{p-2})\\
\|\eta U_\epsilon\|^{p-1}_{L^{p-1}(\mathbb R^N)}&\geq C\epsilon^\frac{1}{p-2},
  \end{aligned}
 \end{equation}
 \end{pro}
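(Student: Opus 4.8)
\noindent The unifying observation is that $\eta U_\epsilon$ coincides with $U_\epsilon$ on $B_r$, so every difference of the form $\int\Phi(\eta U_\epsilon)-\int\Phi(U_\epsilon)$ collapses to an integral over $\{|x|>r\}$, a region in which one has the explicit decay $U_\epsilon(x)=O(\epsilon^{1/(p-2)}|x|^{-\gamma})$ and $|\nabla U_\epsilon(x)|=O(\epsilon^{1/(p-2)}|x|^{-\gamma-1})$ with $\gamma=2\sqrt A=N-2-2a$. The role of the hypothesis $0\le a<b<a+1<N/2$ is precisely to make the weighted tail integrals that arise convergent at infinity.

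For the first identity I would expand $\|\eta U_\epsilon\|^2=\int_{\mathbb R^N}|x|^{-2a}|\eta\nabla U_\epsilon+U_\epsilon\nabla\eta|^2\,dx$ and subtract $\|U_\epsilon\|^2=\int_{\mathbb R^N}|x|^{-2a}|\nabla U_\epsilon|^2\,dx$. The three resulting terms are all supported in $\{|x|>r\}$, since $\eta^2-1$ vanishes on $B_r$ and $\nabla\eta$ is supported in $\{r<|x|<2r\}$; the leading one is bounded by $C\epsilon^{2/(p-2)}\int_{|x|>r}|x|^{-2a-2\gamma-2}\,dx$, which is finite because $2a+2\gamma+2>N$ (equivalently $\gamma>0$, i.e. $a<(N-2)/2$), while the term carrying $U_\epsilon^2|\nabla\eta|^2$ and the mixed term are handled the same way on the bounded annulus, with Cauchy--Schwarz for the latter. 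This gives $\|\eta U_\epsilon\|^2=\|U_\epsilon\|^2+O(\epsilon^{2/(p-2)})$, and the value $\|U_\epsilon\|^2=S^{p/(p-2)}$ follows from $S=\|U_\epsilon\|^2/(\int_{\mathbb R^N}|x|^{-pb}|U_\epsilon|^p\,dx)^{2/p}=(\|U_\epsilon\|^2)^{(p-2)/p}$ combined with the two identities for $U_\epsilon$ recalled just above. The second identity is obtained identically, with $\|\eta U_\epsilon\|_{L^p_b}^p-\|U_\epsilon\|_{L^p_b}^p=\int_{\mathbb R^N}|x|^{-pb}(\eta^p-1)|U_\epsilon|^p\,dx$ dominated by $C\epsilon^{p/(p-2)}\int_{|x|>r}|x|^{-pb-p\gamma}\,dx$, whose finiteness is again equivalent to $\gamma>0$. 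The third line then follows formally from the first two by division together with a one-term expansion of $t\mapsto t^{2/p}$ at $t=S^{p/(p-2)}$, using that $p>2$ makes $\epsilon^{p/(p-2)}=o(\epsilon^{2/(p-2)})$.

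For the last, lower, estimate I would drop the exterior contribution and use $\eta\equiv 1$ on $B_r$ to get $\int_{\mathbb R^N}|\eta U_\epsilon|^{p-1}\,dx\ge\int_{B_r}U_\epsilon^{p-1}\,dx$. Inserting the explicit formula and rescaling $x=\epsilon^{2/((p-2)\gamma)}y$ turns the right-hand side into $\epsilon^{\kappa}\int_{B_\rho}(1+|y|^{(p-2)\gamma/2})^{-2(p-1)/(p-2)}\,dy$ with $\rho=r\,\epsilon^{-2/((p-2)\gamma)}\to\infty$ and $\kappa$ an explicit exponent; depending on whether the limiting profile is integrable at infinity, the $y$-integral either converges to a positive constant or grows like a power of $\rho$ (which feeds back an extra power of $\epsilon$), and in either case a short computation shows that the net power of $\epsilon$ does not exceed $(p-1)/(p-2)$. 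Hence $\int_{\mathbb R^N}|\eta U_\epsilon|^{p-1}\,dx\ge C\epsilon^{(p-1)/(p-2)}$, that is $\|\eta U_\epsilon\|_{L^{p-1}(\mathbb R^N)}\ge C\epsilon^{1/(p-2)}$, for $\epsilon$ small.

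The one step that calls for genuine care is the exponent bookkeeping in these last two estimates: one must verify that each weighted integral over $\{|x|>r\}$, and the rescaled profile integral, sits in the correct convergence regime, which is exactly where the full strength of $0\le a<b<a+1<N/2$ is consumed. The first two estimates are otherwise routine truncation computations and the third is purely algebraic.
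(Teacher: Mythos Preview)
Your argument for the first three lines is essentially the paper's: expand, observe that every correction term is supported in $\{|x|>r\}$ (or the annulus $\{r<|x|<2r\}$), and bound using the explicit decay $U_\epsilon(x)=O(\epsilon^{1/(p-2)}|x|^{-\gamma})$, $|\nabla U_\epsilon(x)|=O(\epsilon^{1/(p-2)}|x|^{-\gamma-1})$. The only cosmetic difference is that you subtract $\|U_\epsilon\|^2$ and bound the full signed difference, whereas the paper's displayed computation records only the upper bound $\|\eta U_\epsilon\|^2\le I_1+I_2+I_3$; your version is cleaner and gives both directions at once. The third line is, as you say, purely algebraic.

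For the fourth line the paper gives no proof at all, only a pointer to Lemma~3.8 of Barrios--Colorado--de~Pablo--S\'anchez, so your rescaling $x=\epsilon^{2/((p-2)\gamma)}y$ with the case split on integrability of the profile is more than what the paper supplies, and it is carried out correctly. There is, however, a discrepancy you should flag: your computation yields
\[
\int_{\mathbb R^N}|\eta U_\epsilon|^{p-1}\,dx\;\ge\;C\,\epsilon^{(p-1)/(p-2)},\qquad\text{i.e.}\quad \|\eta U_\epsilon\|_{L^{p-1}}\ge C\,\epsilon^{1/(p-2)},
\]
whereas the proposition, read literally, asks for $\|\eta U_\epsilon\|_{L^{p-1}}^{p-1}\ge C\,\epsilon^{1/(p-2)}$. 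In fact your rescaling exponent is
\[
\kappa=\frac{2N}{(p-2)\gamma}-\frac{p-1}{p-2}=\frac{1}{p-2}+\frac{2pb}{(p-2)\gamma},
\]
which is strictly larger than $1/(p-2)$ whenever $b>0$, so the \emph{unweighted} integral genuinely behaves like $\epsilon^\kappa$ and cannot satisfy the stated bound. The exponent $1/(p-2)$ is recovered exactly if one inserts the weight $|x|^{-pb}$ in the integral, and that weighted form is precisely what appears in the application (the term $\int|x|^{-pb}u_0|U_{\epsilon,\eta}|^{p-1}\,dx$ in the next lemma). So your proof is correct for what it actually proves; it simply exposes that the fourth line, as printed, is missing the weight (or has the wrong exponent).
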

\begin{proof}
	The above estimates follow the classical steps of estimation as in Brezis-Nirenberg, \cite{BN1}. We give a brief sketch of the steps. .
	\begin{align*}
	\int_{\mathbb R^N}&|x|^{-2a}|\nabla (\eta U_{\epsilon})|^{2}dx=	\int_{\mathbb R^N}|x|^{-2a}|\eta \nabla U_{\epsilon}dx+U_{\epsilon} \nabla \eta|^{2}dx\\&\leq \int_{\mathbb R^N}|x|^{-2a}\eta^2 |\nabla U_{\epsilon}|^2dx+\int_{\mathbb R^N}|x|^{-2a}U_{\epsilon}^2 |\nabla \eta|^{2}dx+2\int_{\mathbb R^N}|x|^{-2a}\eta|\nabla \eta| U_{\epsilon}|\nabla U_{\epsilon}|dx\\&
	\leq \int_{\mathbb R^N}|x|^{-2a} |\nabla U_{\epsilon}|^2+C^2\int_{\{r<|x|<2r\}}|x|^{-2a}U_{\epsilon}^2 dx+2C\int_{\{r<|x|<2r\}}|x|^{-2a} U_{\epsilon}|\nabla U_{\epsilon}|dx\\&=I_1+I_2+I_3
	\end{align*}
	where $I_1=S^\frac{p}{p-2}$. Now by direct computations one can see that $I_2=O(\epsilon^\frac{2}{p-2})$  as follows.
\begin{align*}
\int_{\{r<|x|<2r\}}|x|^{-2a}U_{\epsilon}^2 dx&\leq r^{-2a}(2pA)^\frac{2}{p-2}\epsilon^\frac{2}{p-2} \int_{\{r<|x|<2r\}} \frac{dx}{\left(\epsilon + |x|^\frac{(p-2)\gamma}{2}\right)^\frac{4}{p-2}}\\
&\leq C(a, N ) \epsilon^\frac{2}{p-2} \int_{r}^{2r} \frac{s^{N-1}ds}{\left(\epsilon + s^\frac{(p-2)\gamma}{2}\right)^\frac{4}{p-2}}\\&\leq C(a, N ) \epsilon^\frac{2}{p-2} \int_{r}^{2r} s^{N-1-2\gamma} ds\leq C  \epsilon^\frac{2}{p-2}.
\end{align*}
Similarly one can show that $I_3=O(\epsilon^\frac{2}{p-2})$. Now we prove the second estimate of \eqref{eqstimatescri}.
\begin{align*}
\int_{\mathbb R^N}|x|^{-pb}|\eta U_\epsilon|^p dx&=\int_{\{|x|<2r\}}|x|^{-pb}| \eta U_\epsilon|^p dx\\
&=\int_{\{|x|<2r\}}|x|^{-pb}| U_\epsilon|^p dx+\int_{\{|x|<2r\}}|x|^{-pb}(|\eta|^p-1) |U_\epsilon|^p dx
\\&\leq \int_{\mathbb R^N}|x|^{-pb}| U_\epsilon|^p dx+\int_{\{r<|x|<2r\}}|x|^{-pb}(|\eta|^p-1) |U_\epsilon|^p dx:=I_4+I_5
\end{align*}
where $I_4=S^\frac{p}{p-2}$ and 
\begin{align*}
I_5\leq r^{-pb}\int_{\{r<|x|<2r\}}|U_\epsilon|^p&\leq  r^{-pb}(2pA)^\frac{p}{p-2}\epsilon^\frac{p}{p-2} \int_{\{r<|x|<2r\}} |x|^{-p\gamma} dx\leq C \epsilon^\frac{p}{p-2}
\end{align*}
Third estimate is trivial from first and second estimate of \eqref{eqstimatescri} and the last estimate of \eqref{eqstimatescri} follows the similar steps as in  Lemma 3.8 of \cite{MR2911424}.
\end{proof}
}
 Using above estimates we have the following Lemma which is crucial while studying the energy of the functional in $N_{\lambda, \mu}^-$. Let us assume, without loss of generality, that $0\in  f^+$.  We take $\rho>0$ small enough such that $\mathcal B_{2\rho}(0)\subset {f^+}$. Consider a smooth test functions 
$\eta$ such that $0\leq \eta(x)\leq 1$
in ${f^+}$,  $\eta(x)=1$ on $\mathcal B_{\rho}(0)$ and $\eta(x)=0$ on $\mathcal B^c_{2\rho}(0)$. Define $U_{\epsilon,\eta}=\eta U_\epsilon\in D^{1, 2}_a(\mathbb R^N)$. Then we have the following technical result
\begin{lem}\label{II}
Let $u_{0}$ be the local minimum for the functional $J_{\lambda, \mu}$ in $D^{1, 2}_a(\mathbb R^N)$. Then for every $r>0$
there exists $\epsilon_{0} = \epsilon_{0}(r, \eta) > 0$ , $\beta_0>0$ and $\mu_5>0$ s.t.
\begin{equation*}
J_{\lambda, \mu}(u_{0}+r\;U_{\epsilon,\eta}) <c_{\lambda, \mu}, \;\;\textrm{for}\;\;\epsilon \in (0,\epsilon_{0}), \beta\in (0, \beta_0), \lambda\in (0, \lambda_1(h)) \;\text{and}\; \mu\in (0, \mu_5), 
\end{equation*}
where $c_{\lambda, \mu}$ is given in \eqref{clambdamu}.
\end{lem}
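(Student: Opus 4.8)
The plan is to estimate $J_{\lambda,\mu}(u_0 + rU_{\epsilon,\eta})$ from above and show it stays strictly below the threshold $c_{\lambda,\mu}$ defined in \eqref{clambdamu}. The idea is the standard Brezis--Nirenberg-type competition: on the one hand, for the pure critical Kirchhoff functional $\sup_{t\ge 0} J^0(tU_{\epsilon,\eta})$ one gains the ``mountain-pass'' level $\bigl(\tfrac12-\tfrac1p\bigr)(\alpha S)^{p/(p-2)}$ up to a lower-order error in $\epsilon$; on the other hand, the presence of the local minimizer $u_0$ with $J_{\lambda,\mu}(u_0)<0$ and the sublinear term $\mu f|u|^q$ only subtract energy, which is exactly the negative $\mu$-dependent term appearing in $c_{\lambda,\mu}$. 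First I would expand
\begin{align*}
J_{\lambda,\mu}(u_0+rU_{\epsilon,\eta})
&=\tfrac12\widehat M\bigl(\|u_0+rU_{\epsilon,\eta}\|^2\bigr)
-\tfrac{\lambda}{2}\!\int_{\mathbb R^N}\! h|x|^{-2(1+a)}|u_0+rU_{\epsilon,\eta}|^2
-\tfrac{\mu}{q}\!\int_{\mathbb R^N}\! f|u_0+rU_{\epsilon,\eta}|^q
-\tfrac1p\!\int_{\mathbb R^N}\!|x|^{-pb}|u_0+rU_{\epsilon,\eta}|^p .
\end{align*}
Using that $u_0$ solves \eqref{Pc} (so the first-order terms in $U_{\epsilon,\eta}$ coming from the quadratic, eigenvalue, and sublinear parts combine into the critical pairing) together with the elementary inequality $|a+b|^p\ge |a|^p+|b|^p+p|a|^{p-1}b$ for $a,b\ge 0$, and the estimates \eqref{eqstimatescri} for $\|\eta U_\epsilon\|^2$, $\|\eta U_\epsilon\|_{L_b^p}^p$ and $\|\eta U_\epsilon\|_{L^{p-1}}^{p-1}$, I would reduce the problem to bounding a one-variable function of $r$ of the shape
\[
g(r)=J_{\lambda,\mu}(u_0)+\tfrac{\alpha}{2}r^2\|U_{\epsilon}\|^2+\tfrac{\beta}{4}r^4\|U_\epsilon\|^4-\tfrac1p r^p\|U_\epsilon\|_{L_b^p}^p
-C\mu\, r^{?}+o(\epsilon^{2/(p-2)}),
\]
the $-C\epsilon^{1/(p-2)}$ gain from the $|u_0|^{p-1}U_{\epsilon,\eta}$ cross term absorbing the $O(\epsilon^{2/(p-2)})$ errors since $\tfrac1{p-2}<\tfrac2{p-2}$.

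Next I would split into the two regimes $\|u_0+rU_{\epsilon,\eta}\|$ small versus bounded away from zero, exactly as one does when $u_0$ is merely a local (not global) minimum. For $r$ near $0$ the continuity of $J_{\lambda,\mu}$ and $J_{\lambda,\mu}(u_0)<0<c_{\lambda,\mu}$ give the estimate for free (uniformly in small $\epsilon$). For $r$ bounded below, I maximize the $r$-dependent expression: the $\beta r^4$ term is handled by taking $\beta\in(0,\beta_0)$ small so that, on the relevant bounded range of $r$, $\sup_{r}\bigl(\tfrac\alpha2 r^2\|U_\epsilon\|^2+\tfrac\beta4 r^4\|U_\epsilon\|^4-\tfrac1p r^p\|U_\epsilon\|_{L_b^p}^p\bigr)\le \bigl(\tfrac12-\tfrac1p\bigr)(\alpha S)^{p/(p-2)}+O(\epsilon^{2/(p-2)})+ (\text{small }\beta\text{ correction})$; here one uses the third line of \eqref{eqstimatescri} so the ratio $\|\eta U_\epsilon\|^2/\|\eta U_\epsilon\|_{L_b^p}^p$ is $S^{p/(p-2)}+O(\epsilon^{2/(p-2)})$. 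The sublinear contribution $-\tfrac\mu q\int f|u_0+rU_{\epsilon,\eta}|^q$ is estimated from above, after discarding the favorable terms, to produce precisely the quantity $-\mu^{2/(2-q)}\bigl(\tfrac{(4-q)\|f\|_o S^{-q/2}}{4q}\bigr)^{2/(2-q)}\bigl(\tfrac{2-q}{2}\bigr)\bigl(\tfrac{2q}{\alpha\delta(\lambda)}\bigr)^{q/(2-q)}$ appearing in \eqref{clambdamu}; I would isolate $\mu_5$ as the threshold below which the algebra works, and take $\epsilon_0$ small enough that all $O(\epsilon^{\cdot/(p-2)})$ terms are dominated.

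The main obstacle, as in every Brezis--Nirenberg argument with a concentrating family, is the cross term: controlling $\int_{\mathbb R^N}|x|^{-pb}\bigl(|u_0+rU_{\epsilon,\eta}|^p-|u_0|^p-r^p|U_{\epsilon,\eta}|^p\bigr)dx$ and the analogous sublinear cross term, and showing the net effect is a gain of order $\epsilon^{1/(p-2)}$ that strictly beats the loss of order $\epsilon^{2/(p-2)}$ — this is where the last estimate $\|\eta U_\epsilon\|_{L^{p-1}}^{p-1}\ge C\epsilon^{1/(p-2)}$ and the positivity $u_0>0$ on a neighborhood of the concentration point $0\in f^+$ are essential. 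A secondary delicate point is keeping the $\beta r^4$ Kirchhoff term under control uniformly, which forces the smallness condition $\beta\in(0,\beta_0)$; one must check $\beta_0$ can be chosen independently of $\epsilon$. Once these are in place, collecting the estimates and choosing $\mu_5\le\min$ of the previously defined thresholds and $\epsilon_0=\epsilon_0(r,\eta)$ accordingly yields $J_{\lambda,\mu}(u_0+rU_{\epsilon,\eta})<c_{\lambda,\mu}$ for $\epsilon\in(0,\epsilon_0)$, $\beta\in(0,\beta_0)$, $\lambda\in(0,\lambda_1(h))$, $\mu\in(0,\mu_5)$.
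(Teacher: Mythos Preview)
Your overall strategy is right and close to the paper's, but there are two concrete gaps that would make the argument fail as written.

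\medskip
\textbf{1. The expansion inequality is too weak.} The inequality $|a+b|^p\ge |a|^p+|b|^p+p|a|^{p-1}b$ you invoke only produces the cross term $p\,r\!\int |x|^{-pb}|u_0|^{p-1}U_{\epsilon,\eta}$, and this is \emph{exactly} cancelled by the first-order term coming from the Euler--Lagrange equation for $u_0$. So it yields no gain. What the paper uses (and what you need) is the stronger inequality
\[
(m+n)^p-m^p-n^p-pm^{p-1}n\ \ge\ C_1\,m\,n^{p-1},\qquad m,n\ge 0,
\]
which produces the additional term $C_1 r^{p-1}\!\int |x|^{-pb}u_0\,|U_{\epsilon,\eta}|^{p-1}$. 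This is the term on which the estimate $\|\eta U_\epsilon\|_{L^{p-1}}^{p-1}\ge C\epsilon^{1/(p-2)}$ acts, together with the positivity and local boundedness of $u_0$ near $0\in f^+$, to give the decisive $-C\epsilon^{1/(p-2)}$. (You identify this mechanism correctly at the end of your proposal, but your stated inequality does not deliver it, and the line attributing the $\epsilon^{1/(p-2)}$ gain to the $|u_0|^{p-1}U_{\epsilon,\eta}$ cross term is backwards.)

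\medskip
\textbf{2. The role of the sublinear term and of $\mu_5$.} You write that the sublinear integral $-\tfrac{\mu}{q}\int f|u_0+rU_{\epsilon,\eta}|^q$ is ``estimated from above \dots\ to produce precisely the quantity'' appearing in $c_{\lambda,\mu}$. That is not how the mechanism works here. In the paper's proof one shows
\[
\int_{\mathbb R^N} f(x)\bigl(|u_0+rU_{\epsilon,\eta}|^q-|u_0|^q-qr|u_0|^{q-1}U_{\epsilon,\eta}\bigr)\,dx>0,
\]
so the entire sublinear cross contribution is \emph{favorable} and is simply discarded; likewise $J_{\lambda,\mu}(u_0)<0$ is discarded. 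After the $\epsilon$-analysis one arrives at
\[
J_{\lambda,\mu}(u_0+rU_{\epsilon,\eta})\ \le\ \Bigl(\tfrac12-\tfrac1p\Bigr)(\alpha S)^{p/(p-2)}+C_3\epsilon^{2/(p-2)}-C\epsilon^{1/(p-2)},
\]
which for $\epsilon<\epsilon_0$ is strictly below $\bigl(\tfrac12-\tfrac1p\bigr)(\alpha S)^{p/(p-2)}$ by a fixed margin. Only \emph{then} does one choose $\mu_5$ small enough that the negative $\mu$-dependent correction in $c_{\lambda,\mu}$ is dominated by that margin. In particular there is no ``$-C\mu r^{?}$'' term to track in your auxiliary function $g(r)$; the $\mu$-dependence enters only at the very last step as the gap one must underbid. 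Your proposed route of manufacturing the $\mu$-term from the sublinear integral would not close, because that integral has already been used (with its first-order part) in the Euler--Lagrange cancellation.

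\medskip
A minor point: rather than splitting into ``$r$ small vs.\ $r$ bounded below'', the paper bounds $J_{\lambda,\mu}(u_0+rU_{\epsilon,\eta})\le J_{\lambda,\mu}(u_0)+\sup_{t\ge 0}g(t)+C\beta$ and shows the maximizer $t_\epsilon$ of $g$ lies in a fixed interval $[t_1,t_2]$ independent of $\epsilon$; this gives the estimate for all $r>0$ at once. The smallness of $\beta$ is imposed so that the quartic Kirchhoff contributions $\beta r^4\|U_{\epsilon,\eta}\|^4$ and $\beta R^4$ are of size $O(\epsilon^{2/(p-2)})$ and can be absorbed by the $-C\epsilon^{1/(p-2)}$ gain (in the paper this is done by coupling $\beta$ to $\epsilon$).
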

\begin{proof}
 Using elementary inequalities we shall estimate the energy from above. We have
\begin{align*}
J_{\lambda, \mu}(u_{0} + r\;U_{\epsilon,\eta})&=\frac{\alpha}{2}\| u_{0} + r\;U_{\epsilon,\eta}\|^{2}-\frac{\lambda}{2} \int_{\mathbb R^N}h(x)|x|^{-2(1+a)}| u_{0} + r\;U_{\epsilon,\eta}|^2 dx + \frac{\beta}{4}\| u_{0} + r\;U_{\epsilon,\eta}\|^{4}\\&\quad 
- \frac{\mu}{q}\int_{\mathbb R^N} f(x)|u_{0}+ r\;U_{\epsilon, \eta}|^{q}dx- \frac{1}{p}\int_{\mathbb R^N} |x|^{-pb} |u_{0} + r\;U_{\epsilon,\eta}|^{{p}}dx\\
&= \frac{\alpha}{2}\| u_{0}\|^{2} + \frac{\alpha}{2}r^{2}\|U_{\epsilon, \eta}\|^{2} + \alpha\;r \langle u_{0}, U_{\epsilon,\eta}\rangle -\frac{\lambda}{2} \int_{\mathbb R^N}h(x)|x|^{-2(1+a)}| u_{0}|^2 dx\\
&\quad -\frac{\lambda r^2}{2} \int_{\mathbb R^N}h(x)|x|^{-2(1+a)}|U_{\epsilon,\eta}|^2 dx-\lambda r \int_{\mathbb R^N}h(x)|x|^{-2(1+a)}u_{0}U_{\epsilon,\eta}dx+ \frac{\beta}{4}\| u_{0}\|^{4}\\&
 \quad + \frac{\beta}{4} r^{4}\|U_{\epsilon, \eta}\|^{4}
 + \;\beta r^{2} \langle u_{0}, U_{\epsilon,\eta}\rangle ^{2}
 + \frac{\beta}{2} r^{2}\|u_{0}\|^{2}\|U_{\epsilon, \eta}\|^{2} + \beta r^{3} \|U_{\epsilon, \eta}\|^{2} \langle u_{0}, U_{\epsilon, \eta}\rangle\\& \quad +\; \beta r \|u_{0}\|^{2}
 \langle u_{0}, U_{\epsilon, \eta}\rangle- \frac{\mu}{q} \int_{\mathbb R^N} f(x) |u_{0} +  r U_{\epsilon, \eta}|^{q}dx - \frac{1}{p} \int_{\mathbb R^N} |x|^{-pb} |u_{0}+rU_{\epsilon, \eta}|^{p}dx.
 \end{align*}
 Using the fact that $u_{0}$ is a solution of problem \eqref{Pc}, we get
 \begin{align*}
 J_{\lambda, \mu}(u_{0} + r\;U_{\epsilon,\eta})&\leq  J_{\lambda, \mu}(u_0) + \frac{r^2}{2}\left(\alpha\|U_{\epsilon, \eta}\|^2 -\lambda\int_{\mathbb R^N}h(x)|x|^{-2(1+a)}|U_{\epsilon,\eta}|^2 dx\right)\\&\quad + \frac{\beta}{4}r^{4}\|U_{\epsilon, \eta}\|^{4} + \beta\;r^{2}\| u_{0}\|^{2}\|U_{\epsilon, \eta}\|^2 +\frac{\beta}{2}r^{2}\| u_{0}\|^{2}\|U_{\epsilon, \eta}\|^2
 + \beta\;r^{3} \|U_{\epsilon, \eta}\|^{3}\| u_{0}\|\\& \quad -\frac{\mu}{q} \left(\int_{\mathbb R^N} f(x)( |u_{0} + r\;U_{\epsilon,\eta}|^{q}-|u_0|^q-qr|u_{0}|^{q-1} U_{\epsilon,\eta} )dx\right)
 \\&\quad - \frac{1}{p}\left(\int_{\mathbb R^N}|x|^{-pb}(|u_0+rU_{\epsilon,\eta}|^{p}-|u_0|^{p}-{p}ru_0^{p-1} U_{\epsilon,\eta})dz\right).
 \end{align*}
 Now, we estimate the sublinear term in the above inequality as follows:
 \begin{align*}
\frac{\mu}{q}\int_{\mathbb R^N} f(x)( |u_{0} &+ r\;U_{\epsilon,\eta}|^{q}-|u_0|^q-qr|u_{0}|^{q-1} U_{\epsilon,\eta} )dx={\mu}\int_{\mathbb R^N} f(x)\left(\displaystyle \int_0^{r\;U_{\epsilon,\eta}} (|u_{0} + s|^{q-1}-|u_0|^{q-1}) ds\right)dx>0.
\end{align*}
Also, using the inequality
 \[
 (m+n)^s-m^s-n^s-sm^{s-1}n\geq C_1m n^{s-1},\;\;\text{for all}\; (m, n)\in [0, \infty)\times [0, \infty)\;\textrm{and}\;s\geq 2\,,
 \]
 for some $C_1\geq0$, we estimate the critical $p$-term as follows:
\begin{align*}
\int_{\mathbb R^N} |x|^{-pb}|u_0+rU_{\epsilon,\eta}|^{p}dx&-\int_{\mathbb R^N} |x|^{-pb}|u_0|^{p}dx-pr\int_{\mathbb R^N} |x|^{-pb}|u_0|^{p-1}U_{\epsilon, \eta}dx\\&\geq r^p\int_{\mathbb R^N} |x|^{-pb} |U_{\epsilon,\eta}|^{p}dx+C_1r^{p-1}\int_{\mathbb R^N}|x|^{-pb} u_0|U_{\epsilon,\eta}|^{p-1}dx.
\end{align*}
 Letting $\|u_0\|=R$ and using Young's inequality together with the above estimates, we get
\begin{align*}
 J_{\lambda,\mu}(u_{0} + r\;U_{\epsilon,\eta})&\leq  J_{\lambda, \mu}(u_0)+\frac{r^2}{2}\alpha\|U_{\epsilon, \eta}\|^2+\frac{\beta}{4}r^{4}\|U_{\epsilon, \eta}\|^{4}+\frac{3\beta}{2}r^{2}R^{2}\|U_{\epsilon, \eta}\|^2\\&+ \beta R\;r^{3} \|U_{\epsilon,
 \eta}\|^{3}-\frac{1}{p}r^{p}\int_{\mathbb R^N}|x|^{-pb}|U_{\epsilon,\eta}|^{p}dx-C r^{p-1}\int_{\mathbb R^N} |x|^{-pb} u_{0} |U_{\epsilon,\eta}|^{p-1}dx.\\
 &\leq J_{\lambda, \mu}(u_0)+\frac{r^2}{2}\alpha\|U_{\epsilon, \eta}\|^2 +\frac{7\beta}{4}r^{4}\|U_{\epsilon, \eta}\|^{4}-\frac{1}{p}r^{p}\int_{\mathbb R^N}|x|^{-pb}|U_{\epsilon,\eta}|^{p}dx
 \\&-C r^{p-1}\int_{\mathbb R^N} |x|^{-pb} u_{0} |U_{\epsilon,\eta}|^{p-1}dx+\frac{7\beta}{2}R^4.
 \end{align*}
 	{Next, we denote, keeping in mind that $u_0\in C^{1, \gamma}_{\mathrm {loc}}(\mathbb R^N\setminus\{0\})$ by standard elliptic regularity theory, (see for more regularity results of this class of local problems in \cite{GBLI})}
 \begin{align*}
 g(t)&=\frac{t^2}{2}\alpha\|U_{\epsilon, \eta}\|^2+\frac{7\beta}{4}t^{4}\|U_{\epsilon, \eta}\|^{4}-
 {C t^{p-1}\displaystyle \int_{\mathbb R^N} |U_{\epsilon,\eta}|^{p-1}dx}-\frac{1}{p}t^{p}\displaystyle \int_{\mathbb R^N}|x|^{-pb}|U_{\epsilon,\eta}|^{p}dx.
 \end{align*}
 We claim the following \\
 \textbf{Claim:} There exists $t_\epsilon>0$ and $t_1, t_2>0$ (independent of $U_{\epsilon, \eta}$) such that
 \begin{align*}
 g(t_\epsilon )=\displaystyle \sup_{t\geq 0}g(t)\;\textrm{and}\; \frac{d}{dt}g(t)\mid_{t=t_\epsilon}=0
 \end{align*}
 and  $0<t_1\leq t_\epsilon\leq t_2<\infty$.\\
  Since $\displaystyle \lim_{t\rightarrow \infty} g(t)=-\infty$ and $\displaystyle \lim_{t\rightarrow 0^+} g(t)>0$, 
 there exists $t_\epsilon>0$ such that
 \begin{align}\label{mm}
 g(t_\epsilon )=\displaystyle \sup_{t\geq 0}g(t)\;\textrm{and}\; \frac{d}{dt}g(t)\mid_{t=t_\epsilon}=0.
 \end{align}
 From \eqref{mm} we get 
 \begin{align}\label{der0}
 t\alpha\|U_{\epsilon, \eta}\|^2 +{7\beta}t^{3}\|U_{\epsilon, \eta}\|^{4}&=
 {C t^{p-2}\displaystyle \int_{\mathbb R^N} |U_{\epsilon,\eta}|^{p-1}dx}+t^{p-1}\displaystyle \int_{\mathbb R^N}|x|^{-pb}|U_{\epsilon,\eta}|^{p}dx
 \end{align}
 and
 \begin{align}\label{ddn}
 \alpha\|U_{\epsilon, \eta}\|^2&<
 {C t^{p-3}\displaystyle \int_{\mathbb R^N} |U_{\epsilon,\eta}|^{p-1}dx}+C_1t^{p-2}\displaystyle \int_{\mathbb R^N}|x|^{-pb}|U_{\epsilon,\eta}|^{p}dx-{C_3\beta}t^{2}\|U_{\epsilon, \eta}\|^{4}.
\end{align}
 From \eqref{ddn}, it is clear that $t_\epsilon$ is bounded below that is there exist constants $t_1>0$, independent of $U_{\epsilon, \eta}$ such that $0<t_1\leq t_\epsilon$. Also, from \eqref{der0}, we have
 \begin{align*}
 \frac{1}{t^2}\alpha\|U_{\epsilon, \eta}\|^2 +{7\beta}\|U_{\epsilon, \eta}\|^{4}&=
 {C t^{p-5}\displaystyle \int_{\mathbb R^N} |U_{\epsilon,\eta}|^{p-1}dx}+t^{p-4}\displaystyle \int_{\mathbb R^N}|x|^{-pb}|U_{\epsilon,\eta}|^{p}dx.
 \end{align*}
 And, since $p>4$, there exists $t_2>0$, independent of $U_{\epsilon,\eta}$ such that $t_\epsilon\leq t_2<\infty$. This proves the above {claim}.\qed\\

Now taking $\beta=\varepsilon$ and using the above  estimates together with $J_{\lambda, \mu}(u_0)<0$ we get
 \begin{align*}
 J_{\lambda,\mu}(u_{0} + r\;U_{\epsilon,\eta})&\leq  J_{\lambda, \mu}(u_0)+\displaystyle \sup_{t\geq 0}g(t)+\frac{7\varepsilon}{2}R^4\\
 &\leq \sup_{t\geq 0} \displaystyle\left(t^2\alpha\|U_{\epsilon, \eta}\|^2
 -\frac{1}{p}t^{p}\int_{\mathbb R^N}|x|^{-pb}|U_{\epsilon, \eta}|^{p}dx\right) +\frac{5\varepsilon}{4}t_2^{4}\|U_{\epsilon, \eta}\|^{4}-{C t_1^{p-1}\int_{\mathbb R^N} |U_{\epsilon,\eta}|^{p-1}dx}+\frac{7\varepsilon}{2}R^4\\
 &\leq \left(\frac{1}{2}-\frac{1}{p}\right)(\alpha S)^\frac{p}{p-2}+C_2{\epsilon}+O(\epsilon^\frac{2}{p-2})+O(\epsilon^\frac{p}{p-2})-C\epsilon^\frac{1}{p-2}\\
 &\leq \left(\frac{1}{2}-\frac{1}{p}\right)(\alpha S)^\frac{p}{p-2}+C_3 \epsilon^\frac{2}{p-2}-C\epsilon^\frac{1}{p-2}
  \end{align*}
where $C_3, C>0$ are positive constants independent of $\epsilon, \mu$. Next choose $\varepsilon_0>0$ sufficiently small such that, for $\epsilon \in (0, \epsilon_0)$,
\[
C_3 \epsilon^\frac{2}{p-2}-C\epsilon^\frac{1}{p-2}<0.
\]
{Then, there exists some $\mu_5>0$ such that for  $\mu\in (0, \mu_5)$, we have 
$$
C_3 \epsilon^\frac{2}{p-2}-C\epsilon^\frac{1}{p-2}\leq -\mu^\frac{2}{2-q}\left(\frac{(4-q)\|f\|_o S^\frac{-q}{2}}{4q}\right)^\frac{2}{2-q} \left(\frac{2-q}{2}\right)\left(\frac{2q}{\alpha\delta(\lambda)}\right)^\frac{q}{2-q}
$$
and hence  $ J_{\lambda, \mu}(u_{0} + r\;U_{\epsilon,\eta}) \leq c_{\lambda, \mu}$ for $\epsilon \in (0, \epsilon_0)$ and $\mu\in (0, \mu_5)$. This proves the Proposition.}
\end{proof}
Consider the following
\begin{align*}
  U_{1} &= \left\{u\in D^{1, 2}_a(\mathbb R^N)\setminus\{0\}\;\; \big{|} \frac{1}{\|u\|}t^{-}\left(\frac{u}{\|u\|}\right) > 1\right\} \cup \{0\}, \\
  U_{2} &= \left\{u \in D^{1, 2}_a(\mathbb R^N)\{0\}\;\;\big{|} \frac{1}{\|u\|}t^{-}\left(\frac{u}{\|u\|}\right) < 1\right\}.
\end{align*}
Then $N_{\lambda, \mu}^{-}$ disconnects $ D^{1, 2}_a(\mathbb R^N)$ in two connected components $U_{1}$ and $U_{2}$
and $ D^{1, 2}_a(\mathbb R^N)\setminus N_{\lambda, \mu}^{-} = U_{1} \cup U_{2}.$ For each $u \in  N_{\lambda, \mu}^{+},$ we have $1< t_{\max}(u) < t^{-}(u).$
 Since $t^{-}(u) = \frac{1}{\|u\|}t^{-}\left(\frac{u}{\|u\|}\right),$ then $N_{\lambda, \mu}^{+} \subset U_{1}.$
In particular, $u_0 \in U_{1}.$  Now we prove the following lemma
\begin{lem}\label{intersectnlam}
There exists $l_0>0$ such that $u_{0} + l_{0}U_{\epsilon, \eta} \in U_{2}$.
\end{lem}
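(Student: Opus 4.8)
The plan is to show that for $l_0$ large enough the element $u_0 + l_0 U_{\epsilon,\eta}$ lies in $U_2$, i.e. that $\frac{1}{\|u_0+l_0U_{\epsilon,\eta}\|}\,t^-\!\left(\frac{u_0+l_0U_{\epsilon,\eta}}{\|u_0+l_0U_{\epsilon,\eta}\|}\right) < 1$, which is equivalent to $t^-(u_0+l_0U_{\epsilon,\eta}) < 1$ in the normalization used above (recall $t^-(v) = \frac{1}{\|v\|}t^-(v/\|v\|)$ by $0$-homogeneity of the fibering construction). Equivalently, writing $w_l := u_0 + l U_{\epsilon,\eta}$, I must produce $l_0$ so that $w_{l_0}\in U_2$. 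Since $U_{\epsilon,\eta}\in F^+$ (its support sits inside $f^+$ where $f>0$, so $\int f|U_{\epsilon,\eta}|^q dx>0$) and $u_0\ge 0$ with $u_0\in F^+$, one checks $w_l\in F^+$ for all $l\ge 0$; hence \textbf{Case 1} of the fibering analysis applies to each $w_l$ and $t^-(w_l)$ is well defined, being the larger zero of $\psi_{w_l}(t)=\mu\int f|w_l|^q dx$ with $t^-(w_l)>t_{\max}(w_l)$ and $\psi'_{w_l}(t^-(w_l))<0$.

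The key step is an asymptotic estimate as $l\to\infty$. First I would show $t^-(w_l)\to 0$, or at least $t^-(w_l)<1$ eventually, by examining the scaling of the terms. Write $w_l = l\big(\tfrac{u_0}{l} + U_{\epsilon,\eta}\big)$; as $l\to\infty$, $\tfrac{u_0}{l}+U_{\epsilon,\eta}\to U_{\epsilon,\eta}$ strongly in $D^{1,2}_a(\mathbb R^N)$, and all the integral functionals appearing in $\psi$ are continuous (the sublinear one by Proposition~\ref{comunbounded}, the critical and quadratic ones by continuity of the respective norms on $D^{1,2}_a(\mathbb R^N)$ and the $L^p_b$ continuity). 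By $0$-homogeneity, $t^-(w_l) = \tfrac{1}{l}\, t^-\!\big(\tfrac{u_0}{l}+U_{\epsilon,\eta}\big)$, and since $t^-$ is continuous on $F^+$ (it depends continuously on $v$ because $\psi_v$ and $\mu\int f|v|^q$ vary continuously and the larger root is isolated and transversal, $\psi'_v(t^-(v))<0$), we get $t^-\!\big(\tfrac{u_0}{l}+U_{\epsilon,\eta}\big)\to t^-(U_{\epsilon,\eta})$, a finite positive number independent of $l$. Therefore $t^-(w_l)=\tfrac{1}{l}\big(t^-(U_{\epsilon,\eta})+o(1)\big)\to 0$, and in particular $t^-(w_l)<1$ for all $l\ge l_0$ with $l_0$ sufficiently large. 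This puts $w_{l_0}\in U_2$, which is the claim.

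The main obstacle I anticipate is justifying the continuity of the map $v\mapsto t^-(v)$ on $F^+$ rigorously — in particular, that the ``large'' root $t^-(v)$ does not jump, merge with $t_{\max}(v)$, or escape to $\infty$ under the perturbation $v=\tfrac{u_0}{l}+U_{\epsilon,\eta}$. This is handled by the structural facts established in \textbf{Case 1}: $\psi_v$ has a unique interior maximum at $t_{\max}(v)$, is strictly decreasing on $(t_{\max}(v),\infty)$ with $\psi_v(t)\to-\infty$, so the equation $\psi_v(t)=\mu\int f|v|^q dx$ has exactly one root beyond $t_{\max}(v)$, and at that root $\psi'_v<0$; the implicit function theorem then gives local continuity (indeed $C^1$ dependence) of $t^-(v)$ in $v$, and since $\tfrac{u_0}{l}+U_{\epsilon,\eta}\to U_{\epsilon,\eta}$ stays in a compact set of $F^+$ for $l$ large, the convergence $t^-\!\big(\tfrac{u_0}{l}+U_{\epsilon,\eta}\big)\to t^-(U_{\epsilon,\eta})$ follows. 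The remaining bookkeeping — verifying $w_l\in F^+$ and that $\|w_l\|\to\infty$ so the normalized point is well defined — is routine.
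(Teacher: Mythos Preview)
Your argument is correct and takes a genuinely different route from the paper. You exploit the degree $-1$ homogeneity $t^-(lv)=\tfrac{1}{l}t^-(v)$ together with the continuity of $v\mapsto t^-(v)$ on $F^+$ (justified via the implicit function theorem, since $\psi_v'(t^-(v))<0$) to conclude directly that $t^-(u_0+lU_{\epsilon,\eta})=\tfrac{1}{l}\,t^-\!\big(\tfrac{u_0}{l}+U_{\epsilon,\eta}\big)\to 0$, hence is eventually below $1$. The paper instead argues by contradiction: if $t^-$ of the normalized vectors $v_n=(u_0+l_nU_{\epsilon,\eta})/\|u_0+l_nU_{\epsilon,\eta}\|$ were unbounded along some sequence $l_n\to\infty$, then by dominated convergence $\int |x|^{-pb}|v_n|^p\,dx\to \|U_{\epsilon,\eta}\|^{-p}\int |x|^{-pb}|U_{\epsilon,\eta}|^p\,dx>0$, which forces $J_{\lambda,\mu}(t^-(v_n)v_n)\to-\infty$ and contradicts Lemma~\ref{cbbi}; having the uniform bound $t^-(v_n)<c$, the paper then picks an explicit $l_0$ so that $\|u_0+l_0U_{\epsilon,\eta}\|>c$. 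Your approach yields the sharper information $t^-(w_l)\to 0$ and avoids invoking the coercivity/boundedness-below lemma, at the cost of checking continuity of $t^-$; the paper's approach is more elementary in that it relies only on already-established global properties of $J_{\lambda,\mu}$ on the Nehari manifold. (A minor terminological remark: what you call ``$0$-homogeneity'' is really degree $-1$ homogeneity of $t^-$, but your computation with it is correct.)
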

\begin{proof}
First, we find a constant $c>0$ such that $0 < t^{-} \left(\frac{u_{0}+l\;U_{\epsilon,\eta}}{\|u_{0}+l\;U_{\epsilon,\eta}\|}\right)<c.$ Otherwise, there exists a sequence $\{l_{n}\}$ such that $l_{n} \rightarrow \infty$ and $t^{-}\left(\frac{u_{0}+l_{n}\;U_{\epsilon,\eta}}{\|u_{0}+l_{n}\;U_{\epsilon,\eta}\|}\right) \rightarrow \infty$ as $n \rightarrow\infty.$ Let $v_{n} = \frac{u_{0}+l_{n}\;U_{\epsilon,\eta}}{\|u_{0}+l_{n}\;U_{\epsilon,\eta}\|}.$ Since $t^{-}(v_{n})v_{n} \in N_{\lambda, \mu}^{-} \subset N_{\lambda, \mu}$ and by the Lebesgue dominated convergence theorem,
\begin{align*}
 \int_{\mathbb R^N}|x|^{-pb}|v_n|^p dx &= \frac{1}{\|u_{0}+l_{n}\;U_{\epsilon,\eta}\|^{p}}\int_{\mathbb R^N} |x|^{-pb}|u_{0}+l_{n}\;U_{\epsilon,\eta}|^{p}dx \\
   &= \frac{1}{\|\frac{u_{0}}{l_{n}}+\;U_{\epsilon,\eta}\|^{p}}\int_{\mathbb R^N} |x|^{-pb}\left|\frac{u_{0}}{l_{n}}+U_{\epsilon,\eta}\right|^{p} dx\rightarrow \frac{\displaystyle \int_{\mathbb R^N} |x|^{-pb}|U_{\epsilon, \eta}|^{p}dx}{\|U_{\epsilon, \eta}\|^{p}}\;\; \textrm{as}\;\; n \rightarrow \infty.
\end{align*}
Hence
\begin{align*}
  {J}_{\lambda, \mu}(t^{-}(v_{n})v_{n}) &= \frac{1}{2}(t^{-}(v_n))^2\left(\alpha\|v_n\|^2-\lambda \int_{\mathbb R^N}h(x)|x|^{-2(1+a)}v_n^2 dx\right)+\frac{1}{4}\beta(t^{-}(v_n))^4\|v_n\|^4\\&-\frac{(t^{-}(v_{n}))^{q}}{q}\mu\int_{\mathbb R^N}f(x) |v_{n}|^q\;dx - \frac{(t^{-}(v_{n}))^{p}}{p}\int_{\mathbb R^N} |x|^{-pb}|v_{n}|^{p}dx \rightarrow - \infty\;\; \textrm{as}\;\; n \rightarrow \infty.
\end{align*}
This contradicts that $J_{\lambda, \mu}$ is bounded below on $N_{\lambda, \mu}.$ Let
\begin{equation*}
    l_{0} = \frac{|c^{2}-\|u_{0}\|^{2}|^{\frac{1}{2}}}{\|U_{\epsilon,\eta}\|} + 1,
\end{equation*}
then
\begin{align*}
  \|u_{0}+l_{0}U_{\epsilon,\eta}\|^{2} &= \|u_{0}\|^{2} + (l_{0})^{2}\|U_{\epsilon,\eta}\|^{2}+2l_{0}\langle u_{0}, U_{\epsilon,\eta}\rangle >\|u_{0}\|^{2} + |c^{2}-\|u_{0}\|^{2}| + 2l_{0}\langle u_{0}, U_{\epsilon,\eta}\rangle\\
   &>c^{2}>\left(t^{-}\left(\frac{u_{0}+l_{0}U_{\epsilon,\eta}}{\|u_{0}+l_{0}U_{\epsilon,\eta}\|}\right)\right)^{2}
\end{align*}
that is $u_{0}+l_{0}U_{\epsilon,\eta} \in U_{2}.$
\end{proof}

\noindent \textbf{Proof of Theorem \ref{22mht1}(ii):}
 \noindent {Let us set
 $\mu_{00}=\displaystyle \min\{\mu_1, \mu_2, \mu_3, \mu_5\}$, where $\mu_1, \mu_2$ are defined in \eqref{muone}, \eqref{mutwo} and $\mu_3$ , $\mu_5$ are defined in Proposition \ref{prp1} and Lemma \ref{II} respectively. }  Then using  Lemma \ref{intersectnlam} and the fact that $u_0\in N_{\lambda, \mu}^+\subset U_1$, one can define a continuous path $\gamma(t) = u_0+t\;l_{0}U_{\epsilon,\eta}$ connecting $u_0$ and $u_0+l_{0}U_{\epsilon,\eta}$ from $U_1$ to $U_2$. Then there exists $t_{0} \in (0,1)$
  such that $\gamma_{0}(t_{0})=u_0+t_0\;l_{0}U_{\epsilon,\eta} \in N_{\lambda, \mu}^{-}$ as $N_{\lambda, \mu}^{-}$ disconnects $U_1$ and $U_2$. Therefore, by Lemma \ref{II},
    $\theta_{\lambda, \mu}^{-} \leq J_{\lambda,\mu}(u_0+t_0\;l_{0}U_{\epsilon,\eta})< c_{\lambda, \mu}$ {for $0<\mu<\mu_{00}$.}
 Now from Proposition \ref{prp1} (2), there exists a bounded  minimizing  Palais-Smale sequence
$\{u_{k}\}$ for $J_{\lambda, \mu}$ in $ N_{\lambda, \mu}^{-}$. Since
$\theta_{\lambda, \mu}^{-} <c_{\lambda, \mu}$ by Proposition \ref{crcmpi}, there exists a subsequence $\{u_k\}$ and $u_1$ in  $D^{1, 2}_a(\mathbb R^N)$ such that $u_{k} \rightarrow u_1$ strongly in $ D^{1, 2}_a(\mathbb R^N).$ Now using Corollary \ref{nlclosedi}, $u_1 \in N_{\lambda, \mu}^{-}$ and $J_{\lambda, \mu}(u_{k}) \rightarrow J_{\lambda, \mu}(u_1) = \theta_{\lambda, \mu}^{-}\; \textrm{as}\; k \rightarrow \infty.$ Therefore $u_1$ is also a solution. Moreover, using a similar argument as in the case of first solution, one can show that $u_1$ is a positive solution of the problem $\eqref{Pc}$. Since $N_{\lambda, \mu} ^+\cap N_{\lambda, \mu}^-=\emptyset$, $u_0$ and $u_1$ are distinct. This proves Theorem \ref{22mht1}.
\section*{Acknowledgement} 
\noindent Research supported in part by INCTmat/MCT/Brazil, CNPq and CAPES/Brazil

\medskip

 \section*{References}

\end{document}